\newlength{\dhatheight}
\newcommand{\norm}[1]{\left\lVert#1\right\rVert}
\newtheorem{Lemma}{Lemma}[section]
\newtheorem{definition}[Lemma]{Definition}
\newtheorem{theorem}[Lemma]{Theorem}
\newtheorem{prop}[Lemma]{Proposition}
\newtheorem{remark}[Lemma]{Remark}
\newtheorem{assumption}[Lemma]{Assumption}
\newtheorem{example}[Lemma]{Example}
\theoremstyle{definition}
\date{}
\definecolor{darkblue}{rgb}{.1, 0.1,.8}
\definecolor{darkgreen}{rgb}{0,0.8,0.2}
\definecolor{darkred}{rgb}{.8, .1,.1}
\newcommand*{\E}{\mathbb{E}}
\newcommand*{\Var}{\mathbb{V}ar}
\newcommand*{\N}{\mathbb{N}}
\newcommand*{\R}{\mathbb{R}}
\renewcommand{\P }{{\mathbb P}}
\newcommand{\ve}{\varepsilon}
\newcommand{\indvec}{(\hat{\mathbbm{1}}_1 (X) , \ldots, \hat{ \mathbbm{1}}_p(X))^\top}
\newcommand{\indvecp}{(\hat{\mathbbm{1}}_1 (X') , \ldots, \hat{ \mathbbm{1}}_p(X'))^\top}
\newcommand{\hatEDP}{\hat {\mathcal E}^{\text{DP}}}
\newcommand{\PR}{\mathbb{P}}
\DeclareMathOperator*{\argmax}{arg\,max}
\begin{document}

\begin{frontmatter}
\title{Differentially private testing for relevant dependencies in high dimensions 
}
\runtitle{Relevant dependencies in high dimensions under privacy}
\begin{aug}

\author[A]{\fnms{Patrick}~\snm{Bastian}\thanksref{t1}}
\author[B]{\fnms{Holger}~\snm{Dette}\thanksref{t2}}
\author[B]{\fnms{Martin}~\snm{Dunsche}\thanksref{t3}}
\thankstext{t1}{E-mail: patrick.bastian@rub.de}
\thankstext{t2}{E-mail: holger.dette@rub.de}
\thankstext{t3}{E-mail: martin.dunsche@rub.de}
\thankstext{eq}{Authors contributed equally and are listed alphabetically.}
\address[A]{Department of Mathematics, Aarhus University}
\address[B]{Fakultät für Mathematik, Ruhr-University Bochum}

\begin{abstract}

\noindent We investigate the problem of detecting dependencies between the components of a high-dimensional vector. Our approach advances the existing literature in two important respects.  First, we consider the problem  under privacy constraints. Second, instead of testing whether the coordinates are pairwise independent, we are interested in determining whether certain pairwise associations between the components (such as  all pairwise Kendall's $\tau$ coefficients) do not exceed a given threshold in absolute value. Considering hypotheses of this form is motivated by the observation that in the high-dimensional regime, it is rare and perhaps impossible to have a null hypothesis that can be modeled exactly by assuming that all pairwise associations are precisely equal to zero.\\
 The formulation of the null hypothesis as a composite hypothesis makes the problem of constructing tests already non-standard in the non-private setting. Additionally, under privacy constraints, state of the art procedures rely on permutation approaches that are rendered invalid under a composite null. We propose a novel bootstrap based methodology that is especially powerful in sparse settings, develop theoretical guarantees under mild assumptions and show that the proposed method enjoys good finite sample properties even in the high privacy regime. Additionally, we present applications in medical data that showcase the applicability of our methodology.

\end{abstract}
\end{aug}

\begin{keyword}
\kwd{U-statistics}
\kwd{Differential privacy}
\kwd{Relevant hypotheses}
\kwd{Dependence testing}
\kwd{high-dimensional inference}
\end{keyword}

\end{frontmatter}

\maketitle

\defaultbibliographystyle{apalike}
\defaultbibliography{literature}
\begin{bibunit}

\section{Introduction}\label{sec:intro}
  \def\theequation{1.\arabic{equation}}	
	\setcounter{equation}{0}
    
The ability to measure and detect statistical dependence lies at the heart of many scientific questions. From the early works of \cite{Pearson1920}; \cite{Kendall1938} to modern machine learning methods, independence testing has been applied for this purpose across many fields such as genetics, neuroscience, medicine  or economics. Classical approaches target low-dimensional settings where $p \ll n$ and generally do not perform well when $p$ is comparable to, or even larger than $n$. In the era of big data, attention has shifted toward this high-dimensional regime, making classical methods inadequate. Consequently, specialized procedures for such high-dimensional settings have been developed and for a 
comprehensive review of the state of the art we refer the reader to the review at the end of this section. 

However, to the best of our knowledge, two crucial aspects are almost entirely missing from the current literature on high-dimensional independence testing. The first one is  privacy. In many fields, including medical studies, behavioral research, and other contexts where sensitive personal information is involved, statistical analysis must reconcile fundamentally competing aims: drawing meaningful conclusions from data while protecting the privacy of individuals who contribute to it. Traditional anonymization or aggregation techniques are often insufficient, as subtle differences in released statistics can still reveal individual participation. Differential privacy (DP) \citep{dwork2006differential} addresses this challenge by providing a formal and quantifiable notion of privacy. It ensures that the inclusion or exclusion of a single observation has only a limited effect on the output, thereby bounding the information that can be inferred about any individual. This framework has become the de-facto standard in privacy-preserving data analysis, offering a principled foundation for statistical inference under privacy constraints. The second is more of statistical nature. Most existing methods aim for detecting arbitrarily  small dependencies between the components of high-dimensional vectors and are based on tests for hypotheses 
\begin{equation}\label{eq_classical_hypotheses}
\begin{split}
     &H_0:\theta_{ij}=0 \quad \text{for all } 1\leq i<j\leq d\\
     &H_1:\theta_{ij} \not = 0    \quad   \text{for at least one pair }   (i,j) \text{ with }   1 \leq i < j \leq d \,,
\end{split}
\end{equation}
where $\theta_{ij}$ is some (population) measure of dependence between $i$th and $j$th component (such as the covariance or Kendall's $\tau$) of a $d$-dimensional vector. However, in the big data era, where the dimension $d$ and the sample size is large, hypotheses of this form are usually not of interest in practice. In the high-dimensional regime it is very uncommon that all pairwise components of a vector are independent. Consequently, using a consistent test with a sufficient amount of data will virtually always result in rejection of \eqref{eq_classical_hypotheses}. 
This point of view is in line with \cite{tukey1991}, who  succinctly stated it in the context of the comparison of multiple means: ``\textit{Statisticians classically asked the wrong question—and were willing to answer with a lie, one that was often a downright lie. They asked “Are the effects of A and B different?” and they were willing to answer “no.''}''

As pointed out by \cite{Berger} in the context of comparing two univariate means, a possible remedy for these issues is to consider the null hypothesis that all pairwise associations do not exceed a given threshold, say $\Delta>0$. More specifically, we will 
 consider the hypotheses
\begin{equation}\label{eq_relevant_hypotheses}
    \begin{split}
   &  H_0 (\Delta) : ~~  | \theta_{ij} | \leq \Delta   ~~ \text{ for all } 1 \leq i < j \leq d\,, \\
   & H_1 (\Delta):  ~~ | \theta_{ij} | > \Delta   ~~   \text{ for at least one pair }   (i,j) \text{ with }   1 \leq i < j \leq d\,. 
    \end{split}
\end{equation} 
Thus we are interested in testing for at least one {\it practically significant} association, which is larger (in absolute value) than the threshold $\Delta $, and we call the hypotheses in \eqref{eq_relevant_hypotheses} {\it relevant}  
hypotheses throughout this paper.
For a more thorough discussion of relevant hypotheses we refer the reader to Section \ref{subsec:rel_hyp}.  
To illustrate our point of view  here  more concretely, we refer to an  example of gene expression networks discussed in \cite{Tsaparas2006}, where a $28$-dimensional gene expression network is evaluated based on  Pearson correlation coefficients to determine  co-expressed genes. For a meaningful analysis it is necessary to prevent congelation of the network due to spurious correlations, corresponding to a threshold $\Delta$ that demarcates spurious and meaningful correlations (they choose $\Delta=0.7$ even in this fairly low dimensional setting). We discuss this example in more detail in Section \ref{sec:4}.
\smallskip

In the high-dimensional setting, either of these two aspects alone already poses considerable challenges for the development of statistical methodology (see the discussion of the related literature below). In this paper, we develop methodology that addresses both aspects. To this  end, we design a framework for  testing for dependencies in high-dimensional vectors under DP which accommodates a broad class of commonly used dependence measures and is therefore applicable to a wide range of settings. One particularly challenging problem of this endeavor will be the private estimation of so called {\it ``extremal sets''} that plays  a crucial role when constructing powerful tests for hypotheses of the form \eqref{eq_relevant_hypotheses}. As we will see in Section \ref{sec:highdim}, existing approaches to related problems either perform poorly in finite samples or are not practically feasible. The main contributions of the present paper are the following: 

\begin{itemize}
    \item[(1)] We introduce a practical framework for testing for relevant dependencies between the components of a high-dimensional vector that achieves good finite-sample performance combined with strong privacy guarantees.
    \item[(2)] We establish rigorous statistical guarantees for our approach that enable further generalizations and extensions.
    \item[(3)] We demonstrate the versatility of the developed method on two high-dimensional data sets, illustrating that meaningful inference under privacy constraints is still possible in extremely high-dimensional settings.
    \item[(4)]  The new procedure preserves privacy and additionally  attains finite-sample performance that is at least comparable - if not better than that of existing non-private approaches. As the primary focus of this article is private inference we demonstrate this superiority in Section \ref{appadditionalsim} of the online supplement. 
    \item[(5)] We release an open-source implementation. \footnote{\url{https://github.com/martindunsche/Highdimensional_U_statistics_under_privacy}}
\end{itemize}

\textbf{Related work}
High-dimensional (in-)dependence testing is by now a fairly mature field with a substantial amount of publications. For Gaussian data, we refer, among many others,  to \cite{Jiang2015} and  \cite{boddetpar2019} who investigated the asymptotic properties of  likelihood ratio tests. For more general distributions, dependence is usually quantified by different correlation measures such as Pearson's $r$, Spearman's $\rho$, and Kendall's $\tau$, and different functions are used to aggregate these estimates of the pairwise dependencies. For example, \cite{Bao2015} and \cite{Li2021} use linear spectral statistics of the matrix of the estimates of the pairwise dependencies,
while, among others,  \cite{yaoetal2017} and \cite{Leung2018} propose tests based on the Frobenius norm. Other very popular methods of aggregating estimates of the pairwise dependencies are maximum-type tests, which have good power properties
against sparse alternatives and have been investigated for various
covariance/correlation statistics, see \cite{hanetal2017}, \cite{drttonetal2020}  
  and  \cite{heetal2021} for some recent work and the references therein.
A common aspect of this literature is  that  the authors consider the hypotheses in \eqref{eq_classical_hypotheses} of exact pairwise independence. Their  methodology is based on the asymptotic distribution of a test statistic under the hypothesis of independence and can therefore not be extended to testing relevant hypotheses of the form \eqref{eq_relevant_hypotheses}. This problem has been addressed by  \cite{patrick_annals}, but, to our best knowledge, there does not exist any work on differentially private inference in this context. We substantially deviate from the methodology in the aforementioned reference, by introducing so called {\it ``extremal sets''} and corresponding estimators.  Using this information we  implement a powerful parametric bootstrap test, which accommodates privacy constraints and exhibits superior performance compared to the test of \cite{patrick_annals}, even in many situations where private inference is not required. 

A substantial body of research has investigated differentially private inference procedures across a wide range of statistical problems 
 \citep[see, for example,][]{pmlr-v54-rogers17a,  sei2021privacy}.
 Most of this literature has focused on testing tasks for specific parameters, such as sample means, rather than on methods that are broadly applicable to general classes of hypothesis testing problems. \cite{chaudhuri2024differentially} studied $U$-statistics  to enhance private estimation and, in turn, inference in finite dimensional settings. In a closer vein to the present paper \cite{liu2025differentially} analyzed $U$-statistics with an emphasis on independence testing, making use of classical permutation tests. Unfortunately this approach is not applicable in the relevant hypothesis framework as the necessary permutation invariances are not valid under the null hypothesis \eqref{eq_relevant_hypotheses}, which we will consider. 

The majority of work on DP hypothesis testing does not consider the high-dimensional regime and relies on parametric bootstrap approximations of the (quantiles of the) relevant statistic, or by direct analysis of the noise introduced for privacy protection  \citep[see][for an overview]{dunsche2022multivariate}. 
On the other hand, \cite{liu2022differential} proposed a general framework for DP estimation in high dimensions. While obtaining strong theoretical results, their methodology does not yield inferential guarantees, which can be used for hypotheses testing,  and also may not be computationally feasible in many cases. Moreover, \cite{cai2024optimal} investigated  principal component analysis (PCA) in high-dimensional spiked covariance models, while \cite{canonne2020private}  and \cite{pmlr-v178-narayanan22a} developed a private framework for identity testing in high dimensions.

\section{Background}\label{sec:background}
  \def\theequation{2.\arabic{equation}}	
	\setcounter{equation}{0}

In this section, we briefly revisit key concepts for the subsequent development of our methodology. In particular we recall the specific notions of testing relevant hypotheses, $U$-statistics and Differential Privacy (DP).

\subsection{Relevant Hypotheses}\label{subsec:rel_hyp}
For a  $d$-dimensional vector  $X_1=(X_{11}, \ldots X_{1d})^\top $ let  $\theta_{ij} = \theta (X_{1i}, X_{1j})$ denote a dependence measure between the $i$th and $j$th component. 
We propose to investigate if all associations  $(\theta_{ij})_{1 \leq i<j\leq d}$ are in some sense ``small'' by testing the hypotheses 
\eqref{eq_relevant_hypotheses}, 
where $\Delta > 0 $ is a given threshold, which defines when a dependence between the components $i$ and $j$ is considered as (scientifically) not relevant. As pointed out in the introduction, the consideration of hypotheses of this form is motivated by the observation that in many applications it is very unlikely that all pairwise associations are completely $0$, in particular if the dimension  $d$ is large. We thus argue that it is more reasonable to test for at least one {\it practically significant} association. 

An essential ingredient in this approach is the specification of the    threshold $\Delta$, and its choice depends sensitively on the particular problem under consideration. 
Essentially, this boils down to the important question when a correlation (or another dependence measure) is {\it practically significant}, which has a long history in applied statistics. For the particular case of dependence measures that we consider in the present paper several authors classify the strength of association between variables for their particular application into categories such as ``small'',  ``medium'' or ``large''. The precise demarcation thresholds vary across disciplines and subject areas and we refer the interested reader to  \cite{Quintana2016}, \cite{Brydges2019} and \cite{Lovakov2021} 
for a discussion of this choice for concrete applications. In this paper, we will consider two data examples, one from genomics and one from cancer research. For the genomic data and in particular gene expression networks it is customary to discard correlations below a certain threshold to facilitate a meaningful analysis. In this context \cite{Tsaparas2006} proposed $0.7$ as a threshold for Pearson correlations and considered correlations below $0.7$ as spurious. For the cancer data on the other hand, we will apply our methodology to two distinct groups within the data set - patients with cancer and those without - and then use the smallest $\Delta$ for which the null hypothesis in \eqref{eq_relevant_hypotheses} is rejected as a means of structurally discriminating between the two groups. We thus obtain a natural, data dependent choice for the threshold $\Delta$. In the following remark, we make this argument more precise and explain why this yields a valid inference procedure.

\begin{remark}\label{rem:measure_of_evidence}
 {\rm Note that the hypotheses $H_0(\Delta)$ in \eqref{eq_relevant_hypotheses} are nested and that families of test decisions $\phi(\Delta)$ for such hypotheses are often monotone in $\Delta$. Consequently, rejecting $H_0(\Delta)$ for $\Delta=\Delta_1>0$ also implies rejecting $H_0(\Delta)$ for all $0<\Delta<\Delta_1$. The sequential rejection principle then allows us to  simultaneously test the hypotheses \eqref{eq_relevant_hypotheses} for different choices of $\Delta> 0$ until we find the minimum value $\hat \Delta_\alpha$  for which $H_0(\hat \Delta_\alpha)$ is not rejected, that is  
\begin{align}
       \hat \Delta_\alpha:=\min \big \{\Delta \,| \, \phi(\Delta)=0  \big  \}~.
\end{align}
where we define the minimum of an empty set to be 0. Consequently, one may postpone the selection of $\Delta$ and derives a test decision in the fashion as comparing the $p$-value to the prescribed type I error.
    }
\end{remark}

\subsection{U-statistics}\label{subsec:U_statistic}

We will phrase the testing problem \eqref{eq_relevant_hypotheses} in the framework of $U$-statistics as many dependence measures used in practice can be expressed in this way. To be precise, let $X_1,\ldots ,X_n$ denote independent identically distributed $d$-dimensional random vectors with distribution function $F$. Note that formally $F$ depends on the dimension $d$, which in this paper we allow to vary with $n$, but we will not reflect this dependence in our notation throughout this paper. For some positive integer $r$ let 
\begin{align} \label{hd2}
h =(h_1, \ldots , h_p)^\top :\big ( \R^d\big)^r \rightarrow \R^p
\end{align}
denote a measurable symmetric function  with finite expectation 
\begin{align} \label{hd0}
\theta=(\theta_1,\ldots,\theta_p)^\top :=\E_F[h(X_1, \ldots ,X_r)] \in \mathbb{R}^{p }~,
\end{align}
 which defines our  parameter of interest.  We are interested in the relevant hypotheses
\begin{equation}\label{eq_rel_hyp_high}    
      H_0: ~~  \norm{\theta}_\infty \leq \Delta~, \quad H_1:  ~~ \norm{\theta}_\infty > \Delta
\end{equation} 
 for some $\Delta>0$, where $\| \cdot \|_\infty$ denotes the maximum-norm. 
 In the context of testing for pairwise dependencies, the dimension $p$ will typically be given by $p=d(d-1)/2$ as illustrated in the following example.

\begin{example} \label{ex1}
{\rm
For the dependence measure between the $i$th and $j$th 
component of the vector $X_{1}= (X_{11} , \ldots  , X_{1d} )^\top $
 as introduced in Section \ref{sec:intro}, we assume that
\begin{align*}
\theta_{ij} =   
\theta(X_{1i},X_{1j})
= \mathbb{E}[\tilde h ( X_{1i},X_{1j}, \ldots ,X_{ri},X_{rj})]~~ ~~~~ 1 \leq i < j \leq d~.
\end{align*}
Here, $\tilde h: (\R^{2})^r \to \mathbb{R}$ is a kernel of order $r$
evaluated at $( X_{1i},X_{1j}), \ldots, (X_{ri},X_{rj} )$. In this case the function $h:\R^{dr}\to \R^{p}$  in \eqref{hd2}  is  defined by
\begin{align*}
h (X_1,...,X_r)  & = {\rm vech} \big ( ( h_{ij} (X_1,...,X_r) )_{i,j=1, \ldots , d}  \big ) \\
& = {\rm vech} \big ( (\tilde h ( X_{1i},X_{1j},...,X_{ri},X_{rj}) )_{i,j=1, \ldots , d}  \big )~,
\end{align*}
where the second equality defines the functions $h_{ij} : \mathbb{R}^{dr} \to \mathbb{R}$ in an obvious manner and 
vech($\cdot$) is the operator that stacks the columns  above  the diagonal of a symmetric 
$ d \times d$ matrix as a vector with $p = d(d -1)/2$ components. Note that the index $(i,j)$ 
in the definition of the function $h_{ij}$
is only used to emphasize that each $h_{ij}$ acts on different components of the $d$-dimensional vectors  $X_{1}, \ldots ,X_{r}$. 
Similarly, the vector $\theta$ is defined by
$\theta  =  {\rm vech} \big ( ( \theta_{ij} )_{i,j=1, \ldots , d}  \big )$, and the components of the vector
$U= {\rm vech} \big ( ( U_{ij} )_{i,j=1, \ldots , d}  \big ) $ in \eqref{hd3} are given by
\begin{align*}
 U_{ij}  &= {n \choose r}^{-1}\sum_{1 \leq l_1<...<l_r \leq n}  h_{ij} ( X_{l_1},...,X_{l_r})\\
 &= {n \choose r}^{-1}\sum_{1 \leq l_1<...<l_r \leq n} \tilde h (X_{l_1i},X_{l_1j},...,X_{l_ri},X_{l_rj})\,.
\end{align*}
Finally, we note that it is easy to see that with these notations the hypotheses  \eqref{eq_rel_hyp_high} are equivalent to \eqref{eq_relevant_hypotheses}.
}
\end{example}

We now return to the general case discussed at the beginning of this section. In order to estimate the parameter $\theta$ we consider a $U$-statistic of order $r$ given by
\begin{align} \label{hd3}
    U= (U_1 , \ldots , U_p)^\top = {n \choose r}^{-1} \sum_{1 \leq l_1< \ldots <l_r \leq n}h(X_{l_1}, \ldots ,X_{l_r})\,.
\end{align}

As our primary goal is developing inferential methodology for the hypotheses \eqref{eq_relevant_hypotheses}, we will need estimates of the asymptotic covariance structure 
 $\zeta_1 =(\zeta_{1,ij})_{i,j=1, \ldots , p}$
of the vector $U$, where 
\begin{align}
    \label{eq_var_def}
    \zeta_{1,ij}:=\text{Cov}_F(h_{1,i}(X_1),h_{1,j}(X_1))
\end{align}
and 
\[
    h_{1}(x)=(h_{11}(x), \hdots, h_{1p}(x))^\top=\E_F[h(X_1,....,X_r)|X_1=x]
\]
is the linear part of the Hoeffding decomposition.
To estimate (a multiple)  of $\zeta_1 =(\zeta_{1,ij})_{i,j=1, \ldots , p}$, we utilize the classical Jackknife estimator
\begin{equation}\label{eq_jackknife}
    \hat\zeta_{1}:= (n-1) \sum_{l=1}^n (U^{(l)}-U)(U^{(l)}-U)^\top~,
\end{equation}
where $U^{(l)}$ denotes the leave one out $U$-statistic of \eqref{hd3}. In Lemma \ref{VarConv} of the online supplement we show that this yields a maximum-norm consistent estimator for $r \cdot \zeta_1$ even in the ultra high-dimensional setting where $p=o(\exp(n^{1/5}))$. 

\subsection{Differential Privacy}\label{subsec:differential_privacy}
We recall basic notions used throughout this paper and forward interested readers to \cite{dwork2014algorithmic} and   \cite{bun2016concentrated} for an overview of DP or Zero-Concentraded Differential Privacy (zCDP), respectively. 

We call two datasets $X$ and $X'$ neighbors (denoted $X \sim X'$ or $d_H(X,X')=1$, where $d_H$ is the Hamming distance) if they differ in exactly one individual. Given a possibly vector-valued query (statistic) $f$, its global sensitivity with respect to a norm $\|\cdot\|$ is defined as
\[
\Delta f \;:=\; \sup_{X \sim X'} \| f(X) - f(X') \|,
\]
and quantifies the largest change in $f$ when a single record in the data set $X$ is modified for any $X$. 
We define for two distributions $P$ and $Q$ and $\alpha>1$ their  Rényi-divergence by  
$$
D_\alpha(P,Q)=\frac{1}{\alpha-1}\log \Big( \int p(t)^\alpha q(t)^{1-\alpha} d\mu(t)\Big) ,
$$ 
where $p$ and $q$ are densities of $P$ and $Q$ with respect to some dominating measure $\mu$. 

\begin{definition}[Definition 8.1 in \cite{bun2016concentrated}, Approximate zCDP]
\label{defcdp}
A randomized algorithm $\mathcal{M}$ is called
$\delta$-approximate-$\rho$-zCDP if for all neighboring data sets $X$ and $X'$, there exist events 
$E$ (depending on $\mathcal{M}(X)$) and $E'$ (depending on $\mathcal{M}(X')$) such that 
$$\PR[E] \ge 1 - \delta\quad \text{and}\quad\PR[E'] \ge 1 - \delta~,$$ and we have  for all $\alpha>1$
\[
D_\alpha(P, Q)\le \rho\alpha\quad \text{and} \quad D_\alpha(Q, P)\le \rho\alpha
\] 
where $P$ and $Q$ are the distributions of $\mathcal{M}(X)$ and $\mathcal{M}(X')$ conditional on $E$ and $E'$, respectively.
\end{definition}
In the case $\delta=0$, $0$-approximate-$\rho$-zCDP recovers the classical $\rho$-zCDP. By Lemma 8.2 in  \cite{bun2016concentrated}, $\delta$-approximate-zCDP satisfies the composition and post-processing property. For completeness and later use, we will also recall the definition and the privacy guarantees of the most prominent algorithm.
\begin{Lemma}
\label{Lem_steinke_gauss} Let $T$ denote a $\mathbb{R}^d$-valued statistic.
 The Gaussian mechanism $\mathcal{M}(X) = T(X) + \frac{\Delta_2T}{\sqrt{2 \rho}}  Y$ where $Y \sim \mathcal{N}_d(0, I_{d \times d})$ and $\Delta_2 T:= \sup_{X \sim X'} \| T(X) - T(X') \|_2$, preserves $\rho$-zCDP.
\end{Lemma}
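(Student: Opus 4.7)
The plan is to verify the definition of $\delta$-approximate-$\rho$-zCDP directly with $\delta = 0$, so that $E$ and $E'$ can be taken to be the entire sample space and no conditioning is needed. For a fixed pair of neighbors $X \sim X'$, the mechanism outputs $\mathcal{M}(X) \sim P := \mathcal{N}_d(T(X), \sigma^2 I_{d\times d})$ and $\mathcal{M}(X') \sim Q := \mathcal{N}_d(T(X'), \sigma^2 I_{d\times d})$, where $\sigma = \Delta_2 T / \sqrt{2\rho}$. What has to be shown is the bound $D_\alpha(P,Q) \leq \rho \alpha$ uniformly in $\alpha > 1$; the companion bound $D_\alpha(Q,P) \leq \rho \alpha$ then follows by interchanging the roles of $X$ and $X'$.

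The key computational step is the closed-form expression for the Rényi divergence between two isotropic Gaussians of common covariance. Writing $\mu = T(X)$, $\mu' = T(X')$, and $p$, $q$ for the corresponding densities, I would expand
\begin{align}
p(t)^\alpha q(t)^{1-\alpha} \;\propto\; \exp\!\Bigl(-\tfrac{1}{2\sigma^2}\bigl[\alpha \|t-\mu\|_2^2 + (1-\alpha)\|t-\mu'\|_2^2\bigr]\Bigr),
\end{align}
complete the square in $t$, and observe that the remaining integral is again a Gaussian normalization constant. After cancellation the exponent reduces to $\tfrac{\alpha(\alpha-1)}{2\sigma^2}\|\mu-\mu'\|_2^2$, so that taking $\tfrac{1}{\alpha-1}\log$ yields
\begin{align}
D_\alpha(P,Q) \;=\; \frac{\alpha}{2\sigma^2}\,\|T(X) - T(X')\|_2^2.
\end{align}

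Finally, the definition of global $\ell_2$-sensitivity gives $\|T(X)-T(X')\|_2 \leq \Delta_2 T$, and substituting the calibrated scale $\sigma^2 = (\Delta_2 T)^2/(2\rho)$ into the previous display yields
\begin{align}
D_\alpha(P,Q) \;\leq\; \frac{\alpha (\Delta_2 T)^2}{2\sigma^2} \;=\; \rho\alpha,
\end{align}
which is exactly the required inequality. The same chain of estimates applies verbatim after swapping $X$ and $X'$, so both Rényi bounds in Definition \ref{defcdp} hold with $\delta=0$ and the trivial choice $E = E' = \Omega$. There is no real obstacle here beyond the Gaussian Rényi calculation; the only point that merits care is to keep track of signs when completing the square so that the factor $\alpha(\alpha-1)$ (rather than $\alpha^2$ or $\alpha$) emerges in the exponent, since this is what ultimately produces the linear-in-$\alpha$ bound demanded by the zCDP definition.
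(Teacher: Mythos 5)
Your proof is correct: the closed-form Rényi divergence between equal-covariance Gaussians, the sensitivity bound $\|T(X)-T(X')\|_2\le \Delta_2T$, and the calibration $\sigma^2=(\Delta_2T)^2/(2\rho)$ give exactly $D_\alpha(P,Q)\le\rho\alpha$ for all $\alpha>1$, and taking $E=E'$ to be the whole space with $\delta=0$ recovers plain $\rho$-zCDP as in Definition \ref{defcdp}. The paper does not prove this lemma itself but imports it from \cite{bun2016concentrated}, and the proof there is precisely this Gaussian Rényi computation, so your argument matches the canonical one.
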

In the context of maximum-type hypotheses such as \eqref{eq_rel_hyp_high} it will be important to extract the largest coordinates of a vector in a differentially private manner. In Algorithm \ref{alg:rep_noisy_max} we therefore formulate an algorithm that generalizes the classical Report-Noisy-Max algorithm  \citep[see e.g.][]{dwork2014algorithmic}, and the following proposition shows that this algorithm is $zCDP$.
\begin{prop}\label{prop:rnm}
Algorithm \ref{alg:rep_noisy_max} is 
$\ve^2/8$-zCDP.
\end{prop}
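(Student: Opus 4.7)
The plan is to recognize Algorithm~\ref{alg:rep_noisy_max} as a composition of the Gaussian mechanism with a data-independent post-processing step, so that the claim follows from Lemma~\ref{Lem_steinke_gauss} together with the post-processing/closure properties of zCDP stated after Definition~\ref{defcdp}. The target value $\rho=\varepsilon^2/8$ is strongly suggestive: for the Gaussian mechanism with noise scale $\sigma$, Lemma~\ref{Lem_steinke_gauss} gives $\rho = \Delta_2^2/(2\sigma^2)$, so matching $\rho = \varepsilon^2/8$ forces the noise scale inside the algorithm to be $\sigma = 2\Delta_2/\varepsilon$, which is the standard ``Report-Noisy-Max'' calibration lifted from Laplace noise (pure DP) to Gaussian noise (zCDP).

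First I would isolate the random step of Algorithm~\ref{alg:rep_noisy_max}. The algorithm takes as input a data-dependent score vector (presumably the vector of private pairwise dependence estimates, or transformed versions thereof) whose $L_2$-sensitivity is controlled by some $\Delta_2$, perturbs each coordinate by independent $\mathcal{N}(0,\sigma^2)$ noise with $\sigma = 2\Delta_2/\varepsilon$, and then deterministically returns the index (or the indices, in the generalized top-$k$ form) achieving the largest noisy scores. Lemma~\ref{Lem_steinke_gauss} applied to the intermediate noisy vector then gives that its release is $\varepsilon^2/8$-zCDP. Since the reported indices are a measurable function only of this noisy vector, the post-processing property of zCDP (Lemma 8.2 in \cite{bun2016concentrated}) immediately transfers the guarantee to the final output.

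The main obstacle will be verifying the sensitivity input to Lemma~\ref{Lem_steinke_gauss}. Naively, if many coordinates of the score vector can shift simultaneously when one observation is replaced, the $L_2$-sensitivity could scale like $\sqrt{p}$ times the per-coordinate sensitivity, which would spoil the clean $\varepsilon^2/8$ bound. I would therefore carefully identify the construction of the scores: in the setting of the paper these will come from $U$-statistic kernel evaluations, and one either exploits a kernel-level bound that gives a global $L_2$-sensitivity of the vector directly, or else argues (as in the classical Report-Noisy-Max analysis) that the argmax comparison is invariant under any common shift and can be analyzed one coordinate at a time. Either route reduces the problem to a single Gaussian-mechanism invocation and closes the proof.
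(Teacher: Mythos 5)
Your proof rests on a misreading of the algorithm: Algorithm~\ref{alg:rep_noisy_max} adds $\mathrm{Gumbel}(2\Delta_1/\varepsilon)$ noise, not Gaussian noise, so the entire reduction to Lemma~\ref{Lem_steinke_gauss} and post-processing does not apply. The correct analysis goes through the Gumbel-max trick: adding i.i.d.\ Gumbel noise to each score and returning the argmax is distributionally identical to sampling $j$ with probability proportional to $\exp\bigl(\varepsilon (q_j+\nu(j)) / (2\Delta_1)\bigr)$, i.e.\ the exponential mechanism. The paper's proof observes exactly this, bounds the per-query sensitivity $|q_j(X)-q_j(X')|$ by $\Delta_1$ to conclude $\varepsilon$-DP for the exponential mechanism, and then converts pure $\varepsilon$-DP to $\varepsilon^2/8$-zCDP using the bounded-range refinement of \cite{cesar2021bounding} (the generic $\varepsilon$-DP $\Rightarrow \varepsilon^2/2$-zCDP conversion would not give the stated constant; the factor-of-four improvement is specific to the exponential mechanism).

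The concern you flag in the final paragraph is actually a symptom of the wrong framing: the exponential-mechanism analysis needs only the $\ell_\infty$-type per-coordinate sensitivity of the utility scores, not the $\ell_2$-sensitivity of the whole vector, so the $\sqrt{p}$ blow-up never arises. Had the algorithm genuinely been Gaussian Report-Noisy-Max, the clean ``release the noisy vector, then post-process'' argument would also fail for the reason you suspected, and one would instead need the coordinate-wise argmax analysis; but that is moot here. To repair the proof you would need to (i) replace the Gaussian mechanism step with the Gumbel-max / exponential-mechanism equivalence, (ii) verify the per-query sensitivity bound, and (iii) invoke the exponential-mechanism-specific zCDP conversion rather than the Gaussian mechanism's closed form for $\rho$.
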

\vspace{-0.3cm}
\begin{algorithm}[h]
  \caption{Regularized Report-Noisy-Max (RL-GAP)}\label{alg:rep_noisy_max}
  \begin{algorithmic}[1]     
    \Require vector $q=(q_1,\hdots, q_{p})^\top\in\R^p$, privacy parameter $\epsilon$, $\ell_1$ sensitivity $\Delta_1$,
            regularizer $
            \nu:\{1,\dots, p\}\to\mathbb{R}$
 
    \State \Return $ \argmax_{j\in\{1,\hdots, p\}}
        \Bigl\{\,q_j + \nu(j)
        + \mathrm{Gumbel}\!\bigl(\tfrac{2\Delta_1}{\ve}\bigr) \Bigr\}$
  \end{algorithmic}
\end{algorithm}
\vspace{-0.3cm}
\begin{remark}
\label{remreg}
{\rm 
The privacy analysis can be carried out without the regularizer $\nu$. However, we prefer to state 
the slightly more general version with the regularizer $\nu$ to provide practitioners with greater flexibility. For instance, by adjusting $\nu$, one could overweight earlier indices $j$ and underweight later ones, which might be reasonable if the user has prior knowledge.}
\end{remark}

\section{Baseline Methodology}\label{sec:highdim}

  \def\theequation{3.\arabic{equation}}	
	\setcounter{equation}{0}
In this section we consider the $U$-statistic framework  introduced in Section \ref{subsec:U_statistic} and  lay out the challenges one encounters when trying to extend existing testing methodology  for the relevant hypotheses \eqref{eq_rel_hyp_high}
from the private finite dimensional to the private and high-dimensional  setting. Although none of these methods discussed here  can be finally used in the high-dimensional regime, some of their component techniques are useful for the development of our advanced methodology in Section \ref{sec:4}. There, we will propose and theoretically validate a differentially private method that achieves good finite sample performance under mild assumptions - even in the high-dimensional regime. As explained in Example \ref{ex1}, the solution to the problem of testing for relevant dependencies under privacy  constraints appears as a special case if the dependence measure can be expressed as a $U$-statistic.

\subsection{Concentration-based tests and their limitations}
\label{sec:hoeffding}
\subsubsection{A simple  test based on a concentration inequality}
In the following paragraph we will ignore privacy aspects for simplicity of presentation, as they can easily be integrated into the discussion without changing any of the conclusions.

A first simple and very conservative approach can be based on concentration inequalities for $U$-statistics. More precisely, under the null hypothesis  in  \eqref{eq_rel_hyp_high} we have  
\begin{align}
    \label{hol1}
    \sqrt{n} \big ( \norm{U}_{\infty} 
    -\Delta \big )  \leq \sqrt{n}\max_{1 \leq i \leq p}(|U_i|-|\theta_i|)\leq \sqrt{n}\max_{1 \leq i \leq p}|U_i-\theta_i|~,
\end{align}
where  $\norm{U}_{\infty}=\max_{1 \leq i \leq p}(|U_i|$.
For bounded kernels, the classical Hoeffding inequality  \citep{Hoeffding1963} then yields under the null hypothesis in  \eqref{eq_rel_hyp_high} that 
\begin{align*}
    \PR(\sqrt{n}\max_{1 \leq i \leq p} (|U_i|-\Delta)>t)\leq p\max_{1 \leq i \leq p}\PR(\sqrt{n}(|U_i- \theta_i |)>t)\leq 2p\exp\Big(\frac{-t^2}{2\|h\|_\infty r}\Big)~, 
\end{align*}
where $\| h \|_\infty = \max_{i=1}^p \| h_i \|_\infty $ denotes the maximum of the sup-norms of components of the vector $h= (h_1, \ldots , h_p)^\top$.  Therefore, it is easy to see that the decision rule
\begin{align}
\label{hol2}
\phi(x) = 
\begin{cases}
  1, & \text{if } 
    \displaystyle \max_{1 \leq i \leq p} \bigl(|U_i| - \Delta \bigr) 
    > \sqrt{\tfrac{2 \log(2p/\alpha)\,\|h\|_\infty\, r}{n}}, \\[1.2ex]
  0, & \text{otherwise}
\end{cases}~.
\end{align}
defines  a consistent level $\alpha$ test. Unfortunately, it is well known that tests of this type are extremely conservative (see also our numerical results in Figure \ref{fig:power_curves_grid_con} in  Section \ref{sec:experiments}).
As the performance of this approach will deteriorate even further when additional noise is introduced to
ensure privacy, we will need to improve upon it.

\subsubsection{Extension of existing finite dimensional methodology}
\label{sec312}
As a first approach we will reflect upon the results concerning the fixed dimension setting presented in the PhD thesis of \cite{Dunsche2025}. As we will see later, the methods developed therein fail in the high-dimensional regime, but this approach nonetheless serves as a good introduction to our general methodology developed in Section \ref{sec:4}.

Based on the observation that differentially private testing procedures inflate the variance of the test statistic in an asymptotically but not finite sample negligible manner, a parametric bootstrap procedure was constructed that takes the privatization noise into consideration. It utilizes an analog of the upper bound in \eqref{hol1} for a consistent and private estimator of   $\norm{\theta}_\infty$, say $\norm{U}_{\infty}^{\text{DP}}$. 
The right-hand side of the corresponding inequality can be approximated by the  $\| \cdot \|_\infty$-norm of a Gaussian vector $Z\sim \mathcal N(0,\zeta_1)$, where  $\zeta_1 =(\zeta_{1,ij})_{i,j=1, \ldots , p}$ is defined in \eqref{eq_var_def}. 
Therefore, we define the decision rule
\begin{equation}\label{eq_finite_dim}
   T^{\text{DP}} := \sqrt{n}\bigl(\norm{U}_{\infty}^{\text{DP}} - \Delta\bigr) > q_{1-\alpha}^*~,
\end{equation}
where $q_{1-\alpha}^*$ denotes the $(1-\alpha)$-quantile of the distribution of $\| Z \|_\infty^{\text{DP}} $ with $Z\sim \mathcal N(0,\hat \zeta_1^{\text{DP}})$. Here, $\hat \zeta_1^{\text{DP}}$ is a consistent private estimator of the covariance matrix $\zeta_1 $. More details on this parametric bootstrap test can be found in Algorithm \ref{alg_monte_carlo_quantile_mult} in Section \ref{appendixalg} of the online supplement.
For finite dimension, this yields a consistent and asymptotic level-$\alpha$ test under suitable regularity conditions. A precise formulation of this statement  and a proof are provided in Section ~\ref{App:finite_dimensional} of the online supplement.
However,  its performance deteriorates with increasing dimension, since standard private estimates of the asymptotic variance may become inconsistent in the high-dimensional regime. For instance, under the basic additive Gaussian mechanism for private covariance estimation  \cite[see Algorithm \ref{alg_gausscov} in][]{10.1145/2591796.2591883},
the private estimator $\hat{\zeta}_1^{\text{DP}}$ is already inconsistent in the regime $p \simeq \sqrt{n}$ with respect to the entry-wise maximum norm.

Moreover, even in the finite dimensional setting, the test \eqref{eq_finite_dim} is suboptimal, including the non-private case. To highlight the difficulties one even encounters here note that the quantile for the supremum based test statistic $T^{\text{DP}}$ in the decision rule \eqref{eq_finite_dim} is calculated 
from a privatized maximum norm of the a $p$-dimensional normal distribution $\mathcal N(0,\hat \zeta_1^{\text{DP}})$ after an application of an inequality of the type \eqref{hol1}. However, results on the directional differentiability of the supremum norm  \citep[see Theorem 2.1 in][] {carcamo2020directional} show that the asymptotic distribution of the statistic $T^{\text{DP}}$ is given by the maximum norm of a $k$-dimensional distribution $\mathcal N (0, \zeta_1(k))$, where \begin{align}
        \label{eq:limit:cov}
         \zeta_1(k) = \big (
   \text{sign}(\theta_i\theta_j)\zeta_{1,ij} \big )_{  i,j \in \{i_1,...,i_k\}} 
    \end{align} denotes  the 
$k \times k$ matrix, which is obtained from the matrix $\zeta_1 = \big (\zeta_{1,ij} \big )_{  i,j \in \{1,...,p\}}$  by selecting the specific rows and columns with  indices in the  {\it extremal set } 
\begin{equation}\label{eq:rel_def}
  \mathcal E:= \{i_1,...,i_k\}:=\left\{\, i =1,\hdots, p \; : \;
    |\theta_i| = \norm{\theta}_\infty \right\} ~.
\end{equation}
Thus, asymptotically we are comparing the maximum norm of a $k$-dimensional normal distribution with the maximum  norm of a $p$-dimensional normal distribution, resulting in an extremely conservative test with not much power. 

This phenomenon becomes particularly striking if the cardinality is substantially smaller than the dimension, that is $k\ll p$.  In this case, the bootstrap test \eqref{eq_finite_dim} constructs too large quantiles based on the aggregated noise of all coordinates, leading to suboptimal performance in finite samples. This situation becomes even worse when $p$ grows with $n$ (here neglecting the fact that private estimation already fails).
\smallskip 

Despite these discouraging observations, this discussion also suggests a solution of this problem. Estimate the coordinates in the extremal set \eqref{eq:rel_est}
and the signs of the corresponding coefficients $\theta_{ij}$ in the vector $\theta$ and apply a modified version of the test \eqref{eq_finite_dim} using the quantiles of the (privatized) maximum 
$\max_{1 \leq i \leq k}Z_i$, where $ Z\sim \mathcal N (0, (\hat \zeta_1(k))^{\text{DP}})$ follows a $k-$dimensional normal distribution and $\hat \zeta_1^{\text{DP}} (k)$ is the corresponding privatized estimator of the $k\times k$  covariance matrix  \eqref{eq:limit:cov}.   
In a non-private setting this solves the problem, but unfortunately it fails when taking privacy into account for the following reasons: 
\begin{enumerate}   
    \item The resulting vector of estimated relevant coordinates might still be high-dimensional in the sense that $k\gtrsim n$, which leads to inconsistent private covariance estimation.  
    \item Privately estimating the extremal set   $\mathcal E$  is a highly non-trivial problem. Even if the number of elements of the set ${\cal E}$ were known, inferential methodology based on  standard private selection methods, like top-$k$ selection \citep{qiao2021oneshot} or offline sparse vector techniques \citep{lyu2016understanding}, perform poor in the present setting, and we refer to Section   \ref{sec:svt} of the online supplement for a more thorough discussion of this fact.
    \item  Approaches based on estimating sub-sets of the extremal set which are still sufficiently small to ensure feasibility of the bootstrap also need to be designed carefully: the quantity 
    $\text{sign}(\theta_i \theta_j)$ in the
    covariance entries in \eqref{eq:limit:cov} can be estimated by $\text{sign}(U_iU_j)$. However, this statistic  can
have a sensitivity of constant order because  the sign  $\text{sign}(U_iU_j)$ might flip with non-negligible probability if one of the coordinates $\theta_i$ or $\theta_j$ vanishes.
\end{enumerate}
\section{Extremal set estimation: balancing privacy and statistical accuracy in high dimensions}
\label{sec:4}

  \def\theequation{4.\arabic{equation}}	
	\setcounter{equation}{0}

Our goal is to design a \textit{differentially private} testing procedure for the hypotheses in  \eqref{eq_rel_hyp_high} in the high-dimensional setting where $p$ diverges with $n$.  The discussion in the previous section suggests that this task requires an efficient procedure for estimating the extremal set $\mathcal E$  defined 
in \eqref{eq:rel_def}.  A canonical non-private estimator of this  set is given by
\begin{equation}\label{eq:rel_est}
  \hat{\mathcal E}= \Big \{\, i =1,\hdots p \; : \;
   |U_i|\geq \norm{U}_\infty
   -  \sqrt{\tfrac{\log(p)\,\log(n)}{n}}
  \Big \}~,
\end{equation}
but it is a highly non-trivial problem how to privatize this estimate. In the following discussion we will motivate an alternative approach that a) reduces the dimension, making private covariance estimation feasible and that b) retains favorable statistical properties in a wide range of real world scenarios while also maintaining appropriate privacy guarantees. We start with the most obvious method, namely the sparse vector technique (SVT) and explain why it fails in the present context. Motivated by an analysis of the failure, we then propose our approach, which includes an adaptive choice of the top-$k$ components. Here adaptivity refers to the estimation of the cardinality $k = \# {\cal E}$ of the extremal set \eqref{eq:rel_def}, which is a non-trivial choice to make in advance and heavily depends on the data at hand.

A natural approach to construct a private estimator of $\mathcal{E}$ is the SVT \citep[see e.g.][]{lyu2016understanding,NEURIPS2020_e9bf14a4}, which is described in Section \ref{sec:svt} of the online supplement for the sake of completeness. For clarity, we also provide the full algorithmic description in Section \ref{appendixalg} of the online supplement. The basic idea is to privately identify all queries that surpass some prescribed threshold $t$. Setting the queries $q_i= |U_i|-\norm{U}_\infty$ for all $i=1,\hdots p$ and the threshold as $t=-\sqrt{{\log(p)\,\log(n)}/{n}}$, then yields a private  and consistent (under mild assumptions) estimator of the extremal set $\mathcal E$.
 Unfortunately, while each individual query has small sensitivity of order $\tilde{{O}}(1/n)$, the finite sample performance of this approach suffers severely from the composition of the $k$ successes (in which $q_i$ surpasses $t$) and the number of queries $p$ in the  high-dimensional setting. The resulting extremal set estimator is not usable in practice as we either obtain a poor estimate or poor privacy guarantees. For an empirical illustration of these issues, we refer the reader to Section \ref{sec:svt} of the online supplement.

Another common approach for estimating $\mathcal E$ is obtained by privately reporting the top-$k$ values of $|U|$. When $k$ is known in advance, Algorithm 1 in \cite{qiao2021oneshot} could be applied directly to obtain a private estimate of the extremal set. However, in our setting $k$ is unknown. As discussed at the end of the previous section, a poor choice will lead to systematic size inflation or deflation when $k$ is chosen too small or large, respectively. Using the private top-$k$ value algorithm is thus not feasible without a good estimate of $k$.  Moreover, even with knowledge of $k$, the privacy cost of the top-$k$ algorithm scales with $\sqrt{k}$ which is undesirable already for fairly small $k$.
\smallskip

Motivated by this discussion, we propose a two step procedure that
\begin{itemize}
    \item[(1)] constructs a differentially private estimator $\hat k$ of $k$. 
    \item[(2)] based on the estimator $\hat k$ privately estimate the  coordinates of the extremal set $\{i_{1},\hdots, i_{k}\}$ in \eqref{eq:rel_def} by $\{\hat i_{1},\hdots, \hat i_{\hat k}\}$ in a one-shot approach instead of iteratively sampling them.
\end{itemize}
\smallskip

\textbf{(1) Adaptive top-$k$ components:} For the private estimation of $k$ we will modify an approach for counting queries proposed by \cite{pmlr-v151-zhu22e}, such that it is effective in our setting as well. Let $|U|_{(1)}\geq \hdots\geq |U|_{(p)}$ denote the order statistics of the absolute values of the components of the vector $U=(U_1,\hdots, U_p)^\top$ of $U$-statistics. We begin by choosing a suitable number of coordinates that will be included in the estimator of the set $\mathcal{E}$. The construction is motivated by the observation that there is often a small number of coordinates with a large signal that are clearly separated from the remaining bulk of coordinates. 
To make our point clear, instead of iteratively identifying potential coordinates of the extremal set, as it is done by SVT in Algorithm \ref{alg_gsvt} of the online supplement, we aim to identify a single index that separates the extremes from the bulk of the data $(|U|_i)_{1 \leq i \leq p}$.  In doing so, we reduce the complexity  to a ``one-dimensional''  problem in a composition sense, making it very cheap from a privacy perspective. We illustrate our observation in Figure \ref{fig:histogram}, which displays the pair-wise Kendall's $\tau$'s for the genome data set discussed in Section \ref{subsec:real_world_examples}.
 
\begin{figure}
    \centering
    \includegraphics[width=6cm]{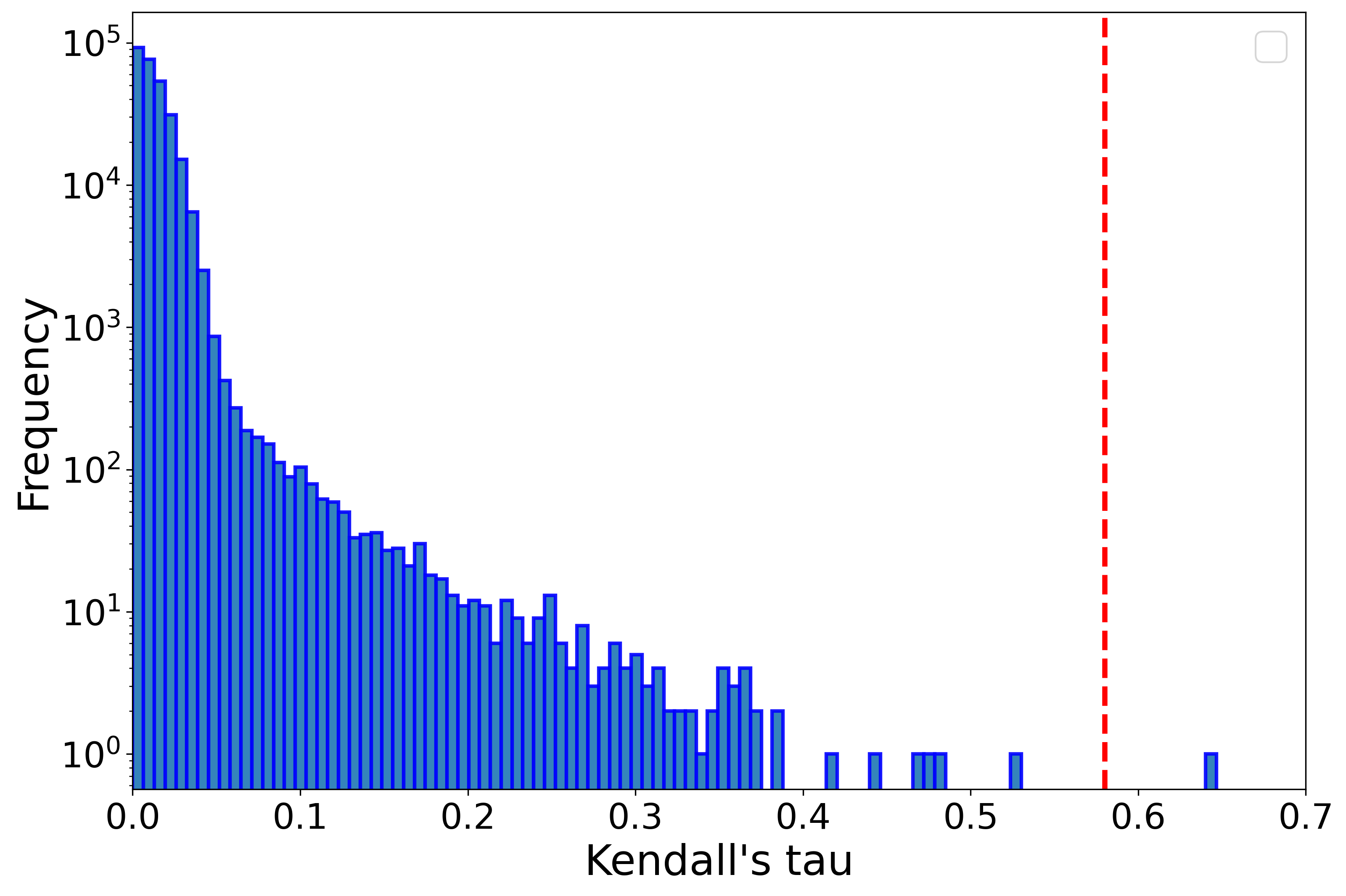}
    \caption{\it Histogram of pair-wise (absolute) Kendall's $\tau$ coefficients between different genomes from the \cite{1000GenomesProjectConsortium2015} of the $21.55$ Mb - $21.65$ Mb window restricted to chromosome 22.}
    \label{fig:histogram}
\end{figure}
For a closer alignment with the notation in  \cite{pmlr-v151-zhu22e} we 
characterize the extremal set ${\cal E}$ by a $p$-dimensional  vector of indicators $(\mathbbm{1}_1, \ldots, \mathbbm{1}_p)^\top$, where $\mathbbm{1}_j=1$ if and only if $j\in {\cal E} = \{i_{1},\hdots, i_{k}\}$. To estimate the cardinality $k= \#{\cal E}$, let us first consider the increments of the 
ordered absolute values  of the components of the  $U$-statistic, that is  
\begin{align}
\label{eq:defin:queries}
    q_j := q_j(X)=|U|_{(j)}  - |U|_{(j+1)}\quad j=1,...,p-1~.
\end{align}

Using these increments, we now estimate $k$ by  Algorithm \ref{alg:rep_noisy_max}, which is a differentially private maximum-selection mechanism that reports the coordinate at which the maximum increment is achieved, that is 
\begin{align*}
     \hat k=\textproc{RL-GAP}(q=(q_1,\hdots, q_{p-1}),\ve,4r/n \norm{h}_\infty,\nu=0)~.
\end{align*}
The associated privacy guarantees are stated in  Proposition~\ref{prop:rnm}. In practice, this will often yield a good separation between the coordinates with a large signal and the remaining coordinates (see also Figure \ref{fig:histogram}). There exist also settings, where such a  clear separation is not possible, and we will see later  how our method discriminates between these options in a data-adaptive way.
\smallskip

\textbf{(2) Estimation of the extremal set:} Having obtained an estimator $\hat k$ for $k$, we now move one step further and define estimates of the actual coordinates $i_{1},\hdots, i_{k}$ with privacy guarantees. For that purpose, we make use of the propose-test-release framework of \cite{10.1145/1536414.1536466} and 
 consider the  statistic $(\hat{\mathbbm{1}}_1 (X) , \ldots, \hat{ \mathbbm{1}}_p(X))^\top$, where
\begin{equation*}
  \hat{\mathbbm{1}}_j (X) :=\begin{cases}
        1; \quad \text{if } |U_j|  \text{ is among the
top $\hat k$ U-statistics $ |U|_{(1)} , \ldots , |U|_{(\hat k)} $}\\
        0; \quad \text{else}
    \end{cases}~,
\end{equation*}
and define 
\begin{equation}\label{eq:private_extremal_set}
\hat {\cal E}^{\text{DP}}:= \big \{ j=1, \ldots , p ~|~\hat{\mathbbm{1}}_j (X) =1 \big \}~.
\end{equation}
However, as  pointed out in \cite{pmlr-v151-zhu22e}, the global $\ell_2$ sensitivity of this  query is $\sqrt{2\hat k}$. To solve this issue  we now make use of local sensitivity and the propose-test-release framework \cite[as introduced, for example, in][]{10.1145/1250790.1250803}. More specifically, we will prove in Lemma \ref{lem:local_sensitivity} of the online supplement that the local sensitivity of $\indvec$ is indeed $0$ whenever there is a valid separation between the extremal  coordinates and the bulk (see Assumption \ref{As_Sparse} below for a precise formulation).  This means the following: 
for a  fixed dataset, say $X$, with a sufficiently large gap (of order $O(1/n)$) between the ordered $U$-statistics, changing one entry actually does not change the estimated extremal set. The estimated set in equation \eqref{eq:private_extremal_set} can then be released using the propose-test-release framework. In certain cases this set might still contain more than $O(\sqrt{n})$ points, which is problematic for the privatization of the covariance matrix. To solve this issue, we randomly select $\log(p)$ coordinates from the estimated set and proceed from there. We summarize the described procedure in Algorithm \ref{alg:topk} below and its privacy guarantee is formalized in the following result.
\begin{theorem}\label{thm_topk}
    Algorithm \ref{alg:topk} satisfies $\delta-$approximate-$\rho$-zCDP. 
\end{theorem}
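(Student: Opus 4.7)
The plan is to decompose Algorithm \ref{alg:topk} into its privacy-relevant primitives and apply the composition and post-processing properties of approximate zCDP (Lemma 8.2 in \cite{bun2016concentrated}). From the construction preceding the theorem, the algorithm consists of three stages: (i) estimating the cardinality $\hat k$ via the RL-GAP mechanism applied to the increments $q_j = |U|_{(j)} - |U|_{(j+1)}$; (ii) releasing the indicator vector $\indvec$ of the top-$\hat k$ coordinates by means of a propose-test-release (PTR) step built on the local-sensitivity result; and (iii) subsampling $\log(p)$ coordinates at random from $\hatEDP$. The third stage depends on the data only through the outputs of (i) and (ii), so by post-processing it contributes nothing to the privacy budget, and the problem reduces to accounting for (i) and (ii) and composing them.

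For stage (i), a standard calculation for $U$-statistics shows that replacing one observation alters each component $U_j$, and hence each ordered value $|U|_{(j)}$, by at most $2r\|h\|_\infty/n$; this forces each increment $q_j$ to change by at most $4r\|h\|_\infty/n$, matching the sensitivity fed into RL-GAP. Proposition \ref{prop:rnm} then shows that stage (i) is $\rho_1$-zCDP with $\rho_1=\ve^2/8$ for the budget $\ve$ allocated to it. For stage (ii), the key ingredient is Lemma \ref{lem:local_sensitivity} of the online supplement, which states that the local sensitivity of $\indvec$ vanishes whenever the data exhibit a sufficient gap (as quantified in Assumption \ref{As_Sparse}) between the top $\hat k$ absolute $U$-statistics and the rest. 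The PTR step then privately verifies that the realised dataset lies in this stable regime: it releases a noisy distance-to-instability, compares it to a threshold calibrated so that any dataset outside the stable regime fails the test with probability at most $\delta$, and returns $\indvec$ verbatim only on a pass. Standard PTR analysis then gives $\delta$-approximate-$\rho_2$-zCDP at the noise scale built into the algorithm. Composing (i) and (ii) via the approximate-zCDP composition of \cite{bun2016concentrated} yields $\delta$-approximate-$(\rho_1+\rho_2)$-zCDP, as claimed.

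The main obstacle I anticipate is the PTR step: one must verify that the stability event produced by the local-sensitivity lemma coincides with the event on which the noisy test passes with probability at least $1-\delta$, and that the noise scale is calibrated so that the released indicator is left untouched on that event. The sensitivity bookkeeping in (i) and the final composition step are routine by comparison, and once the PTR accounting is in place, the post-processing of the random subsampling delivers the result with no additional cost.
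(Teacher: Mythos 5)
Your overall decomposition is the same as the paper's: the RL-GAP call on the increments $q_j=|U|_{(j)}-|U|_{(j+1)}$ (sensitivity $4rL_\infty/n$, cost $\rho/2$ by Proposition \ref{prop:rnm} with $\ve=2\sqrt{\rho}$), a stability-based release of $\indvec$ resting on Lemma \ref{lem:local_sensitivity}, adaptive composition, and post-processing for the random $\log(p)$-subselection. The only real difference in route is that the paper does not invoke a black-box PTR theorem: it proves the $\delta$-approximate-$\rho/2$-zCDP of the release step directly by a two-case R\'enyi-divergence computation. If the top-$\hat k$ index sets of two neighbors coincide, the output is a post-processing of the noisy gap $\hat q_{\hat k}=q_{\hat k}+\mathcal N(0,\sigma^2)-\sigma z_{1-\delta}$ with $\sigma=t/\sqrt{\rho}$ and sensitivity of $q_{\hat k}$ at most $t$, so the divergence is at most $\alpha\rho/2$; if they differ, Lemma \ref{lem:local_sensitivity} forces $q_{\hat k}(X),q_{\hat k}(X')\le t$, and on the event $E=\{\hat q_{\hat k}\le q_{\hat k}\}$, which has probability at least $1-\delta$ thanks to the $-\sigma z_{1-\delta}$ shift, both runs output $\bot$, so the conditional divergence vanishes — this is exactly where the $\delta$ and the events $E,E'$ of Definition \ref{defcdp} enter.

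The one substantive flaw in your write-up is the direction of the $\delta$-calibration in the PTR step. You state that the threshold is chosen so that a dataset outside the stable regime \emph{fails} the test with probability at most $\delta$; privacy requires the opposite: an unstable dataset must \emph{pass} (and hence release $\indvec$ verbatim) with probability at most $\delta$, i.e.\ fail with probability at least $1-\delta$. As literally written, your calibration would release the exact indicator vector on unstable data with probability close to one and provide no protection there. Similarly, the verification you flag as the main obstacle — that the noisy test passes with probability at least $1-\delta$ on the stability event — is the utility direction, not the privacy-relevant one; the privacy argument only needs (a) zero local sensitivity on a pass over stable data and (b) pass-probability at most $\delta$ on unstable data, which is what the shift $-\sigma z_{1-\delta}$ in Algorithm \ref{alg:topk} accomplishes. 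With that inequality turned around, your plan coincides with the paper's proof, including the $\rho/2+\rho/2$ budget split and the post-processing of the subsampling.
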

We emphasize that the propose-test-release approach outputs an estimate of the extremal set while using privacy only twice, which is in stark contrast to the SVT-approach.
However,  Algorithm \ref{alg:topk} outputs $\bot$ if it does not find a set of well separated $U$-statistics. In this case  we need a different way of proceeding, and we present two options for this purpose:
\smallskip

\textbf{Option 1  (rely on concentration):}
We can proceed as in the previous section and use a privatized version of the concentration based test 
\begin{equation}\label{eq:hoeffding_test}
\mathbbm{1}\Big\{\norm{U}_\infty^{\text{DP}}-\Delta>\sqrt{\log(2p/\alpha)2\|h\|_\infty r/n}\Big\}~,
\end{equation}
where $\norm{U}_\infty^{DP}:=\norm{U}_\infty+\frac{\Delta_2\norm{U}_\infty}{\sqrt{2 \rho}} Z$ with $Z\sim \mathcal N(0, 1)$, $\Delta_2\norm{U}_\infty=2r\norm{h}_\infty/n$ and $\rho$ is the desired zCDP privacy budget.
This test keeps asymptotically
the nominal level and is  able to detect relevant signals when the sample size is sufficiently large. However, as discussed in Section \ref{sec:hoeffding} this test is extremely conservative and - even neglecting the additional variance due to privacy - not efficient for reasonable sample sizes. We highlight this fact by an empirical study in  Section \ref{sec:experiments}. 
\smallskip 

\textbf{Option 2 (extreme value analysis):}
Assuming that the sign adjusted pairwise correlations $\text{sign}(\theta_i\theta_j)\zeta_{1,ij}$ of $U_i$ and $U_j$ are non-negative for all statistics that satisfy  
$$
|\theta_i|\geq \Delta-\gamma~
$$  
for some $\gamma>0$, we can construct a test, that does not require knowledge about the extremal set, which is less conservative than the concentration based approach and is also asymptotically pivotal. For this purpose we use the observation  that
\begin{align}
\label{hd100}
    \sqrt{n}\max_{1 \leq i \leq p}(|U_i|-\Delta) \leq   \sqrt{n}\max_{\substack{1 \leq i \leq p\\ |\theta_i|\geq \Delta-\gamma}}\text{sign}(U_i)(U_i-\theta_i)+o_\PR(1)~
\end{align}
\citep[see][]{patrick_annals}. 
Results on high-dimensional Gaussian approximation then yield that the right hand side of \eqref{hd100} can, in a suitable sense, be approximated by
\[
    \sqrt{n}\max_{\substack{1 \leq i \leq p\\ |\theta_i|\geq \Delta-\gamma}}\text{sign}(U_i)Z_i
\]
for a certain Gaussian vector $Z\in \R^p$ that has the same covariance structure as the vector $\big (\text{sign}(U_i)(U_i-\theta_i) \big )_{1 \leq i \leq p;  |\theta_i|\geq \Delta-\gamma} $. Assuming that $\|h\|_\infty\leq L_\infty$ for some $L_\infty\in (0,+\infty)$, we further use  the Bhatia-Davis inequality \citep{Bhatia:Davis:2000} to observe that under $H_0(\Delta)$ it holds that
\[
    \mathbb{V}ar(Z_i)\simeq\mathbb{V}ar(U_i)\leq L_\infty^2-(\Delta-\gamma)^2 ~,
\]
for all indices $i \in \{ 1, \ldots , p\} $ with  $|\theta_i|\geq \Delta-\gamma$. Therefore, we obtain
\[   \sqrt{n}\max_{\substack{1 \leq i \leq p\\ |\theta_i|\geq \Delta-\gamma}}\text{sign}(U_i)Z_i\leq \sqrt{n}\max_{1 \leq i \leq p}\tilde Z_i ~, 
\]
whenever the left hand side quantity is positive. Here the random variables $\tilde Z_i$ are defined by
\begin{align*}
    \tilde Z_i=\begin{cases}
       \text{sign}(U_i) Z_i\frac{\sqrt{L_\infty^2-(\Delta-\gamma)^2}}{\sqrt{\mathbb{V}ar(Z_i)}} \quad& |\theta_i|\geq \Delta-\gamma \\
        Y_i \quad & |\theta_i|< \Delta-\gamma~,
    \end{cases}~.
\end{align*}
where $(Y_i)_{1 \leq i\leq p}$ is a sequence of iid normal distributions with variance $1-(\Delta-\gamma)^2$  that is independent from the data. We may then use the assumption of non-negative sign adjusted pairwise correlations, the consistency of the sign estimators and Slepian's Lemma \citep{Slepian1962} to upper bound this quantity (in distribution, i.e. first order stochastic dominance)  by
\[
   \sqrt{n}\max_{1 \leq i \leq p} Y_i 
\]
which converges, appropriately rescaled, to a Gumbel distribution with scale parameter $\sqrt{L_\infty^2-(\Delta-\gamma)^2}$.
We can then proceed by using the associated Gumbel quantiles for our test decision whenever Algorithm \ref{alg:topk} outputs $\bot$. 

\begin{algorithm}[H]
  \caption{P-REL: Adaptive private estimation of the extremal set $\mathcal E$}
  \label{alg:topk}
  \begin{algorithmic}[1]
    \Require Ordered $|U|_{(1)},\hdots, |U|_{(p)}$, appr. zCDP budget parameters $\delta,\rho$ and threshold $t:=4 r/n L_\infty$.
    \Ensure Estimated extremal set $\hatEDP$ ~.
    \Function{\textnormal{P-REL}}{$U$, $\delta$, $\rho$, $t$}
     \State Set $q_j = |U|_{(j)}  - |U|_{(j+1)}$ for $j=1,\hdots, p-1$.
    \State Obtain $\hat k$ by invoking Algorithm \ref{alg:rep_noisy_max} with $q=(q_1, \ldots , q_{p-1})$ and $\ve = 2\sqrt{\rho}$.
    \State Set $\sigma = t/\sqrt{\rho}$ and construct a high–probability lower bound \;
           $ \hat{q}_{\hat k} = q_{\hat{k}}+ \mathcal{N}(0,\sigma^{2})- \sigma z_{1-\delta}$ for $q_{\hat k}$.
    \If{$\hat{q}_{\hat k} > t$}
        \If{$\hat k\leq \log(p)$} 
      \State \Return $\hat i_{1},\dots,\hat i_{\hat k}$.
                \Else
      \State \Return Randomly draw $\log(p)$ indices from $\hat i_{1},\dots,\hat i_{\hat k}$.
      \EndIf
    \Else
      \State \Return $\bot$.
    \EndIf
	\EndFunction

  \end{algorithmic}
\end{algorithm}
\begin{algorithm}[H]
  \caption{\textproc{P-HD-U-TEST}: Private High-Dimensional U-statistic test}
  \label{alg:HD_test}
  \begin{algorithmic}[1]
    \Require Data $X$, privacy $\delta,\rho$, $\alpha$, $\ell_2$ sensitivity $\Delta_2 \hat \zeta_1 (k)$, Bootstrap iterations $B$, Gumbel parameter $\gamma$.
     \Ensure reject or fail to reject $H_0$ in \eqref{eq_relevant_hypotheses} and output $\{\hat i_{1},\hdots, \hat i_{\hat k}\},\norm{U}_\infty^{\text{DP}} $ and $\hat\zeta_1^{\text{DP}}$.
     \Function{P-HD-U-TEST}{$X$, $\rho$, $\delta$, $\Delta$,$\alpha$, $\Delta_2 \hat \zeta_1$, $B$, $\gamma$}
      \State Compute U-statistics $|U|:=(|U|_1,\hdots, |U|_p)^\top$.
      \State Run $\hat{\mathcal E}^{\text{DP}}=$\textproc{P-REL}$(|U|,\delta,\rho/3,t=4r/nL_\infty)$.\Comment{Algorithm \ref{alg:topk}}
      \If{$\hat{\mathcal E}^{\text{DP}}=\{\hat i_{1},\hdots, \hat i_{\hat k}\}$ for some $\hat k\leq \log(p)$}
       \State Compute $\hat \zeta_1(\hat k)$ and $\hat S=\big(\text{sign}(U_iU_j)\big)_{i,j}$.
       \State Obtain bootstrap quantile
       $\hat q_{1-\alpha}^*=\textproc{HQU}(\hat \zeta_1(\hat k),\hat S, n, B, 2r/nL_\infty, \Delta_2\hat\zeta_1(k), \rho/3)$. \Comment{Algorithm \ref{alg_monte_carlo_quantile_highdim}}
       \State Compute private estimator $\norm{U}_\infty^{\text{DP}}=\norm{U}_\infty+Z$ with $\rho/3$. \Comment{Gaussian mechanism}
      \State Derive test decision $\text{dec}=\mathbbm{1}\{\norm{U}_\infty^{\text{DP}}\geq\hat q_{1-\alpha}^*+\Delta\}$.
        \If{$\text{dec}=1$}
      \State \Return Reject $H_0$ and output $\{\hat i_{1},\hdots, \hat i_{\hat k}\},\norm{U}_\infty^{\text{DP}} $ and $\hat\zeta_1^{\text{DP}}$.
      \Else
      \State \Return Fail to reject $H_0$ and output $\{\hat i_{1},\hdots, \hat i_{\hat k}\},\norm{U}_\infty^{\text{DP}} $ and $\hat\zeta_1^{\text{DP}}$.
        \EndIf
      \Else
      \State \Return \textproc{P-GUMBEL-TEST}($|U|$, $2\rho/3$, $n$, $p$,$\gamma$)~. \Comment{Algorithm \ref{alg:Gumbel}}
      \EndIf
\EndFunction
  \end{algorithmic}
\end{algorithm}

We emphasize that in practice, although  this method yields better results than the simple concentration  bound approach, our empirical results in Section \ref{sec:experiments} demonstrate, that it is still far from being comparable to Algorithm \ref{alg:topk} when separation is in fact possible. Thus we only recommend its stand-alone application in the case where it is clear that a separation is not possible.

The resulting procedure is summarized in Algorithm \ref{alg:HD_test}, which  defines a test for the relevant hypotheses \eqref{eq_relevant_hypotheses}.  In the following section we prove that this test is a valid procedure from an asymptotic point of view.

\subsection{Statistical guarantees}\label{subsec:asymptotic_theory}

We make the following  assumptions.
\begin{assumption}\label{As_Sparse}$ $
\begin{itemize}    
    \item[(P)] There exists a set $\mathcal{B}$ with $\PR(\mathcal{B})=1-o(1)$ such that for any $l, j_1,...,j_l$ the equality
    \begin{align*}
        &\PR( U_{j_1}  \geq  t_{1},   \ldots , U_{j_l}  \geq  t_{l}  ) = \PR( \tilde U_{1} \geq  t_{1} , \ldots  ,  U_{{\hat k}}  \geq  t_{\hat k}  
        |\hat i_1=j_1,...,\hat i_{\hat k}=j_l, \hat k=l)
    \end{align*}
    
  holds on $\mathcal{B}$, where   $\tilde U:=(\tilde U_1,\hdots,\tilde U_{\hat k})= (U_{\hat i_1},\hdots,U_{\hat i_{\hat k}})^\top$ is the sub-vector of $U$  defined by   the output of Algorithm \ref{alg:topk}.
    \item[(V)]There exist constants  $\underline{b}>0$ and $c \in (0,\Delta)$ such that 
	$
	\min_{1 \leq i \leq p, |\theta_i|>c}\zeta_{1,ii}>\underline{b}
	$
	 for all  
     $p=p(n), n \in \mathbb{N}$, where $\zeta_{1,ii}$ is defined in \eqref{eq_var_def}. Here and in the following,  a minimum over the empty set is defined as  $+\infty$.
     \item[(B)] The components of the kernel $h=(h_1 , \ldots , h_p)^\top $ are bounded in absolute value by a constant
 $L_\infty>0$.
     \item[(E)] There exist a constant $\gamma>0$ that such that the  covariances $\zeta_{1,ij}$ defined in \eqref{eq_var_def} satisfy
     \[
        \min _{{1 \leq i<j\leq p~, |\theta_i|\land |\theta_j|\geq \Delta-\gamma}}\text{sign}(\theta_i\theta_j)\zeta_{1,ij}\geq 0
     \]
     
\end{itemize}
\end{assumption}

Assumptions   (V) and (B) are fairly mild and standard in the context of high-dimensional dependence testing via $U$-statistics \citep[see, for example,][]{drttonetal2020,patrick_annals}. Assumption (P) is a technical condition and needed to show that the test keeps its level at the boundary of the hypotheses in \eqref{eq_relevant_hypotheses}, that is $\| \theta \|_{\infty} = \Delta$. It ensures that whether or not and where a gap is detected has negligible impact on the distribution of the vector $U$. It is fulfilled in a variety of scenarios. Examples include situations where no gap can be detected or when there exists exactly one $k$ such that
    \begin{align}
    \label{eq:gap}
          |\theta|_{(k)}-|\theta|_{(k+1)}\geq \max_{j \neq k}|\theta|_{(j)}-|\theta|_{(j+1)}+\sqrt{\log(n)\log(p \lor n)/n}~,
    \end{align}  
i.e. there is a largest gap that can be separated from all other gaps (here $|\theta|_{(1)}\geq \ldots \geq |\theta|_{(p)}$ denote the ordered values of $|\theta|_{1} , \ldots , |\theta|_{p}$). In particular, we prove in  Lemma \ref{Lem:uti_rnm} of the online supplement,  that $\hat k=k$ with high probability in that scenario. Assumption (E) is required for using the extreme value approach instead of the concentration based approach when Algorithm \ref{alg:topk} outputs $\bot$. It can also be weakened to allow up to $q =o(p)$ negative $\zeta_{1,ij}$.
\smallskip

Under the above assumptions we may now construct a test procedure based on the output of Algorithm \ref{alg:HD_test}. We summarize it together with its asymptotic properties in  Theorem \ref{thm:consistency_Ustats_High} below.

\begin{theorem}\label{thm:consistency_Ustats_High}
Let $\log(p)=o(n^{1/5})$, assume that Assumptions (V), (E) and (B) hold.   The test decision $\phi$ of Algorithm \ref{alg:HD_test} defines a consistent and asymptotic level-$\alpha$ test for hypotheses \eqref{eq_rel_hyp_high}. More precisely, 
\begin{enumerate}
    \item[(1)] If  $\|\theta\|_\infty<\Delta-\gamma$ holds for some $\gamma>0$, we have
 $   
       \lim\limits_{n \to \infty} \mathbb{P}_{\theta}(\phi=1) 
      = 0~. 
$ 
    \item[(2)] If  $\|\theta\|_\infty=\Delta$ and Assumption  (P) holds, we have 
$       \lim\limits _{n \to \infty} \mathbb{P}_{\theta}(\phi=1) 
      \leq \alpha~.   
$
    \item[(3)] Assume that $\|\theta\|_\infty>\Delta+\gamma\sqrt{\log(p)/n}$ for sufficiently large $\gamma>0$ and either
    \begin{itemize}
        \item [a)] Algorithm \ref{alg:topk} outputs $\bot$ with high probability or,
        \item [b)] \eqref{eq:gap} is satisfied for some $k \leq \log(p)$~,
    
        \end{itemize}
         holds, then 
$    
       \lim\limits_{n \to \infty} \mathbb{P}_{\theta}(\phi=1) 
      = 1~.
$
\end{enumerate}
Moreover, Assumption (E) may be dropped when using a concentration based  instead of the extreme value based approach in the case where Algorithm \ref{alg:topk} returns $\bot$.
\end{theorem}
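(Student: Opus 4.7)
I would decompose the analysis along the two branches of Algorithm \ref{alg:HD_test}. On the event $A_1$ where P-REL outputs a valid set $\hatEDP$ with $\hat k \leq \log p$, the test compares $\norm{U}_\infty^{\text{DP}} - \Delta$ to the bootstrap quantile $\hat q^*_{1-\alpha}$ obtained from the sub-vector indexed by $\hatEDP$. On the complementary event $A_2$ where P-REL returns $\bot$, the Gumbel-based fallback of Algorithm \ref{alg:Gumbel} (or, for the final ``moreover'' clause, the concentration rule \eqref{eq:hoeffding_test}) is invoked. The three assertions are established by conditioning on $A_1$ and $A_2$ and combining the bounds via the law of total probability.

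\textbf{Type I error control.} Part (1) is a concentration argument: under $\norm{\theta}_\infty < \Delta - \gamma$, Hoeffding's inequality and a union bound yield $\norm{U}_\infty - \norm{\theta}_\infty = O_\PR(\sqrt{\log p/n})$, while the Gaussian privatization noise from the sensitivity bound $2r L_\infty / n$ contributes only $O_\PR(1/n)$. Hence $\norm{U}_\infty^{\text{DP}} - \Delta \leq -\gamma + o_\PR(1)$, and since both the bootstrap quantile on $A_1$ and the Gumbel (or concentration) quantile on $A_2$ are $O_\PR(\sqrt{\log p/n})$, the rejection probability vanishes. Part (2) is the core difficulty. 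On $A_1$ I would start from the directional-differentiability bound
\[
\sqrt{n}(\norm{U}_\infty - \Delta) \leq \sqrt{n}\max_{i \in \hatEDP} \text{sign}(U_i)(U_i - \theta_i) + o_\PR(1),
\]
valid at the boundary $\norm{\theta}_\infty = \Delta$ provided $\hatEDP$ eventually contains the true extremal set. Under Assumption (P), the conditional distribution of $\tilde U = (U_{\hat i_1},\dots,U_{\hat i_{\hat k}})^\top$ given the selection coincides with the corresponding marginal, which unlocks high-dimensional Gaussian approximation for the linear part of the Hoeffding decomposition (valid since $\log \hat k \leq \log \log p = o(n^{1/5})$). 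Combined with the max-norm consistency of $\hat\zeta_1^{\text{DP}}(\hat k)$ from Lemma \ref{VarConv} of the supplement and the consistency of $\text{sign}(U_i U_j)$ for coordinates with $|\theta_i|, |\theta_j| = \Delta > 0$, this shows the bootstrap quantile converges to the $(1-\alpha)$-quantile of the limit. On $A_2$ the argument sketched in the paper applies: Bhatia-Davis bounds each $\Var(Z_i)$ by $L_\infty^2 - (\Delta-\gamma)^2$, Slepian's Lemma together with Assumption (E) yields stochastic dominance by $\sqrt n \max_i Y_i$, and the rescaled maximum of iid Gaussians converges to a Gumbel distribution. If the concentration rule \eqref{eq:hoeffding_test} is substituted, the argument of Part (1) already controls the level and Assumption (E) is not needed.

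\textbf{Consistency (Part (3)).} Write $\norm{U}_\infty^{\text{DP}} = \norm{U}_\infty + O_\PR(1/n)$ and use concentration to get $\norm{U}_\infty = \norm{\theta}_\infty + O_\PR(\sqrt{\log p / n})$. On (3)(b), Lemma \ref{Lem:uti_rnm} of the supplement guarantees $\hat k = k$ with high probability, hence the bootstrap quantile on $A_1$ is of order $O_\PR(\sqrt{\log\log p / n})$. On (3)(a), the Gumbel (or concentration) quantile on $A_2$ is $O(\sqrt{\log p / n})$. Choosing $\gamma$ sufficiently large relative to the constants entering these bounds, the signal $\norm{\theta}_\infty - \Delta > \gamma \sqrt{\log p / n}$ strictly exceeds the threshold $\hat q^*_{1-\alpha}$, so the rejection probability tends to one.

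\textbf{Main obstacle.} The most delicate step is Part (2) on $A_1$: because $\hatEDP$ is a random, data-dependent selection, a naive Gaussian approximation to $\tilde U$ would be contaminated by post-selection bias. Assumption (P) is precisely engineered to neutralize this bias by identifying the conditional law with the marginal, thereby reducing post-selection inference to standard inference on a low-dimensional sub-vector. Once this reduction is in place, the remaining work is technical bookkeeping: propagating the additional Gaussian noise injected into $\hat\zeta_1^{\text{DP}}(\hat k)$, verifying uniform consistency of $\text{sign}(U_i U_j)$ on the extremal set (where the magnitudes are bounded below by $\Delta$), and combining the CCK-type Gaussian approximation with the consistency of the Jackknife estimator in max norm.
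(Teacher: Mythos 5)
Your proposal follows essentially the same route as the paper's proof: the same decomposition into the two branches of Algorithm \ref{alg:HD_test} (valid selection versus $\bot$), reduction of the private to the non-private statistic, use of Assumption (P) to neutralize the post-selection effect, a Chernozhukov--Chetverikov--Kato type Gaussian approximation combined with the max-norm consistency of the (privatized) Jackknife covariance and sign estimates to validate the bootstrap quantile, the Bhatia--Davis/Slepian/Gumbel chain under Assumption (E) on the $\bot$ branch, and Lemma \ref{Lem:uti_rnm} together with a signal-versus-quantile comparison for consistency. The only minor imprecisions, which do not change the argument, are that the paper never requires $\hatEDP$ to contain the true extremal set (conditioning via (P) suffices, and your inequality already holds because the coordinate attaining $\norm{U}_\infty$ lies in the selected set), and that the bootstrap quantile need only dominate, rather than converge to, the quantile of the limiting statistic for the level bound in part (2).
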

Our next result provides privacy guarantees for Algorithm \ref{alg:HD_test}.

\begin{theorem}\label{thm:privacy_test}
    Algorithm \ref{alg:HD_test} satisfies $\delta-$approximate-$\rho$-zCDP.
\end{theorem}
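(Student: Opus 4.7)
The plan is to decompose Algorithm~\ref{alg:HD_test} into its data-dependent building blocks, show that each consumes at most a fraction of the budget, and then invoke the composition and post-processing properties of $\delta$-approximate-$\rho$-zCDP (Lemma~8.2 of \cite{bun2016concentrated}). Every use of the sensitive data $X$ in the algorithm happens through one of three mechanisms: the call to \textproc{P-REL}, the computation of the bootstrap quantile inside \textproc{HQU}, and the Gaussian release of $\|U\|_\infty$; in the alternative branch the second and third are replaced by a single call to \textproc{P-GUMBEL-TEST}. The computation of the raw U-statistic vector $|U|$ is part of the input to these mechanisms, not a separate release.

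First I would argue that the call \textproc{P-REL}$(|U|,\delta,\rho/3,t)$ is $\delta$-approximate-$\rho/3$-zCDP by Theorem~\ref{thm_topk}; all subsequent branching, including the test $\hat k \leq \log(p)$ and the substitution of the extremal indices into downstream computations, is post-processing of its privatized output and hence free. Next, along the main branch, the bootstrap procedure \textproc{HQU} accesses the data only through the privatized covariance estimator $\hat\zeta_1(\hat k)$, released via the Gaussian mechanism with $\ell_2$-sensitivity $\Delta_2\hat\zeta_1(\hat k)$ and budget $\rho/3$; by Lemma~\ref{Lem_steinke_gauss} this is $\rho/3$-zCDP, and the resulting quantile $\hat q_{1-\alpha}^*$ is then a deterministic (post-processed) function of $\hat\zeta_1^{\text{DP}}$ and $\hat S$. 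Finally, $\|U\|_\infty^{\text{DP}} = \|U\|_\infty + Z$ has $\ell_2$-sensitivity bounded by $2rL_\infty/n$, so at budget $\rho/3$ this is again $\rho/3$-zCDP by Lemma~\ref{Lem_steinke_gauss}. Composition across these three releases yields $\delta$-approximate-$\rho$-zCDP on the main branch. On the branch reached when \textproc{P-REL} outputs $\bot$, we compose its $\delta$-approximate-$\rho/3$-zCDP guarantee with the $2\rho/3$-zCDP guarantee of \textproc{P-GUMBEL-TEST} (Algorithm~\ref{alg:Gumbel}), again obtaining $\delta$-approximate-$\rho$-zCDP.

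The main obstacle I anticipate is not the analysis of any individual mechanism but making the adaptive branching rigorous: one must argue that the choice between the bootstrap test and the Gumbel test, and the value of $\hat k$ used to restrict $\hat\zeta_1$, are determined solely by the already-privatized output of \textproc{P-REL}. Because the branching condition $\hat{\mathcal E}^{\text{DP}}=\{\hat i_1,\ldots,\hat i_{\hat k}\}$ versus $\bot$ depends only on the private output and no further access to $X$ is made to decide it, it falls under post-processing, so the plain composition theorem suffices and no adaptive composition bound is required. The final test decision $\text{dec}$ and the reported triple $\{\hat i_1,\ldots,\hat i_{\hat k}\},\|U\|_\infty^{\text{DP}},\hat\zeta_1^{\text{DP}}$ are then themselves post-processings of the privatized quantities, so the overall guarantee $\delta$-approximate-$\rho$-zCDP is preserved.
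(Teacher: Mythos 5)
There is a genuine gap in your argument, and it concerns the sign matrix $\hat S=(\text{sign}(U_iU_j))_{i,j}$ that is passed into \textproc{HQU} in step~5 of Algorithm~\ref{alg:HD_test}. You write that along the main branch \textproc{HQU} ``accesses the data only through the privatized covariance estimator,'' and then describe $\hat q_{1-\alpha}^*$ as a ``post-processed function of $\hat\zeta_1^{\text{DP}}$ and $\hat S$.'' But $\hat S$ is \emph{not} a post-processed private output: it is computed directly from the raw $U$-statistics and is therefore a fresh data access. Worse, inside \textproc{HQU} the Gaussian mechanism is applied to the Hadamard product $\hat\zeta_1^{S}=\hat S\odot\hat\zeta_1(\hat k)$, not to $\hat\zeta_1(\hat k)$ itself, while the noise scale is calibrated using the sensitivity $\Delta_2\hat\zeta_1$ of the unadjusted covariance. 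The sign-adjusted matrix $\hat\zeta_1^{S}$ does not have the same sensitivity in general: when one of the coordinates $\theta_i$ is near zero, $\text{sign}(U_i)$ can flip between neighboring databases and the corresponding entry of $\hat\zeta_1^{S}$ changes by an amount of constant order. This is exactly the difficulty that the paper flags in Section~\ref{sec312} (item 3) as a central obstacle, and your proof simply does not address it — Lemma~\ref{Lem_steinke_gauss} does not apply with the stated sensitivity, and so the claimed $\rho/3$-zCDP of that step is unproven.

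The paper resolves this by observing that the sign-flip event is absorbed into the propose-test-release ``bad'' event. Concretely: Algorithm~\ref{alg:topk} only outputs a nontrivial extremal set when the (noisy) gap test $\hat q_{\hat k}>t=4rL_\infty/n$ passes. On the good event $E=\{\hat q_{\hat k}\le q_{\hat k}\}$ (which has probability $\ge 1-\delta$ by construction), the true gap satisfies $q_{\hat k}>t$, which forces every selected coordinate to satisfy $|U|_{(i)}\ge|U|_{(\hat k)}>2rL_\infty/n$, strictly larger than the $\ell_\infty$ sensitivity of a single $U_i$. Hence on $E$ no sign in $\hat S$ restricted to the selected coordinates can change between neighbors, $\hat\zeta_1^{S}$ inherits the sensitivity of $\hat\zeta_1(\hat k)$, and \textproc{Gausscov} is $(\rho/3)$-zCDP conditionally on $E$. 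Letting $S$ denote the sign-flip event, one shows $S\subseteq E^c$ so $\P(S)\le\delta$, which is precisely what the $\delta$-approximate relaxation buys you. Without this containment argument, the composition step in your proof has no valid first factor, so the theorem does not follow. You should also note that this is the true source of the ``$\delta$-approximate'' qualifier in the conclusion: your write-up makes it appear to stem solely from Theorem~\ref{thm_topk}, whereas in fact the same bad event is doing double duty by simultaneously protecting the gap test and the sign matrix.
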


\begin{remark}\label{rem:assumptions_discuss}
~~
{\rm \begin{itemize}
\item[1)] The results can be extended in a straightforward manner to statistics $T$ that are approximated by a $U$-statistic in the sense that
$   T=U+R_n, $
where $R_n \in \R^p$ is a remainder that converges to $0$  sufficiently fast in the maximum-norm, that is $\| R_n \|_\infty = o_\PR ( \sqrt{\log(p)}/n ) $. Prominent examples, where this is possible, are V-statistics, certain functionals of the Kaplan-Meier estimator \citep[see][]{Gijbels:Veraverbeke:1991} and  Spearman's $\rho$.
\item[2)] In Algorithm~\ref{alg:HD_test}, we allocated the privacy budget equally across all sub-procedures. Our simulations indicate that, when a largest gap of size $O(1/n)$ is present, correctly identifying it is the key step of the methodology. Therefore, one might consider allocating a larger portion of the privacy budget~$\rho$ to this step.

\item[3)]The methodology can be extended to $U$-statistics with unbounded kernels by appropriate truncation methods such as discussed in \cite{NEURIPS2024_290848c8}.
\end{itemize}
}
\end{remark}

\begin{remark}
 {  \rm 
 A careful  inspection of the proof of Theorem \ref{thm:consistency_Ustats_High} shows that 
 there exists parameters $\theta \in \mathbb{R}^p$ with $\| \theta\|_\infty = \Delta $ such that there is equality in part (2). For a prominent  example,  consider the case  where the components of $\theta$ satisfy for some $\gamma>0$ 
\begin{align*}
|\theta_i| \quad 
\begin{cases}
=\Delta & (1 \leq i \leq s), \\[6pt]
\leq \tfrac{\Delta}{2+\gamma} & (s < i \leq p).
\end{cases}
\end{align*}
}
\end{remark}

\begin{remark}
    {\rm If the null hypothesis in \eqref{eq_rel_hyp_high} is rejected, the next step in the statistical analysis  is to privately identify the relevant coordinates, i.e. the set
$$
\mathcal R:=\left\{i\in \{1,\hdots, p\} \,\mid \, |\theta_{i}|>\Delta\right\}~.
$$
Although this is not the main objective of this paper, we briefly discuss a first solution of this problem.
For this purpose we use Algorithm \ref{alg:topk} with  the queries
$$
q_i=\max\{|U|_{(i)}, \Delta\}-\max\{|U|_{(i+1)},\Delta\}~~~~~~(i=1, \ldots , p-1),
$$
and denote the output by $\big (\hat k_{\cal R}, \hat {\cal R}^{\rm DP} \big ) $. 
With this adjustment, the noisy max  Algorithm \ref{alg:rep_noisy_max} will output an index $\hat k_{\cal R} $ 
corresponding to a component of $\theta$ with $|\theta|_{(\hat k_{\cal  R} )}>  \Delta$ with high  probability, since all gaps corresponding to components  with   $| \theta_i | \leq \Delta$ are set to zero (asymptotically). We hence obtain that either $\hat{\mathcal{R}}^{\rm DP}\subset \mathcal R$ or $\hat{\mathcal{R}}^{\rm DP} = \mathcal R$. In the former case it is of course possible to recover the missing coordinates via an offline SVT (exponential mechanism) until an index exceeding $\Delta$ is found.
This modification, however, introduces a trade-off. Suppose the largest gap occurs at an index $\hat k_{\cal R} $  separating the set $\mathcal R$ of relevant coordinates  and the set ${\cal R}^c$ of non-relevant coordinates, that is $|\theta |_{(\hat k_{\cal R})} > \Delta$, and $|\theta |_{(\hat k_{\cal R})} \leq \Delta$. In this case, the gap shrinks from its original size $|U|_{(\hat k_{\cal  R})}-|U|_{(\hat k_{\cal  R}+1)}$ to $|U|_{(\hat k_{\cal  R})}-\Delta$. Consequently, the probability of correctly identifying the gap decreases. To obtain the same detection accuracy one therefore has to allocate a larger portion of the privacy budget to Algorithm \ref{alg:topk}.
}
\end{remark}

\section{Finite sample properties}\label{sec:experiments}

  \def\theequation{5.\arabic{equation}}	
	\setcounter{equation}{0}

In this section we investigate the finite sample properties  of the proposed procedure by means of a simulation study and illustrate its application in two data examples. Throughout this section we concentrate   on the problem of detecting relevant dependencies as discussed in the introduction and in Example \ref{ex1}.

\subsection{Simulation study }
\label{simul}
We  first illustrate the performance of the set estimation in Algorithm \ref{alg:rep_noisy_max} in different scenarios. Later, we will also further investigate how these properties propagate into the testing procedure. 
\smallskip

\textbf{Experimental Setup:} 
We  generate data from 
a $d$ dimensional normal distribution with covariance $\Gamma$ to be specified later, i.e.
\begin{align}
    \label{sim1}    X_1,...,X_n \sim \mathcal{N}_d(0,\Gamma)
\end{align}
and consider as measure of (monotone) dependence the classical Kendall's $\tau$ given by
\[
    \tau=(\tau_{ij})_{1 \leq i<j \leq d}=\Big(\E[\text{sign}(X_{1i}-X_{2i})\text{sign}(X_{1j}-X_{2j})]\Big)_{1 \leq i<j\leq p}~.
\]
An unbiased estimator of $\tau_{ij}$ is given by an  $U$-statistic of degree $r=2$
\[
\hat \tau_{ij}=  
\hat \tau_{ij} (X) =  
\frac{2}{n(n-1)}\sum_{1\leq k<l\leq n}\text{sign}(X_{ki}-X_{li})\, \text{sign}(X_{kj}-X_{lj})~,
\]
with corresponding kernel 
$$
h_{ij}(x_1,x_2)=   \tilde h ( x_{1i},x_{1j},x_{2i},x_{2j}) = 
\text{sign}(x_{1i}-x_{2i})\text{sign}(x_{1j}-x_{2j})~.
$$
The vector of $U$-statistics  is then defined by
$U=\text{vech} \big ( ( \hat{\tau}_{ij} )_{1 \leq  i < j \leq  p}  \big )$  with sensitivity 
\begin{equation*}
    \big | \| \text{vech} (\hat\tau (X) \|_\infty  - \| \text{vech} (\hat\tau (X'))\|_\infty \big |\leq  \frac{4}{n}~,
\end{equation*}
i.e. $L_\infty =1$. Therefore, by Lemma \ref{Lem_steinke_gauss}, we have
\[
    \norm{\hat \tau}_\infty^{\text{DP}}=\norm{\hat \tau}_\infty + Y
\]
with $Y\sim \mathcal N(0,\frac{8}{n^2\rho})$ yields a $\rho-$zCDP private estimator of $\|\theta \|_\infty$.
\smallskip

\textbf{Simulation setup:}  As sample size we choose $n=250,500$ and $1000$, while the dimension $d $ is $\lceil \sqrt{2n}\rceil $ and $n$ corresponding to a moderate ($p \approx n$)  and high-dimensional ($p\approx n^2/2$) case, respectively. The nominal level for the test is chosen as $\alpha =0.05$ and $B=500$ replications are used to calculate the critical values. For the  parameter $\rho$ we choose $\rho = 0.1, 0.25$ and $1$ ranging from  strict to lax privacy. We repeated the experiments over $500$ simulation runs.
We consider Kendall's $\tau$  matrices $\mathcal T=(\tau_{ij})_{i,j=1, \ldots , d}$, and the covariance matrix $\Gamma = (\Gamma_{ij} )_{i,j=1, \ldots , d} $ in \eqref{sim1} is then obtained  by means of the formula  
$$
 \Gamma_{ij} = \sin \Big ( \frac{\pi }{2} 
     \tau_{ij} \Big )~,
     $$      
     which holds for all elliptical distributions with continuous margins. We now consider  two \textbf{F}avorable settings, for 
       which it is easy to check that condition  \eqref{eq:gap} is satisfied such that Theorem \ref{thm:consistency_Ustats_High}  is applicable. Here we expect a good performance of the test defined by Algorithm \ref{alg:HD_test}. Further simulation results can be found in Section \ref{appadditionalsim}. There we study the robustness of our approach  in two \textbf{U}nfavorable settings,  for which the performance of the test cannot be predicted by theory, and demonstrate that the test has a reasonable performance in such cases as well. The two favorable designs are defined as follows.
\begin{itemize}
    \item [\textbf{F1)}] A dense signal with $\norm{\text{vech}(\mathcal T)}_\infty=0.5$ on $p/2$ coordinates, i.e.
    \begin{equation}
    \label{taumatricesF1}
        \mathcal T=
        \mathbf{I}_d
        +0.5\sum_{1\leq i<j\leq \lfloor d/\sqrt{2}\rfloor} (e_ie_j^\top + e_je_i^\top)~,
        \end{equation}
    where $e_i \in \mathbb{R}^d$ denotes the $i$th unit vector and $\mathbf{I}_d$ is the $ d \times d$ identity matrix.
    \item [\textbf{F2)}] A sparse signal with $\norm{\text{vech}(\mathcal T)}_\infty=0.5$ on only three coordinates, i.e.
    \begin{equation}
    \label{taumatricesF2}
            \mathcal T=
            \mathbf{I}_d +0.5(e_1e_2^\top + e_2e_1^\top)+0.5(e_2e_3^\top + e_3e_2^\top)+0.5(e_1e_3^\top + e_3e_1^\top)
    \end{equation}
\end{itemize}

We will first  explain  how to read the figures. Note that we display the rejection probabilities for different values of the threshold $\Delta$, which is decreasing from the left to the right. The horizontal red dotted line corresponds to the nominal level $\alpha=0.05$, while the vertical red dotted line demarcates our choice of $\norm{\text{vech}(\mathcal T)}_\infty=0.5$. The values for the threshold  $\Delta$ in the hypotheses \eqref{eq_relevant_hypotheses} are displayed on the $x$-Axis (in decreasing order), such that the right part of the figures correspond to the alternative and the left to the null hypothesis. Consequently, it is desirable that the rejection curves converge to 1 in the top right quadrant and that they are close to $0$ in the bottom left quadrant, crossing precisely at the intersection of the two red lines.
\smallskip

\textbf{Comparison to Hoeffding Approach:}
To the best of our knowledge there exists no other differentially private method that can test hypotheses of the form \eqref{eq_relevant_hypotheses} in moderate or high-dimensional settings. To demonstrate the improvement of the test defined by Algorithm \ref{alg:HD_test}, we thus begin with a quick comparison of our method  implemented in Algorithm \ref{alg:HD_test} (see Figure \ref{fig:power_curves_grid:a}) to 
the concentration-based baseline we described in Section \ref{sec:hoeffding} (see Figure \ref{fig:power_curves_grid:b}) that already outperforms more naive procedures like the Bonferroni correction and related procedures. Across all simulated settings our procedure attains strong empirical power close or equal to one at a signal size for which the rejection rate of concentration-based method has not even exceeded the nominal level $\alpha$. 
\begin{figure}[ht]
\centering

\begin{minipage}{0.4\linewidth}
\centering
\includegraphics[width=\linewidth]{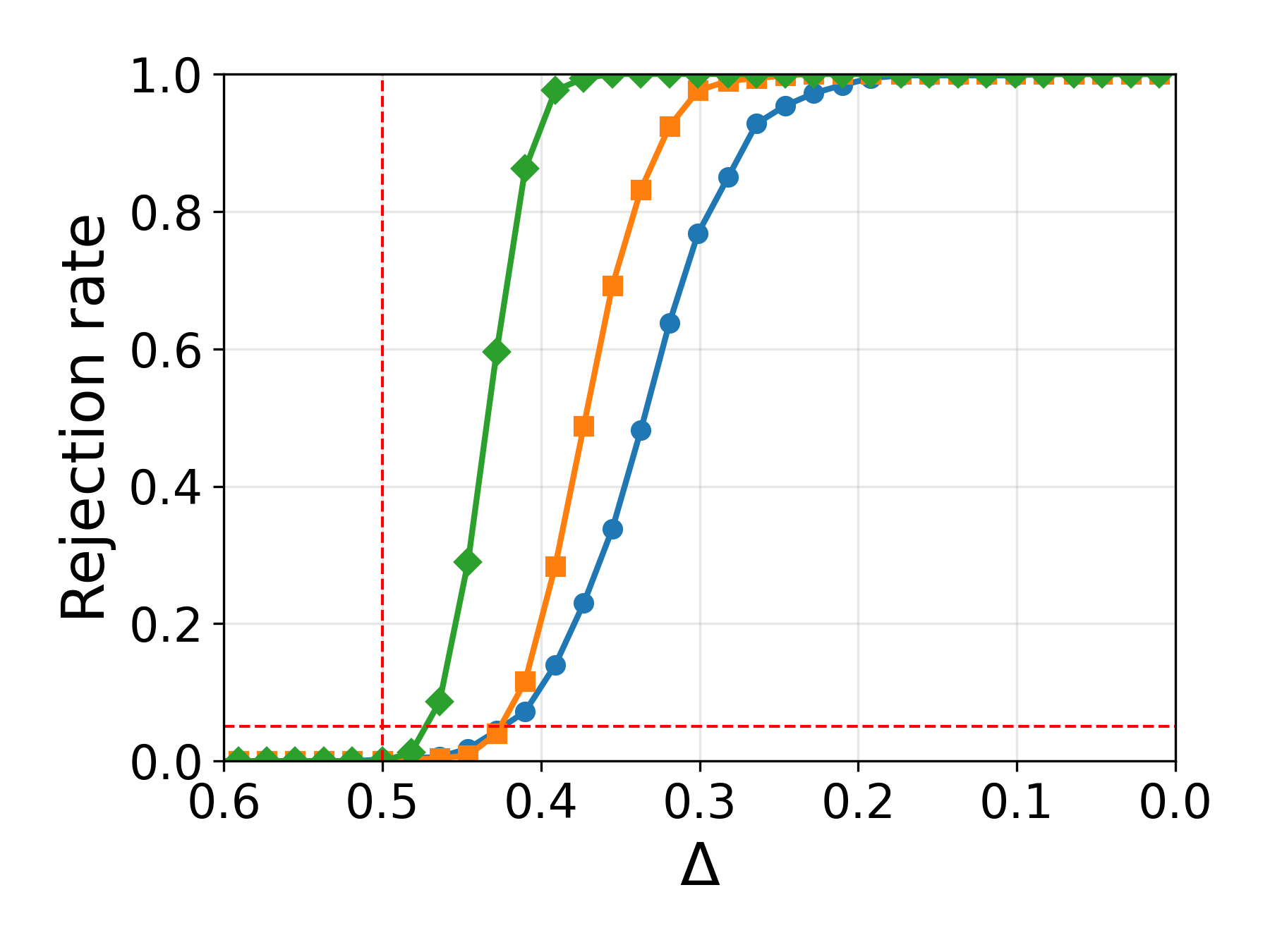}
\vspace{-0.6cm}
\subcaption{Algorithm \ref{alg:HD_test}}
\label{fig:power_curves_grid:a}
\end{minipage}
\hspace{0.03\linewidth}
\begin{minipage}{0.4\linewidth}
\centering
\includegraphics[width=\linewidth]{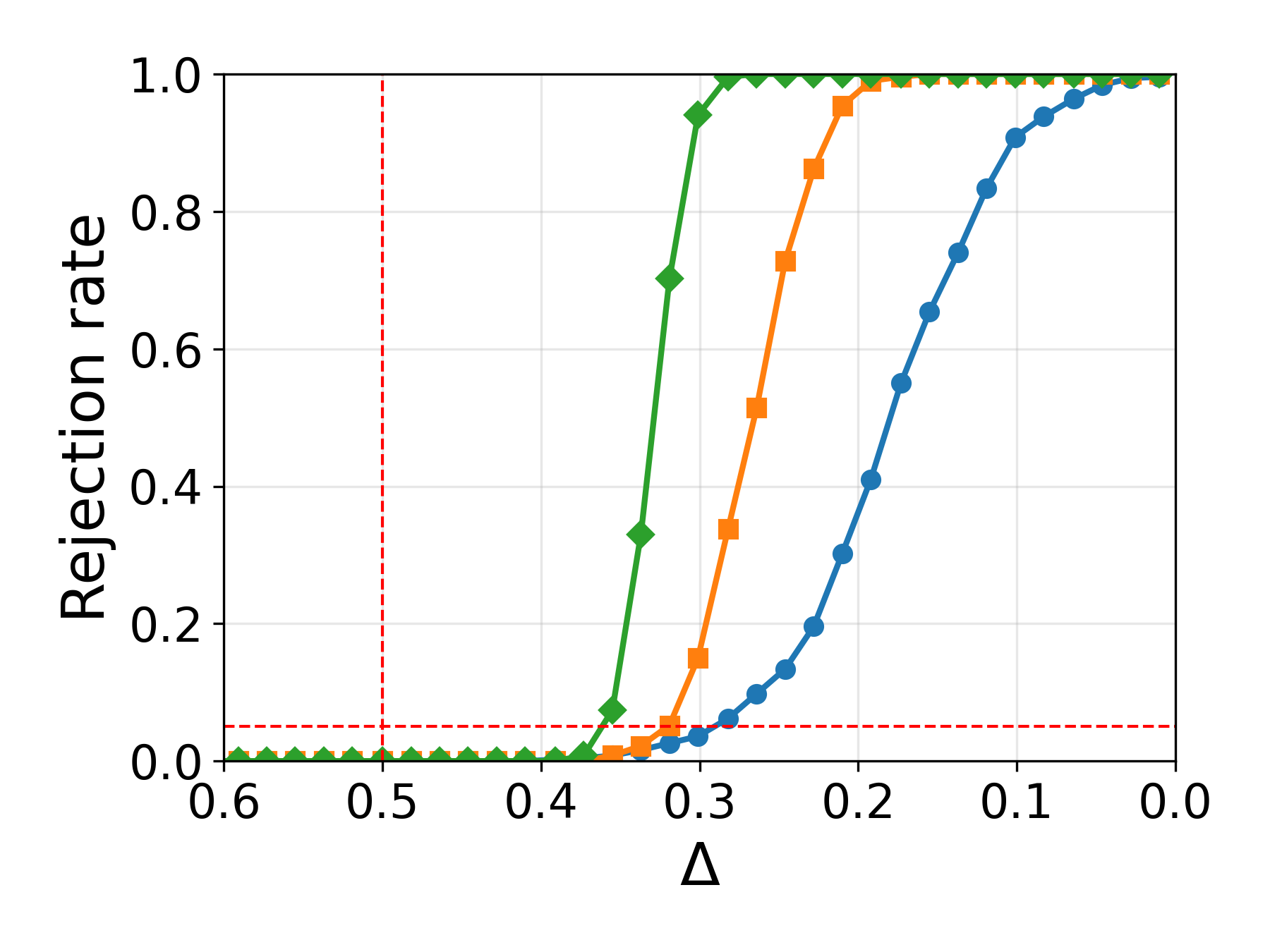}
\vspace{-0.6cm}
\subcaption{Hoeffding based test \eqref{eq:hoeffding_test}}
\label{fig:power_curves_grid:b}
\end{minipage}

\vspace{-0.2cm}

\makebox[\linewidth][c]{%
  \includegraphics[width=0.6\linewidth]{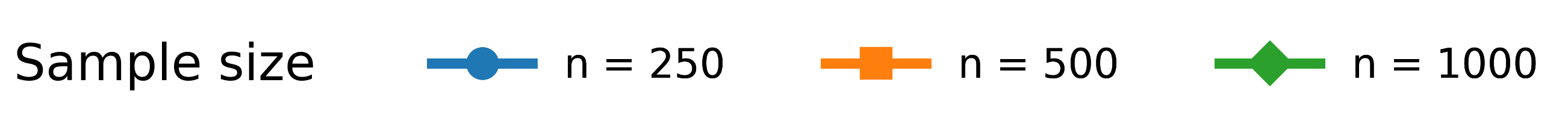}
}
\vspace{-0.6cm}
\caption{\it Empirical rejection probabilities of the test defined by Algorithm \ref{alg:HD_test} and the test based on a concentration inequality for privacy parameters $\rho=0.1$ in setting \textbf{F1)} with $n \in \{250,500,1000\}$ and $p=d(d-1)/2$ for $d=\lceil\sqrt{2n}\rceil$ (moderate dimensional regime).}
\label{fig:power_curves_grid_con}
\end{figure}

\textbf{Performance in the moderate dimensional regime:}
In Figure  \ref{fig:power_curves_grid} we display rejection probabilities of the test defined by  Algorithm \ref{alg:HD_test}  in models \textbf{F1})  and \textbf{F2}) for the moderate dimensional regime. 
We observe that the test keeps its size in all settings under consideration with a type I error $\alpha$ quickly decaying to 0 for $\Delta$ larger than $0.5$. For larger sample sizes - where the gap can be detected more reliably - we see that the test approximates the nominal level more accurate.
Regarding the rejection rate under the alternative we observe that the proposed method is able to detect both dense and sparse correlations well, with a detection boundary that quickly gets sharper as the sample size increases. We further observe that the privacy constraints impact the power most notably for the lower sample size $n=250$. Here the gap is detected less reliably in the strict privacy setting. As a consequence the test then defaults to the Gumbel approximation, leading to lower detection rates. 
\begin{figure}[H]
\centering

\begin{tabular}{@{}ccc@{}}
\includegraphics[width=0.3\linewidth]{figures/power_merged_eps0.1_d45_M2.png} &
\includegraphics[width=0.3\linewidth]{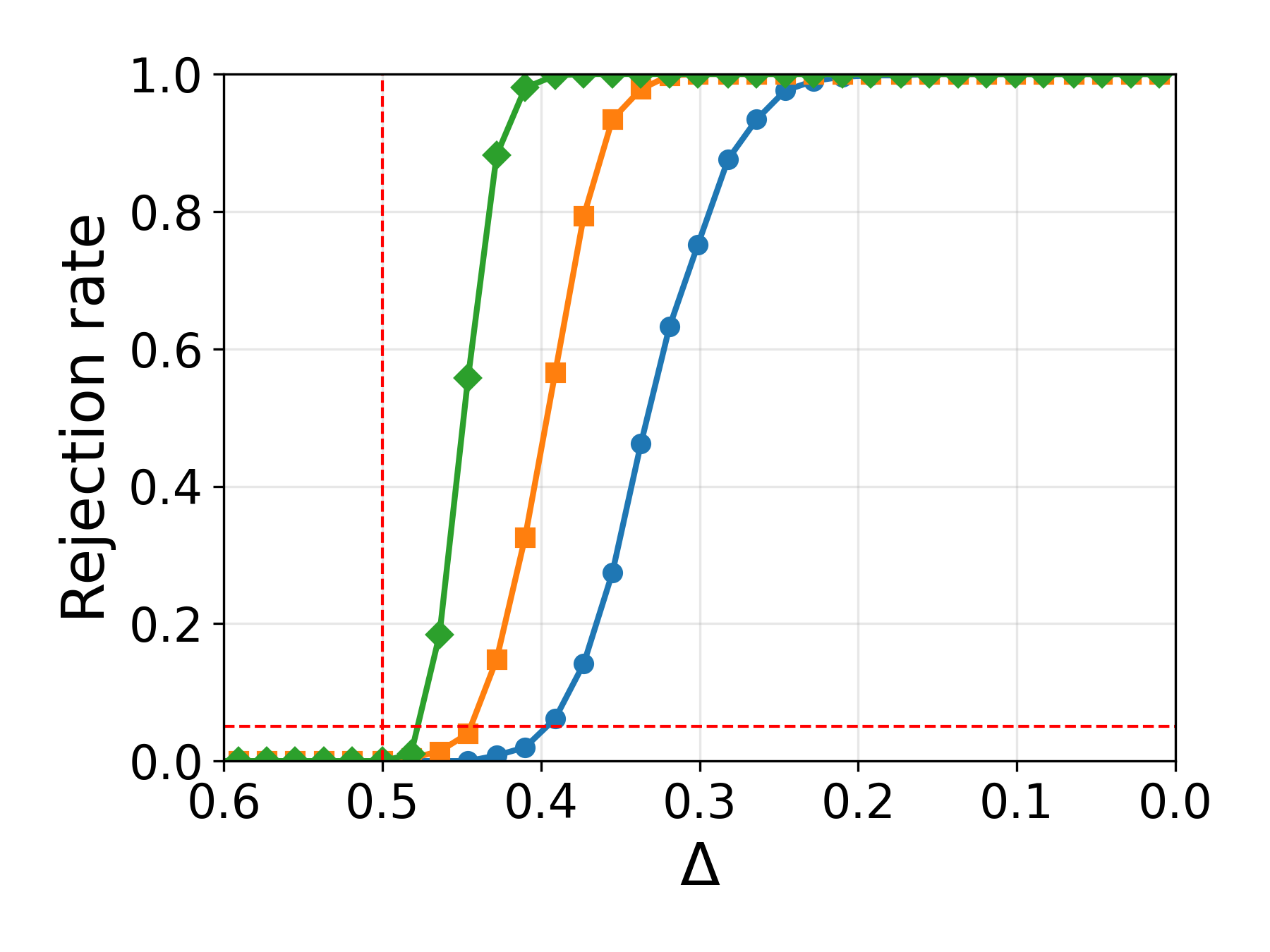} &
\includegraphics[width=0.3\linewidth]{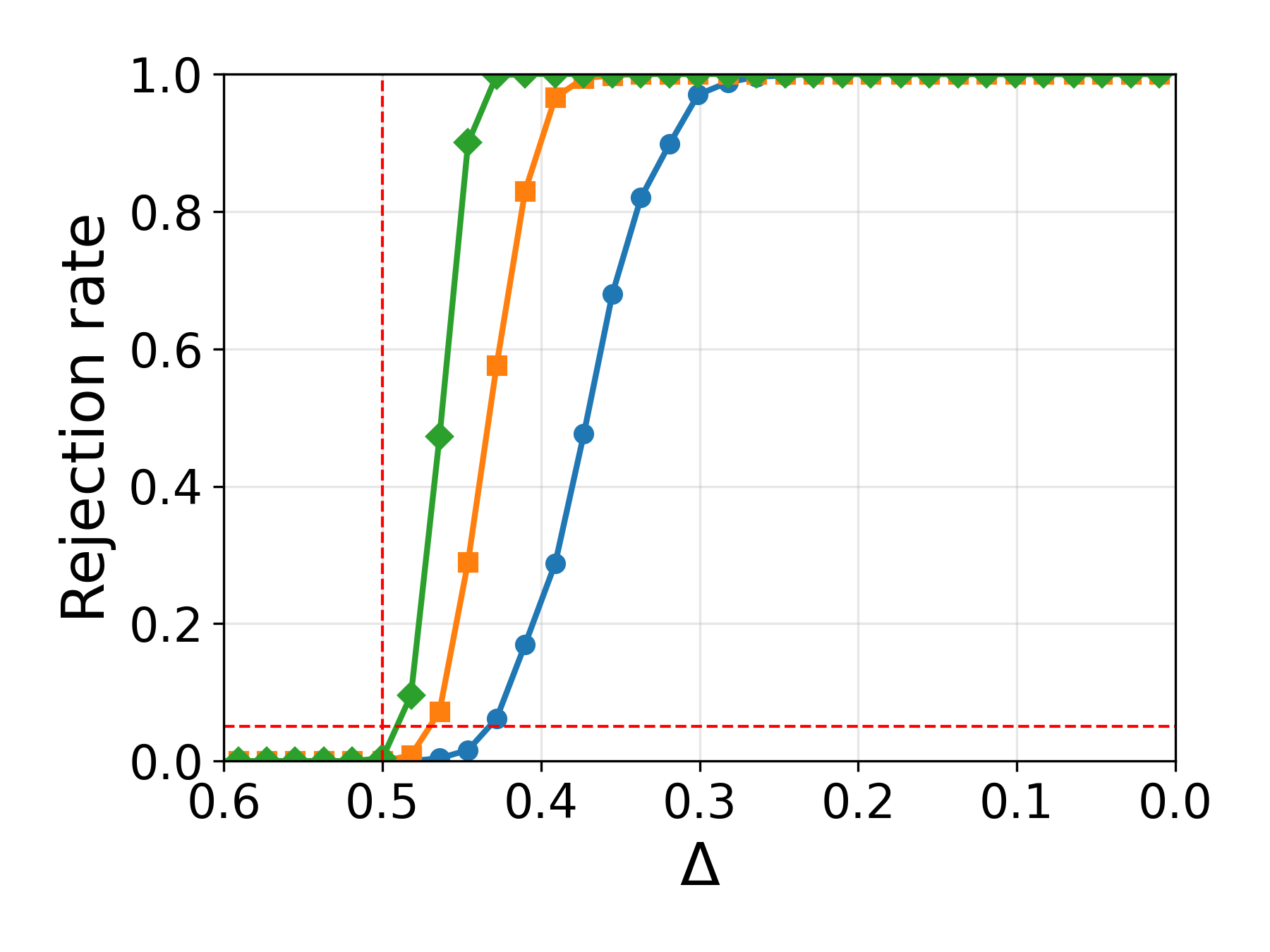} \\[-0.52cm]

\includegraphics[width=0.3\linewidth]{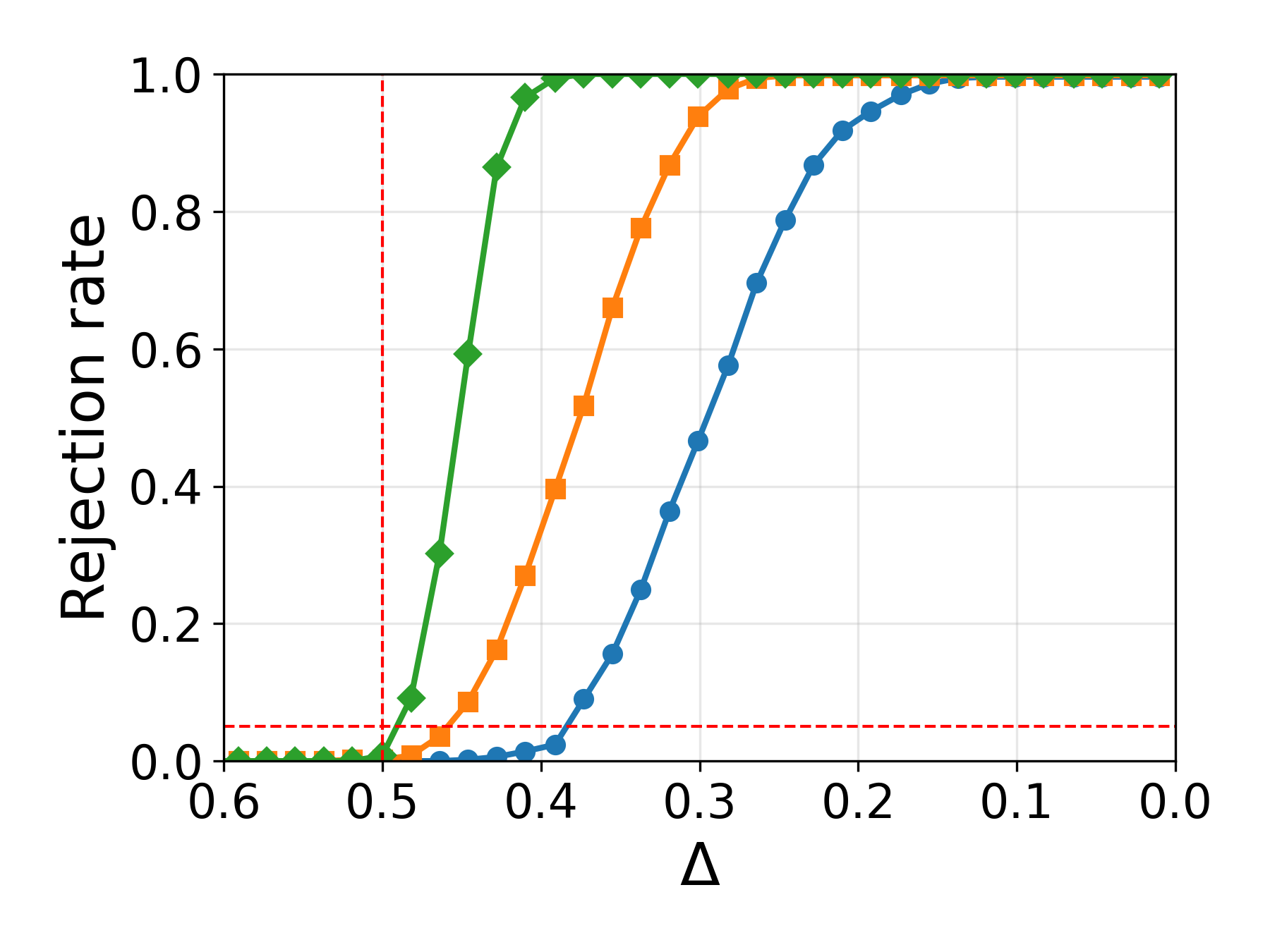} &
\includegraphics[width=0.3\linewidth]{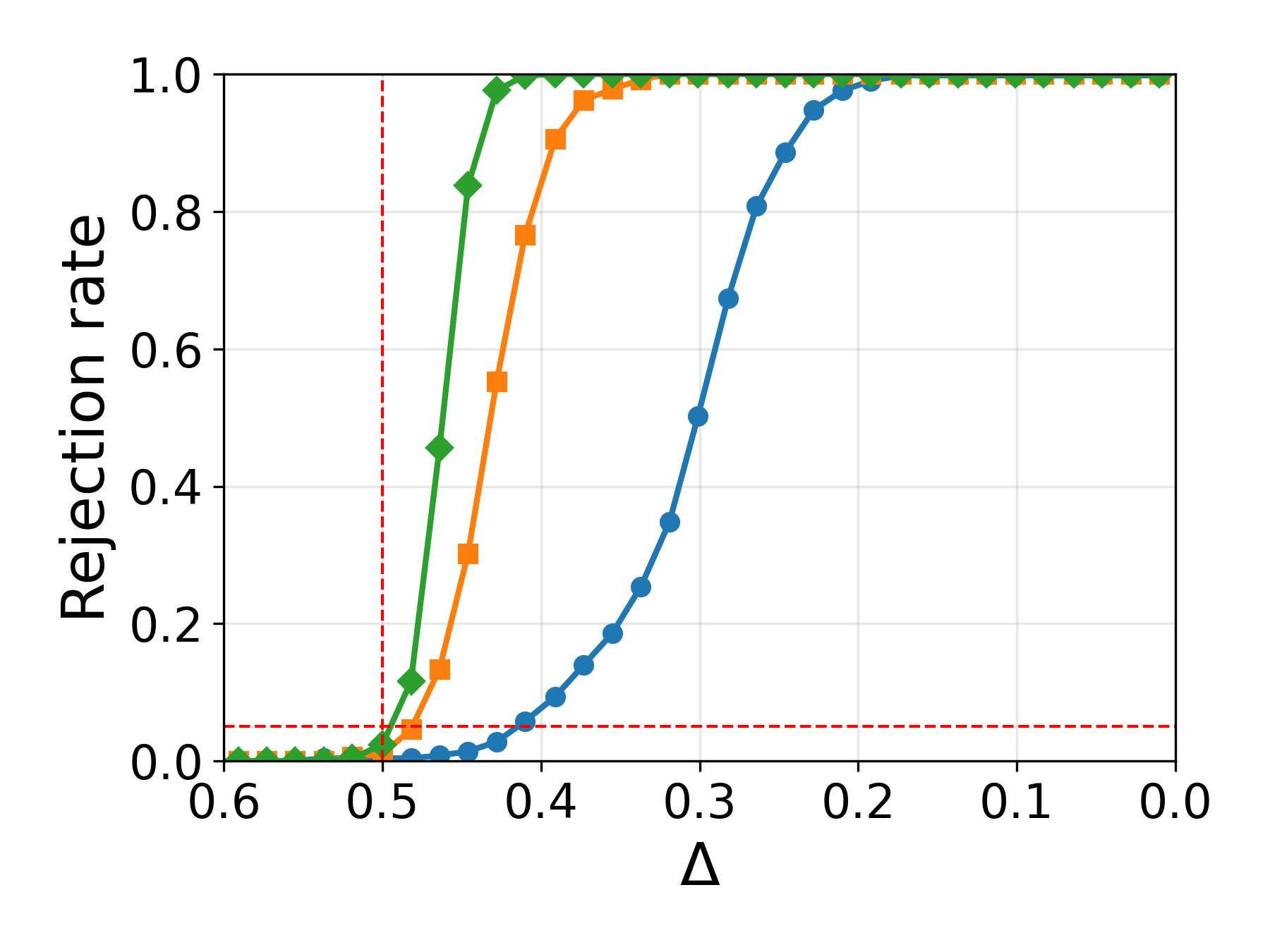} &
\includegraphics[width=0.3\linewidth]{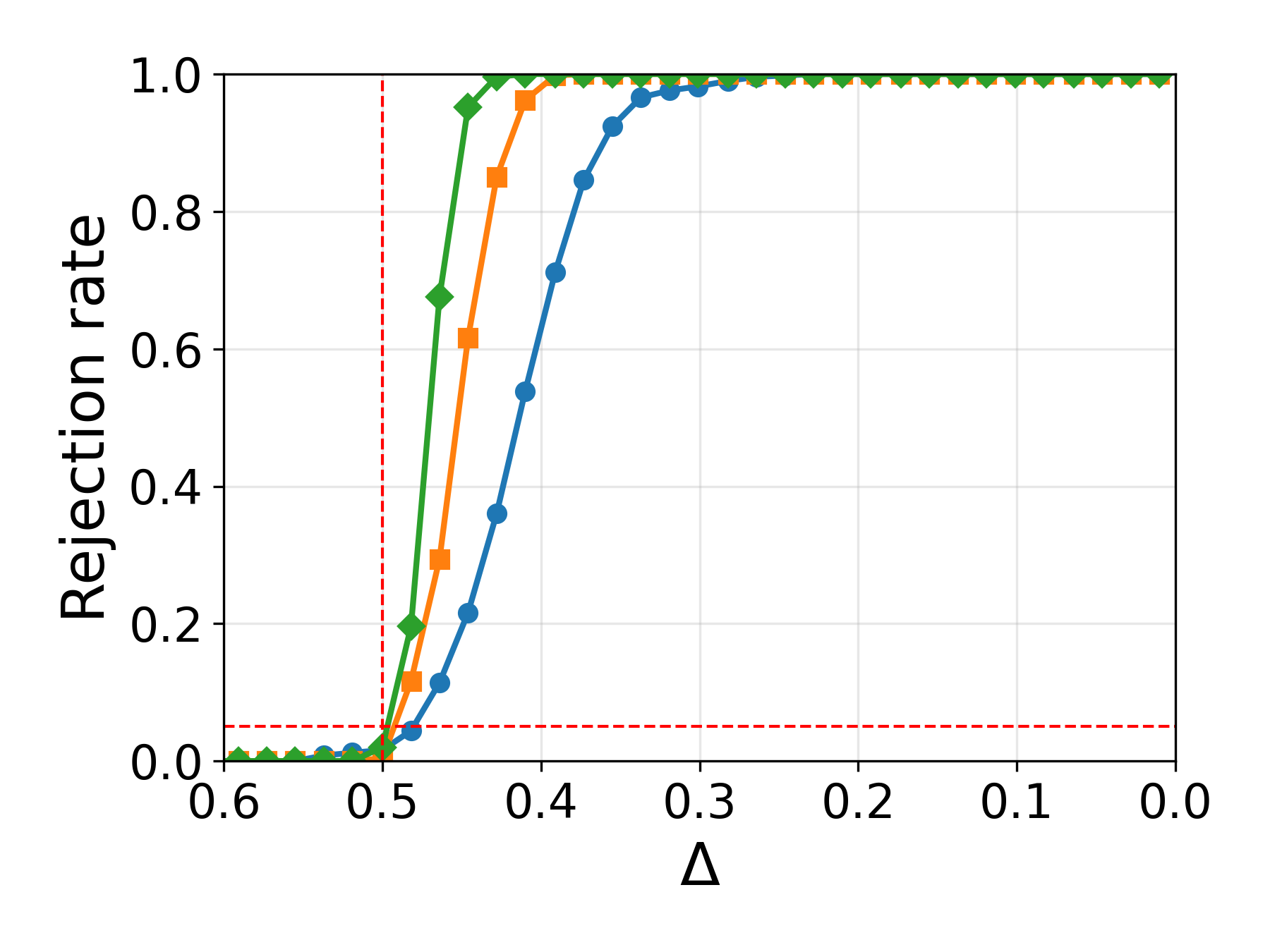}
\end{tabular}

\vspace{-0.3cm}

\makebox[\linewidth][c]{%
  \includegraphics[width=0.6\linewidth]{figures/legend_sample_size_left.png}
}
\vspace{-0.6cm}
\caption{\it Empirical rejection probabilities of the test defined by Algorithm \ref{alg:HD_test} for different privacy parameters $\rho=0.1,0.25,1$ and models \textbf{F1)} (first row) and \textbf{F2)} (second row) with $n \in \{250,500,1000\}$, $p=d(d-1)/2\approx n$ with $d=\lceil\sqrt{2n}\rceil$ (moderate dimensional regime).}
\label{fig:power_curves_grid}
\end{figure}
\begin{figure}[H]
\centering

\begin{tabular}{@{}ccc@{}}
\includegraphics[width=0.3\linewidth]{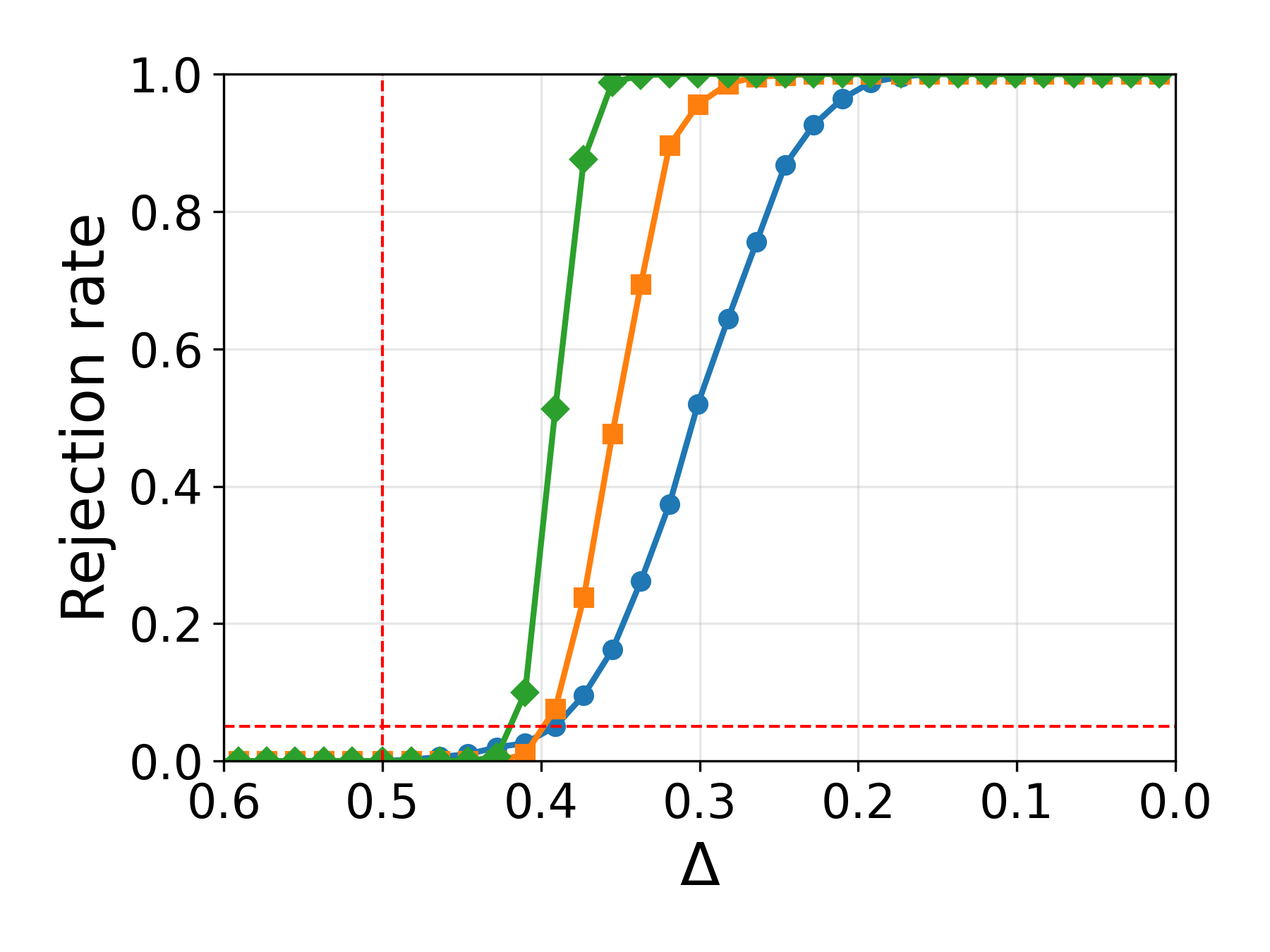} &
\includegraphics[width=0.3\linewidth]{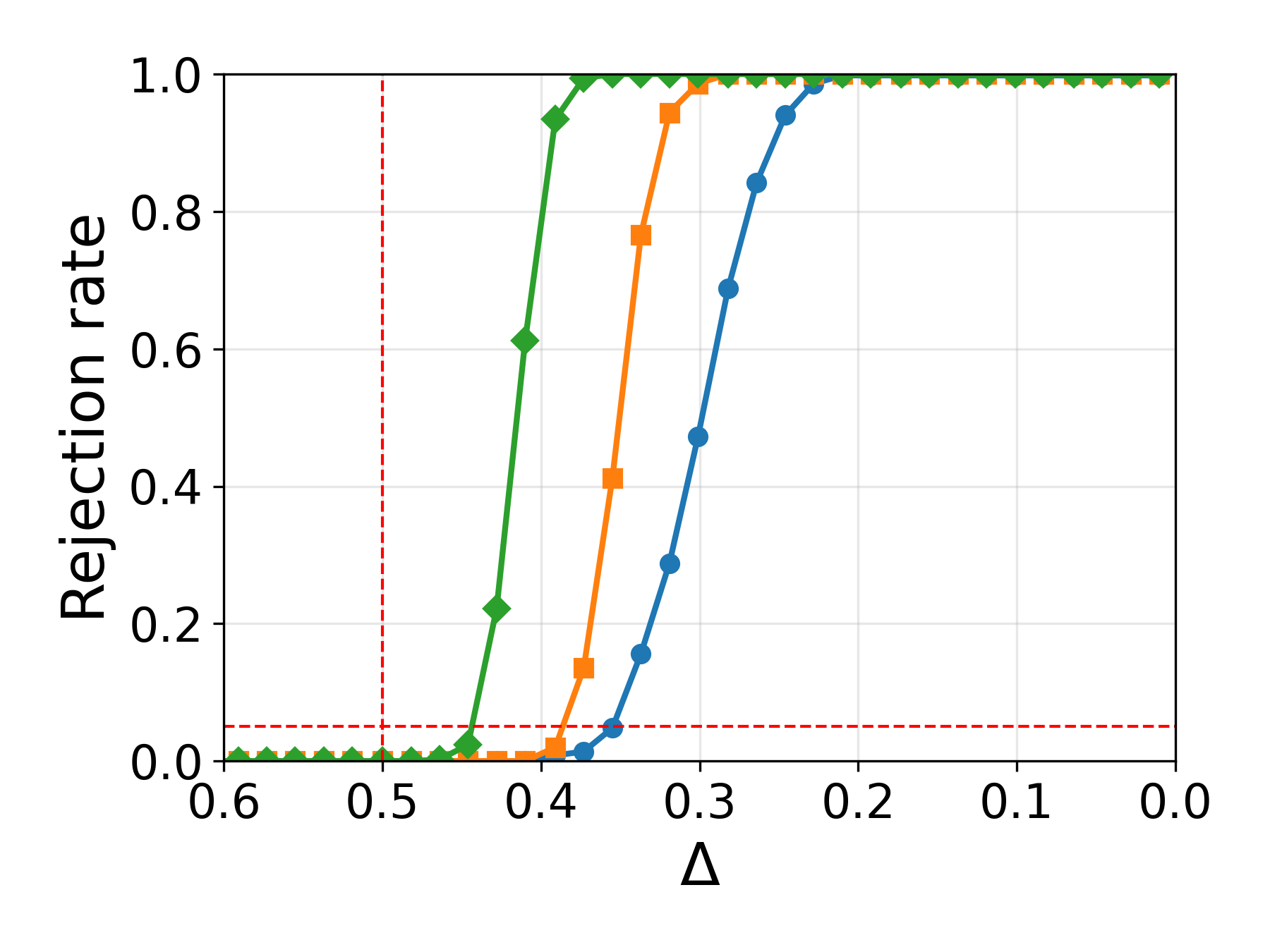} &
\includegraphics[width=0.3\linewidth]{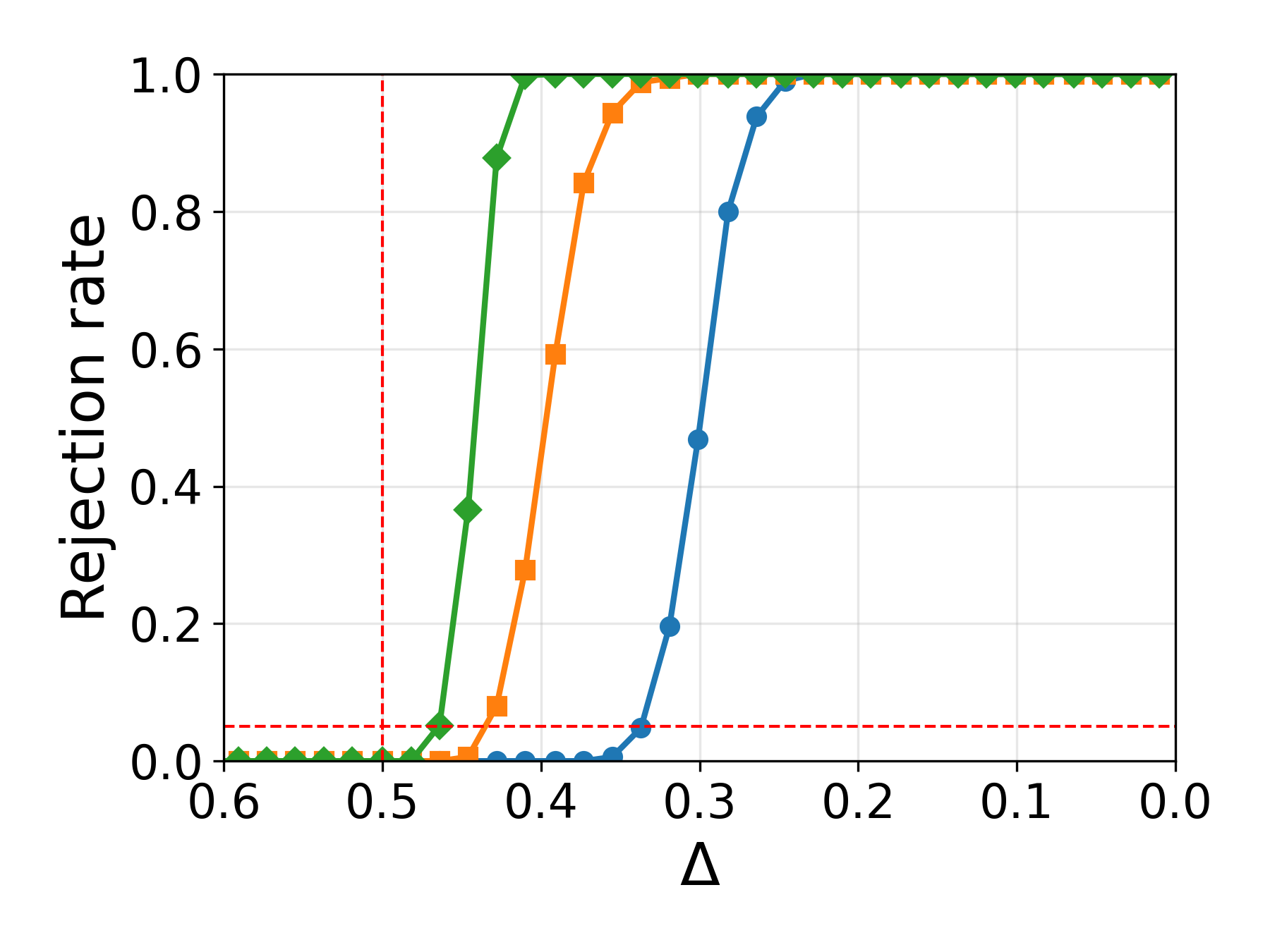} \\[-0.52cm]

\includegraphics[width=0.3\linewidth]{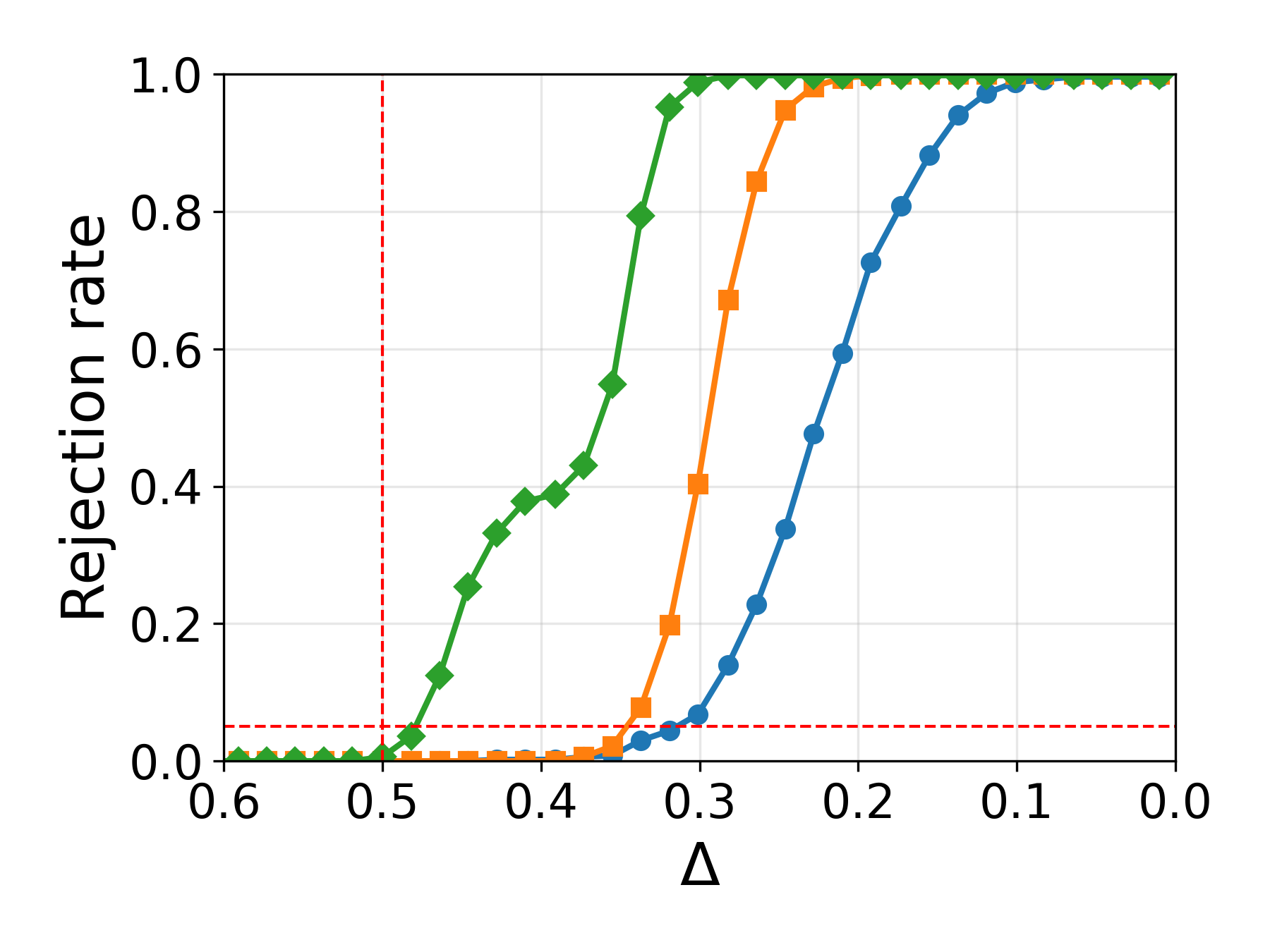} &
\includegraphics[width=0.3\linewidth]{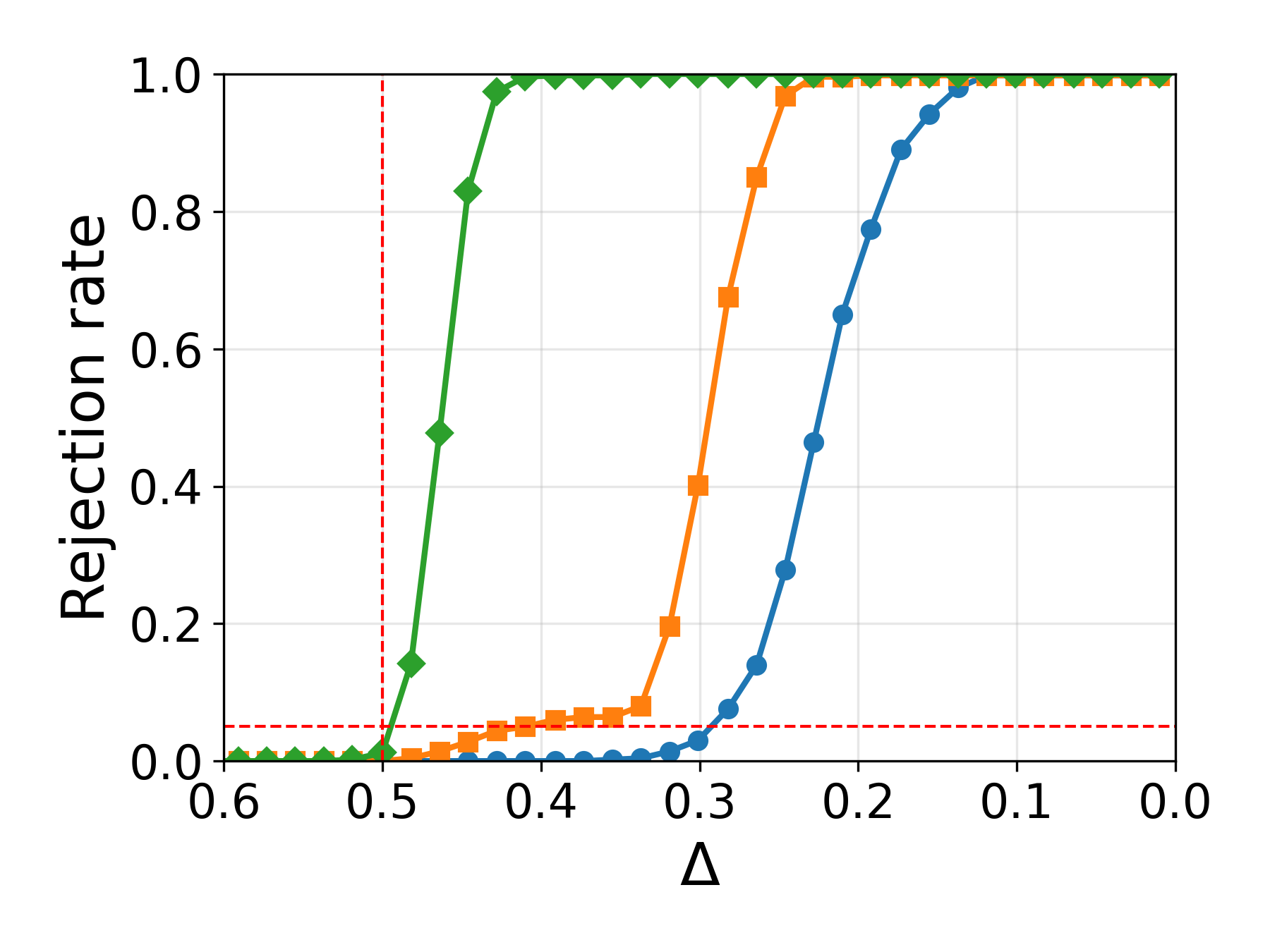} &
\includegraphics[width=0.3\linewidth]{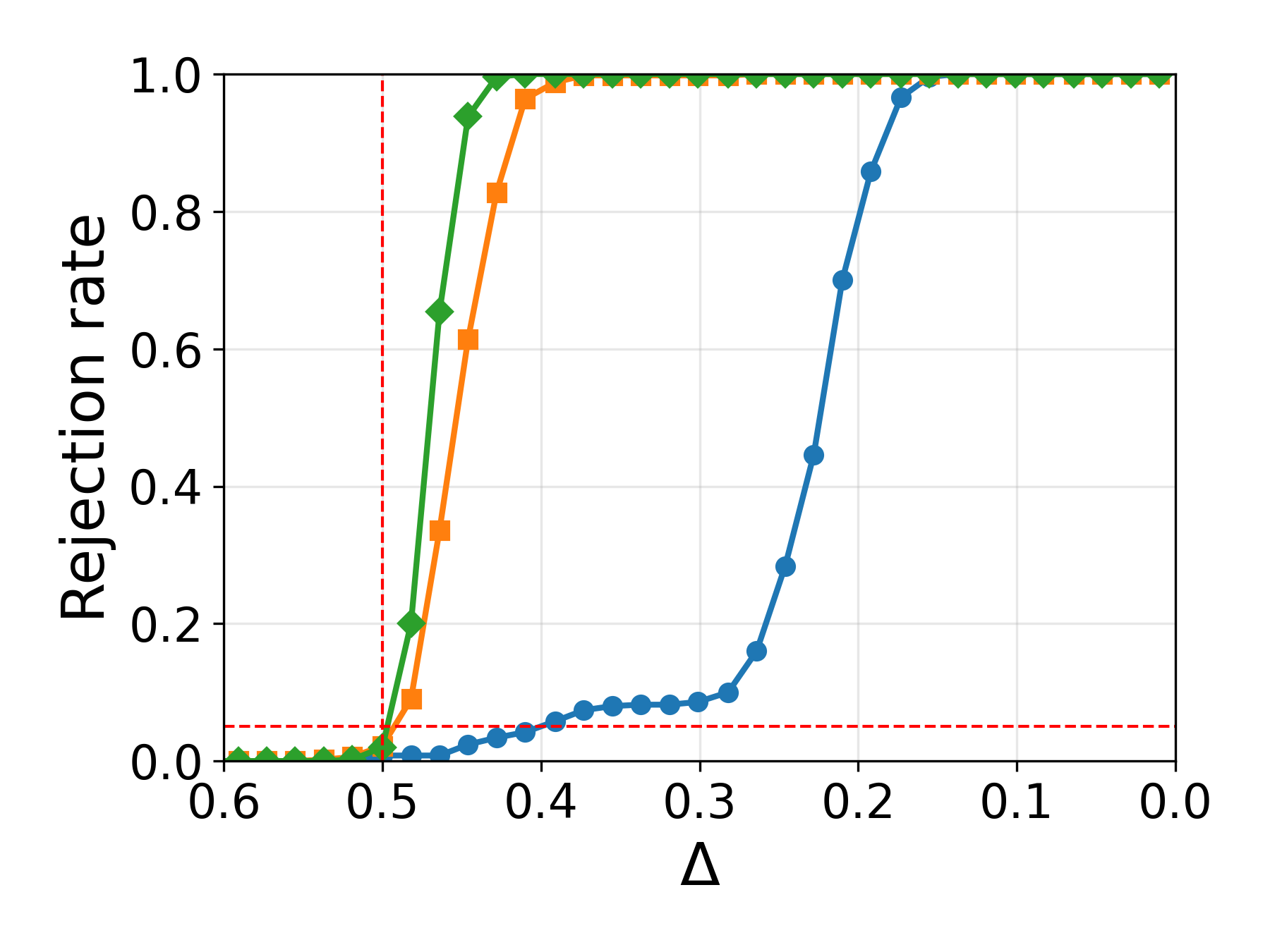}
\end{tabular}

\vspace{-0.3cm}

\makebox[\linewidth][c]{%
  \includegraphics[width=0.6\linewidth]{figures/legend_sample_size_left.png}
}
\vspace{-0.6cm}
\caption{\it Empirical rejection probabilities of the test defined by Algorithm \ref{alg:HD_test} for different privacy parameters $\rho=0.1,0.25,1$ and models \textbf{F1)} (first row) and \textbf{F2)} (second row) with $n \in \{250,500,1000\}$, $p=d(d-1)/2$ with $d=n$ (high-dimensional regime).}
\label{fig:power_curves_grid_high}
\end{figure}
\textbf{Performance in the high-dimensional regime:}
In Figure \ref{fig:power_curves_grid_high} we present corresponding results in the high-dimensional regime. A comparison with Figure \ref{fig:power_curves_grid} shows only a minor loss in power, which is also  predicted by our results. The main driver of this loss is that in the high-dimensional regime, the detection of the gap is more difficult due to the larger amount of coordinates present, as this yields a smaller observable gap with high probability. Whenever the gap can be reliably detected (typically for sample size $n=500$ in low/moderate privacy regimes or for $n=1000$ for high privacy) the behavior of the test is independent of the ambient dimension.

\subsection{Real-World Examples}\label{subsec:real_world_examples} 
~~\\
\textbf{Monotone dependence between genomes}
We analyze a data set from the \cite{1000GenomesProjectConsortium2015}, 
restricted to chromosome~22. For this study we select  
$n=2000$ individuals and focus on a genomic window between 
base pairs $21.55$\,Mb and $21.65$\,Mb, corresponding to a $100$\,kb region. In population genetics alleles in close physical proximity are often inherited together as they are less likely to be separated by recombination. This phenomenon is well documented \citep{Slatkin2008} and known under the name linkage disequilibrium (LD). We hence expect the existence of some base pairs with high correlation and will investigate whether the proposed methodology is able to detect them or not. To be precise we are interested in detecting Kendall correlations exceeding $\Delta=0.493$ to search for genes that may exhibit co-expression. The choice of the threshold $\Delta$ is here motivated by \cite{Tsaparas2006}, who used a Pearson correlation threshold of $0.7$ to define genes as co-expressed. This corresponds to a Kendall's $\tau$ threshold of $\sim 0.493$ for a bivariate elliptically distribution.

For each variant and sample we extract the \texttt{GT} (genotype) field, 
which encodes the pair of alleles as \texttt{0|0}, \texttt{0|1}, or \texttt{1|1}, 
where ``0'' denotes the reference allele and ``1'' the alternative. 
These are converted into integer allele counts
\[
\texttt{0|0} \mapsto 0, 
\quad 
\texttt{0|1} \mapsto 1, 
\quad 
\texttt{1|0} \mapsto 1, 
\quad 
\texttt{1|1} \mapsto 2. 
\]
This yields a genotype matrix
$
X =(X_{ij})_{i=1, \ldots, n}^{j=1, \ldots , d} 
$
with $n=2000$ samples and $d=750$ variants, where $X_{ij} \in \{0,1,2\}$ denotes the allele count for individual $i$ at variant $j$.

To assess dependencies between genes, we compute pairwise Kendall’s rank correlation 
coefficients, where we break ties (tie-adjusted versions have higher sensitivity, making private inference substantially more difficult) by adding independent normal noise with standard deviation $10^{-6}$, leading to a practically negligible bias of order $10^{-12}$ \citep[see][]{Kitagawa2018}. This yields a symmetric matrix of empirical Kendall's $\hat \tau$ coefficients
\[
\hat \tau = (\hat \tau_{jk})_{1 \le j,k \le d} \in [-1,1]^{d \times d}~,
\]
and the corresponding vector $U = \text{vech} (\hat \tau) $ of  $U$-statistics has dimension $p = 280,875$. In this example it is possible to identify a gap between the differences $q_i = |U|_{(i)} - |U|_{(i+1)}$, see   
Figure~\ref{fig:gaps_genome} (a) (the threshold from Algorithm \ref{alg:topk} is given by
$t = 8/n = 0.004$ and depicted by the dotted red line). 
We  observe that many gaps clearly exceed this bound, 
also already highlighting that a substantial subset of variant pairs exhibit non-zero dependence.
\begin{figure}
\centering

\begin{tabular}{@{}cc@{}}
\includegraphics[width=0.4\linewidth]{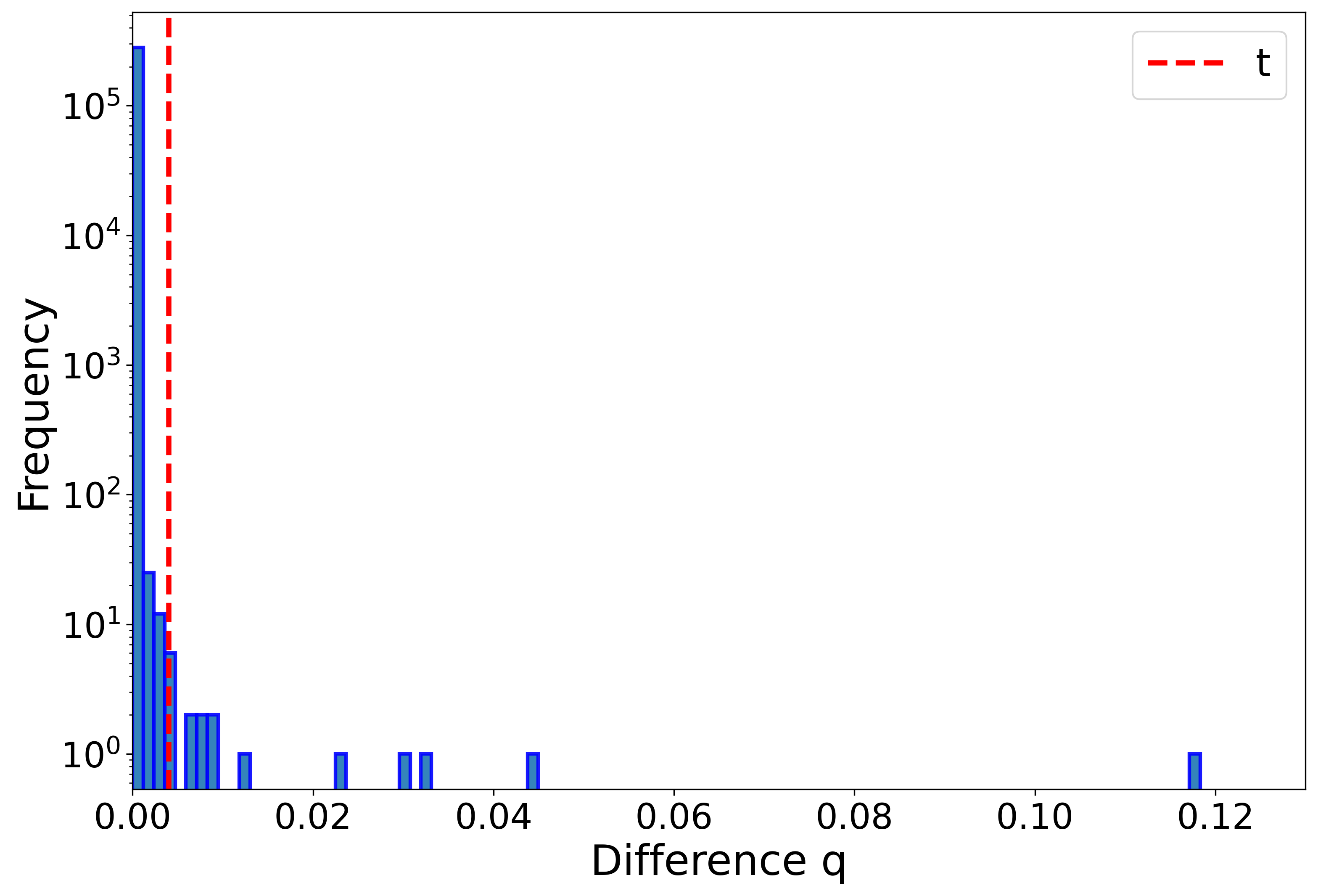} &
\includegraphics[width=0.4\linewidth]{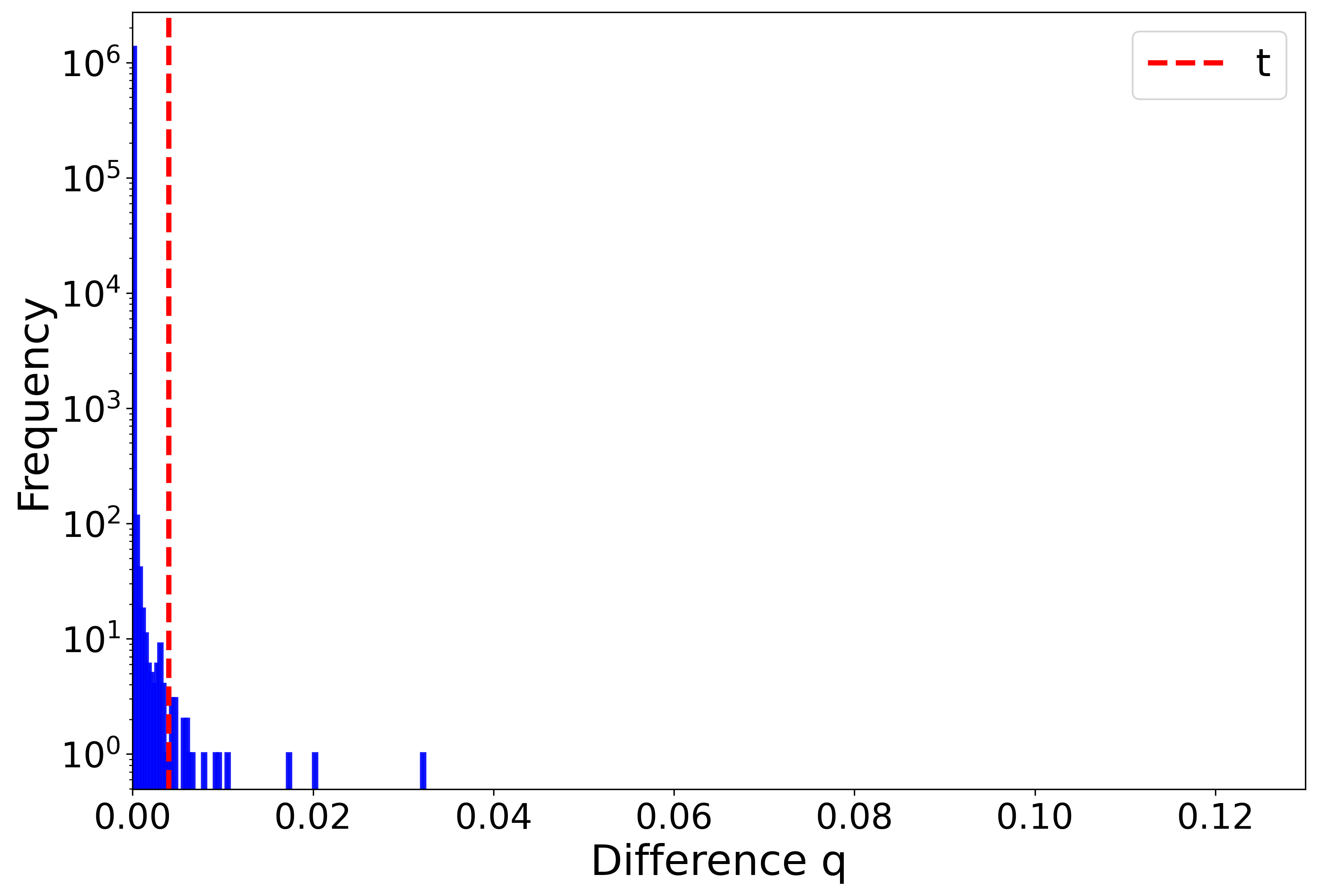} \\

\parbox{0.4\linewidth}{\centering\small (a) 21.55--21.65 Mb window} &
\parbox{0.4\linewidth}{\centering\small (b) 20.45--20.65 Mb window}
\end{tabular}

\vspace{-0.2cm}

\caption{\centering\it
Differences $q_i = |U|_{(i)} - |U|_{(i+1)}$ between successive order statistics of
$U=\mathrm{vech}(\hat{\tau})$ for the genome data of \cite{1000GenomesProjectConsortium2015}.}
\label{fig:gaps_genome}
\end{figure}

We now apply the  test  defined by  Algorithm \ref{alg:HD_test} with a moderate privacy level of $\rho=1,\, \delta=1/n$. 
 The extremal set $\hat{\mathcal E}^{\text{DP}}$ identified by Algorithm \ref{alg:topk} is a singleton containing the coordinate corresponding to $|U|_{(1)}$. Repeating Algorithm \ref{alg:topk} for different seeds exhibited only small changes, almost always choosing one of the gaps $q_i$ to the right of the red line. For the subsequent testing steps of our method we used the same parameters as in the simulation study ($\alpha=0.05$, $B=500$) and rejected the null of $\|\tau\|_\infty \leq \Delta$ for all  $\Delta \leq  0.63$. We hence obtain strong evidence 
that within the 100\,kb region of chromosome~22, 
there exist groups of variants with Kendall's tau larger than $0.493$, indicating the existence of co-expressed genes. In particular, the high magnitude of the detected correlations is fully 
consistent with the presence of Linkage Disequilibrium in this genomic region. Moreover, due to the nature of our approach we also immediately obtain the gene pair with the highest Kendall's $\tau$ with no extra cost in terms of privacy. In comparison, the naive approach based on Hoeffding's inequality only rejects $H_0(\Delta)$ for  $\Delta \leq  0.47$, yielding weak evidence for the existence of co-expressed genes.

To provide a fuller picture of the capabilities of the proposed method on real data we also consider a scenario where the genomic window is larger and there is no clear gap separating the bulk and the largest correlations.  To this end we chose the genomic window between base pairs $20.45$\,Mb and $20.65$\,Mb, corresponding to a $200$\,kb region and keep the sample size at $n=2000$. As evidenced in Figure \ref{fig:gaps_genome} (b),  a gap is visible in the data; however, privacy constraints make reliable detection difficult due to the Gumbel noise introduced by the Report Noisy Max algorithm. Note that shifting the majority of the privacy budget to this step can mitigate this problem. However, we do not adopt this approach, as our goal is to illustrate that even without such an adjustment, Algorithm \ref{alg:HD_test} still attains satisfactory performance.
The resulting test rejects $H_0(\Delta) $ for all $\Delta\leq 0.53$, yielding a weaker, but qualitatively comparable result to the case where the  gap is clearly visible that still outperforms the Hoeffding approach. This illustrates both the fact that identifying and using gap structures if they are present is a worthwhile endeavor, and also that the proposed method still performs acceptably even if no such gap is detected.

\smallskip

\textbf{Mass-spectrometry re-analysis (prostate cancer vs.\ healthy)}

We re-analyze the protein mass spectrometry data described by \cite{adam2002serum}.
Each sample $i$ provides intensities $X_{i,j}$ at many time-of-flight values $t_j$,
with time-of-flight related to the mass-to-charge ratio ($m/z$) of blood-serum proteins.
There are $157$ healthy and $167$ prostate-cancer patients. Following prior work  \citep{tibshirani2005sparsity,Levina},
$m/z$ sites below $2000$ are discarded.
Intensities are then averaged in consecutive blocks of $10$, yielding $p=23653$ correlation  features per sample. We present a heatmap of the matrix of Kendall's $\tau$ coefficients in Figure \ref{fig:mspec}.
\begin{figure}[ht]
\centering

\begin{tabular}{@{}cc@{}}
\includegraphics[width=0.35\linewidth]{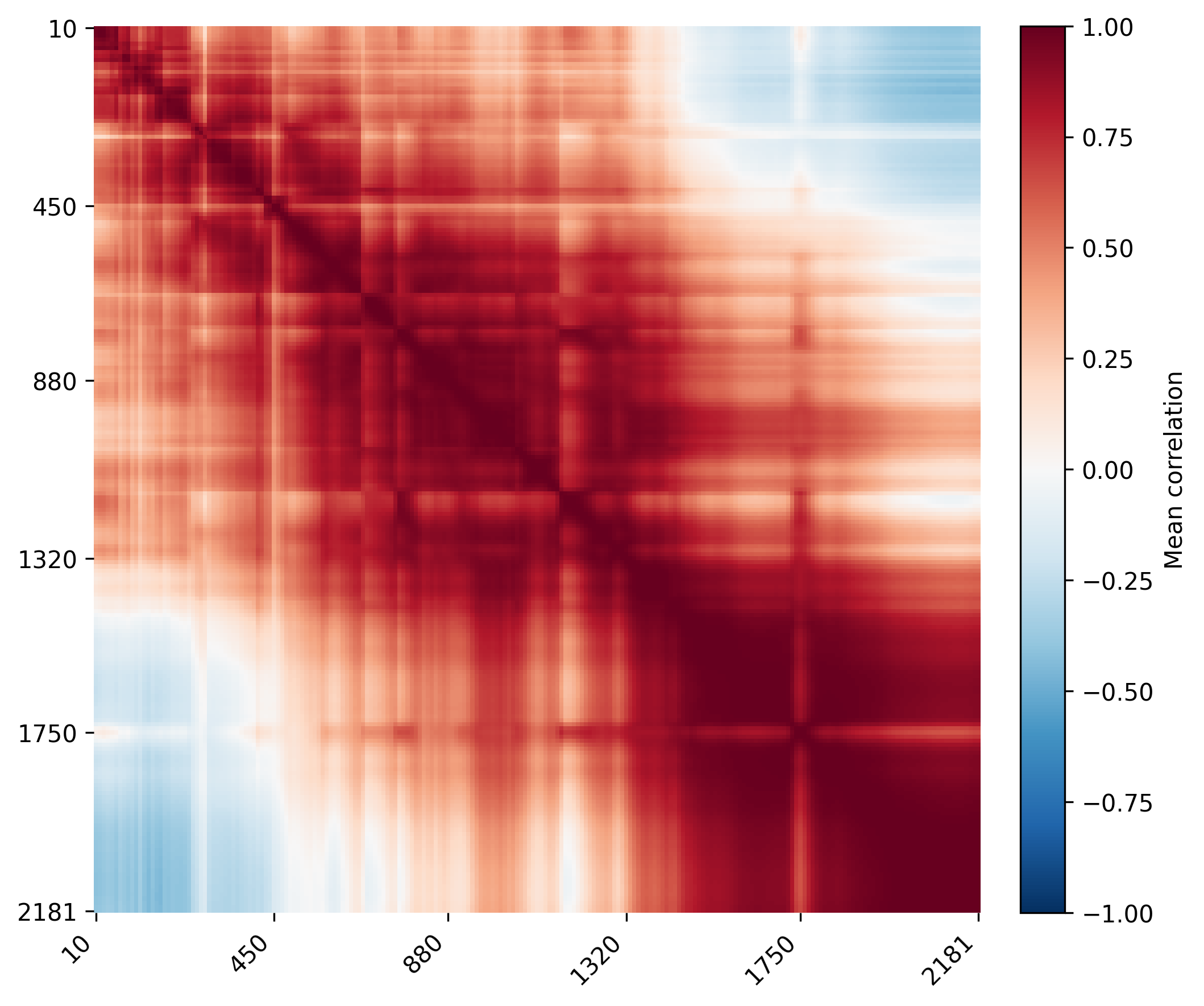} &
\includegraphics[width=0.35\linewidth]{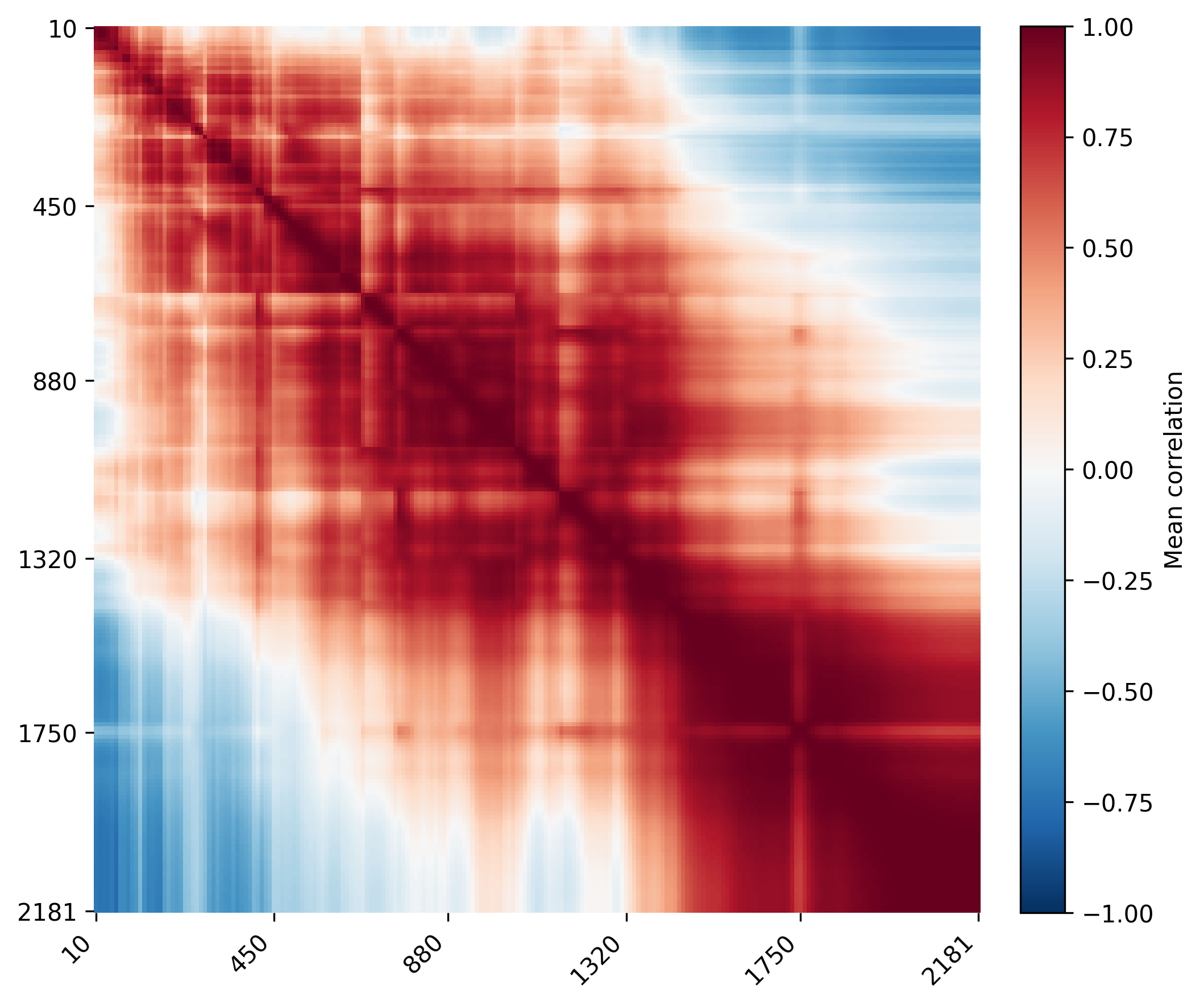}
\end{tabular}

\vspace{-0.3cm}

\caption{\it Block-compressed Kendall's $\tau$ correlation heatmap of healthy (left) group and cancer group (right).}
\label{fig:mspec}
\end{figure}

Both correlation heatmaps clearly display a banded structure but appear visually hard to distinguish. Therefore, it is essential to formulate a testing problem that can detect subtle differences consistently.
Following the previous analyses,  we note that the healthy patients seem to exhibit a bandedness structure that the cancer patients lack (observe the light vs dark blue color in the upper right and lower left corners). While the classical hypothesis framework is not able to demarcate these two structures, \cite{patrick_annals} instead consider the relevant hypotheses-pair of the form
\begin{equation}
  H_0(\Delta): \max_{|i-j|\ge m} \lvert \rho_{ij} \rvert \leq \Delta, \quad \text{vs.} \quad H_1(\Delta):  \max_{|i-j|\ge m} \lvert \rho_{ij} \rvert > \Delta~,
\end{equation}
where $\rho_{ij}$ is the spearman correlation and $m\in \N$ determines the size of the band and is chosen as $m=125$. With this the authors were able to demarcate the two data sets in a non-private setting based on the fact that the hypothesis $H_0(0.1)$ is rejected only for the cancer patients.

We will now analyze this data set with the proposed differentially private methodology at privacy level $\rho=0.1$ with nominal level $\alpha=0.05$. Instead of  Spearman's $\rho$, we will consider Kendall's $\tau$ as in our case the lower rank of the $U$-statistic yields a lower sensitivity bound for the test statistic, and thus requires less noise to privatize. Applying the proposed method we did not detect any gap between the signals. The increments of the ordered $U$-statistics are small across all differences, not exceeding $0.01$ even once, whereas $t=8/n\approx 0.048$ is the threshold in Algorithm \ref{alg:topk}.

We thus resorted to the extreme value based test in Algorithm \ref{alg:HD_test}, which rejects $H_0(\Delta)$ for the healthy group for any $\Delta\leq 0.11$. On the other hand,  the test rejects  for the cancer patients $H_0(\Delta)$  whenever  $\Delta \leq 0.38$. This means that, qualitatively, we obtain the same demarcation between healthy and ill patients as \cite{patrick_annals}  in the non-private setting. 

\section{Conclusions}
  \def\theequation{6.\arabic{equation}}	
	\setcounter{equation}{0}

In this work we consider the problem of testing for practically relevant dependencies in high-dimensional data  under DP constraints. While simple concentration based approaches can work in settings with either a very strong signal or very high sample size  they perform poorly in more realistic scenarios with moderate to strong signal and moderate sample size. We propose a novel private testing procedure  that improves upon this baseline of performance substantially. The improvement is particularly strong for data where the largest signals are separated from the bulk by a gap. In a wide range of scenarios the method even has  good finite sample performance if the size of the gap is fairly small (of order $O(1/n)$). In the case where no such gap is present in the data, the new method still improves upon approaches based on concentration bounds.
As we investigate the problem in the general framework of high-dimensional $U$-statistics our method is also applicable for other testing problems.

We prove the validity of our approach in the asymptotic scenario, where the dimension   increases exponentially with the sample size. The algorithm has excellent finite sample properties if the private and accurate estimation of an extremal set - the set of indices of the coordinates where the signal attains its maximum norm - is possible. We construct an estimator for this purpose, which is accurate in the presence of a gap in the statistics and only requires a privacy budget for the propose-test-release mechanism once. In contrast, methods which sequentially output the indices such as the Report-Noisy-Max or Sparse Vector technique, require a privacy budget that scales as $\sqrt{k}$, where $k$ is the (unknown) cardinality of the extremal set.

\section*{Acknowledgements}
MD was funded by the Deutsche Forschungsgemeinschaft (DFG, German Research Foundation) under Germany's Excellence Strategy - EXC 2092 CASA - 390781972. PB and HD were partially supported  by the
DFG;  TRR 391 {\it Spatio-temporal Statistics for the Transition of Energy and Transport} (520388526);
Research unit 5381 \textit{Mathematical Statistics in the Information Age} (460867398).

\newpage

\appendix

\begin{frontmatter}
	\title{Supplement to: Differentially private testing for relevant dependencies in high dimensions}
	\runtitle{RELEVANT DEPENDENCIES IN HIGH DIMENSIONS UNDER PRIVACY}
	
	\begin{aug}
		\author[A]{\fnms{Patrick}~\snm{Bastian}\ead[label=e1]{patrick.bastian@rub.de}},
		\author[B]{\fnms{Holger}~\snm{Dette}\ead[label=e2]{holger.dette@rub.de}}
		\and
		\author[B]{\fnms{Martin}~\snm{Dunsche}\ead[label=e3]{martin.dunsche@rub.de}}
        \address[A]{Aarhus University\printead[presep={,\ }]{e1}}
		\address[B]{Ruhr-Universität Bochum\printead[presep={,\ }]{e2,e3}}
		
	\end{aug}

\end{frontmatter}
\section{Limitations of the sparse vector technique}\label{sec:svt}
  \def\theequation{A\arabic{equation}}	
	\setcounter{equation}{0}
   
In this section we compare the privacy guarantees for the sparse vector technique (SVT) in \cite{NEURIPS2020_e9bf14a4} to ours in a high-dimensional regime where $p \gg n$. For that, let us first quickly recap how the SVT technique works: the basic technique lets you answer many noisy queries privately by only revealing which ones are \textit{big enough} to cross a noisy threshold. In simple terms, it adds noise to both the threshold and the queries so you can safely say ``yes'' or ``no'' a few times without spending privacy for each comparison. However, for every positive answer the used privacy increases. First we note that, in the language of $(\ve,\delta)$-privacy our worst privacy bound $(\rho=1,\delta=1/n)$ in the finite sample study relates to $\ve \approx 6$ and $\delta =1/n$ for which our method performs well in all considered cases. Our method is based on (randomly) selecting at most $\log(p)$ coordinates from the extremal set $\hatEDP$ to ensure that privatization of the covariance is still feasible. Let us now consider the SVT with the bound
\[
\ve(\delta)\;\le\;
\frac{\Delta^{2}}{2\sigma_{1}^{2}}
+\frac{2c\,\Delta^{2}}{\sigma_{2}^{2}}
+\sqrt{\,2\!\left(\frac{\Delta^{2}}{2\sigma_{1}^{2}}+\frac{2c\,\Delta^{2}}{\sigma_{2}^{2}}\right)
\left(\log(\delta^{-1})+\log\!\left(c\binom{p}{c}\right)\right)}\,,
\]
from \cite{NEURIPS2020_e9bf14a4} with $c=\log(p)$. If the queries are shuffled in advance this is, mathematically speaking, the same as the random selection in our methodology. Here, following the notation of the paper, $\Delta$ denotes the sensitivity of the queries $q_j=|U_j|-\norm{U}_\infty$,  and $\sigma_1,\sigma_2$ are the user chosen noise variances of the queries and of the threshold, respectively (see \cite{NEURIPS2020_e9bf14a4} for details). At a first glance the right-hand side also only depends logarithmically on the dimension $p$. Unfortunately the constants in this bound are too large for a fruitful application in the present context. We underpin this observation by a simple empirical illustration.
\smallskip 

\noindent\textbf{Numerical illustration.}
As in Section~\ref{sec:experiments}, we take $n=250$, $d=n$, and the pair-wise Kendall's $\tau$s, i.e. $p=d(d-1)/2=31125$. Keep $\delta=1/n$, $\Delta=4r/n$, and first set $\sigma_1=\sigma_2=0.1$. Note that this is quite a generous choice considering that we want to detect gaps between Kendall's $\tau$ associations which take values in $[-1,1]$. We then obtain the upper bound $\ve(\delta)\leq 24.21838$. For a more realistic scenario with good statistical results, i.e. $\sigma_1=\sigma_2=0.01$, we obtain $\ve(\delta)\leq 449.5438$, which, for all intents and purposes, is practically equivalent to no privacy at all. Even raising the sample size, often not possible in practice without major effort, to $n=1000$ does not really improve the results: returning to the less realistic $\sigma_1=\sigma_2=0.1$, we merely obtain $\ve\leq 8.012233$. In contrast our algorithm performs well in these scenarios even for strong privacy regimes such as $\rho=0.1$ (i.e. $\ve \approx 1)$.\\

We conclude emphasizing one more the crucial difference between SVT and our methodology for estimating the extremal set. SVT is a composition of mechanisms, which scales (with respect to privacy) like $\sqrt{k}$ due to composition, where $k$ is the number of positively answered queries. In contrast to that, whenever the data has some structure,  such as a gap of order $O(1/n)$ in the ordered (absolute) $U$-statistics, we can simplify the estimation of the extremal set to a "two-dimensional'' problem from a privacy perspective using Algorithm \ref{alg:rep_noisy_max} and the PTR framework for the estimation of the extremal set. This is already sufficient for our purposes.

\section{Additional Algorithms}
\label{appendixalg}
  \def\theequation{B.\arabic{equation}}	
	\setcounter{equation}{0}
In this section, we present the algorithms that accompany our theoretical results. That includes the resampling procedures (Algorithm \ref{alg_monte_carlo_quantile_highdim}), the Gumbel-based test (Algorithm \ref{alg:Gumbel}) and a private covariance estimator (Algorithm \ref{alg_gausscov}). We also recall the resampling procedure introduced in \cite{Dunsche2025} in Algorithm \ref{alg_monte_carlo_quantile_mult}. In Algorithm \ref{alg_gsvt}, we state the generalized SVT algorithm proposed in \cite{NEURIPS2020_e9bf14a4}.
\begin{algorithm}[H]
 \begin{algorithmic}[1]
 \Require sign matrix: $\hat S$, covariance matrix: $\hat\zeta_1$, sample size $n$,  resample parameter: $B$, privacy parameter: $\rho$, sensitivities: $\Delta_2 \norm{U}_\infty$, $\Delta_2\hat\zeta_1$
 \Ensure Empirical $(1-\alpha)$-quantile: $ q_{1-\alpha}^{*,B}$.
 \Function{HQU}{$\hat\zeta_1$, $\hat S$, $n$, $B$, $\Delta_2 \norm{U}_\infty$,  $\Delta_2 \hat \zeta_1$, $\rho$}
	\For {$b=1,\hdots,B$}
    \State Define $\hat \zeta_1^S:=\hat S\odot\hat\zeta_1$.\Comment{Hadamard product}
    \State Define $\hat \zeta_1^{\text{S,DP}}:=\textproc{Gausscov}(\hat{\zeta_1}^S, \rho, \Delta_{2}\hat\zeta_1)$~
    \State Sample ${U_b}^* \sim \mathcal{N}(0,\hat \zeta_1^{\text{S,DP}})$.
    \State Define $U_b^{\text{DP}, *} := \norm{U_b^*}_\infty^{\text{DP}}$, where $\text{DP}$ denotes the same mechanism used for $\norm{U}_\infty^{\text{DP}}$.  \Comment{Lemma \ref{Lem_steinke_gauss}}
    \State Define $T^*_b:= U_b^{\text{DP}, *} $.
	\EndFor
	\State Sort statistics in ascending order: $({T^*_{(1)}},...,{T^*_{(B)}})= sort(T^*_1,...,{T^*_B})$.
	\State Define $q_{1-\alpha}^{*,B}:={T}^*_{(\lfloor(1-\alpha)B\rfloor)}$~.
	\State \Return $q_{1-\alpha}^{*,B}$
	\EndFunction
    \end{algorithmic}
	\caption{High-Dimensional Quantile Monte Carlo U-Statistics (HQU)}\label{alg_monte_carlo_quantile_highdim}
\end{algorithm}
\begin{algorithm}[H]
  \caption{\textsc{P-GUMBEL-TEST}: Private Test for U-statistics in High Dimensions using Gumbel Approximation}
  \label{alg:Gumbel}
  \begin{algorithmic}[1]
    \Require Data set $X$, privacy budgets $\delta,\rho$, $\alpha$ level.
     \Ensure test decision and $\norm{U}_\infty^{\text{DP}}$.
     \Function{P-GUMBEL-TEST}{$U$, $\rho$, $n$, $p$,$\gamma$}
        \State Compute private estimator $\norm{U}^{\text{DP}}_\infty=\norm{U}_\infty+Z$ with $2\rho/3$. \Comment{Lemma \ref{Lem_steinke_gauss}}
        \State Define $a_p=\sqrt{2\log(p)}$ and $G\sim Gumbel(0,\sqrt{L_\infty-(\Delta-\gamma)^2})$
        \State Define $Q:=\frac{q_{1-\alpha}^G}{a_p}+a_p-\frac{\log\log(p)+\log(4\pi)}{2a_p}$~
        \If{$\norm{U}^{\text{DP}}_\infty\geq Q/\sqrt{n}+\Delta$}
        \State \Return Reject $H_0$ and output $\norm{U}_\infty^{\text{DP}}$.
        \Else
        \State \Return Fail to reject $H_0$ and output $\norm{U}_\infty^{\text{DP}}$.
     \EndIf
     \EndFunction
  \end{algorithmic}
\end{algorithm}

\begin{algorithm}
\begin{algorithmic}[1] 
    \Require $\hat \zeta_1\in \R^{d\times d}$, privacy parameter $\rho$, sensitivity $\Delta_2 \hat \zeta_1$.
    \Ensure Privatized covariance matrix $\hat\zeta_1^{\text{DP}}$.
    \Function{Gausscov}{$\hat \zeta_1$, $\rho$, $\Delta_2 \hat \zeta_1$}
		\State Generate $Z_{i,j}\sim\mathcal N(0,1)$ for $1\leq i\leq j\leq d$ with $Z_{j,i}=Z_{i,j}$.
        \State Define $Z=(Z_{i,j})_{i,j=1,\hdots, d}$ and compute 
        \begin{equation*}
    \hat\zeta_1^{\text{DP}}=\hat\zeta_1+\frac{\Delta_2 \hat \zeta_1}{\sqrt{2\rho}}Z~.
    \end{equation*}
    \Return $\hat\zeta_1^{\text{DP}}$
	 \EndFunction 
    \end{algorithmic}
	\caption{Private Covariance estimation with additive noise \textbf{Gausscov}}\label{alg_gausscov}
\end{algorithm}
\begin{algorithm}[H]
 \begin{algorithmic}[1]
  \Require private covariance matrix: $\hat\zeta_1^{\text{DP}}$, sample size: $n$,  resample parameter: $B$, privacy parameter: $\rho$, sensitivity: $\Delta_2 \norm{U}_\infty$.
  \Ensure Empirical $(1-\alpha)$-quantile: $ q_{1-\alpha}^{*,B}$.
 \Function{QU}{$\hat\zeta_1^{\text{DP}}$, $n$, $B$, $\Delta_2 \norm{U}_\infty$, $\rho$}
	\For {$b=1,\hdots,B$}
    \State Sample ${U_b}^* \sim \mathcal{N}_s(0,\hat \zeta_1^{\text{DP}}/n)$.
    \State Compute private estimator $U_b^{\text{DP}, *} := \norm{U_b^*}_\infty^{\text{DP}} =\norm{U_b^*}_\infty+Z$ with $2\rho/3$. \Comment{Lemma \ref{Lem_steinke_gauss}}
    \State Define $T^*_b:= \sqrt{n}(U_b^{\text{DP},*}) $.
	\EndFor
	\State Sort statistics in ascending order: $({T_{(1)}^*},...,{T_{(B)}^*})= sort(T^*_1,...,{T^*_B})$.
	\State Define $q_{1-\alpha}^{*,B}:={T}^*_{(\lfloor(1-\alpha)B\rfloor)}$~.
	\State \Return $q_{1-\alpha}^{*,B}$
	\EndFunction
    \end{algorithmic}
	\caption{Multivariate Quantile Monte Carlo U-Statistics (QU) \cite{Dunsche2025}}\label{alg_monte_carlo_quantile_mult}
\end{algorithm}
\begin{algorithm}
\begin{algorithmic}[1]
    \Require Data $X$, adaptive queries $q_1, q_2, \ldots \in Q$ with sensitivity $\Delta$, 
    noise mechanisms $\mathcal{M}_\rho, \mathcal{M}_\nu$, threshold $t$, 
    cut-off $c$, max length $k_{\max}$.
    \Ensure Outputs $a_i \in \{\top, \bot\}$.
    \Function{SVT}{$X$, $Q$, $t$, $\Delta$, $c$, $k_{\max}$}
        \State Sample $\hat t\sim \mathcal{M}_\rho(D, t)$
        \State count = 0
        \For{$i = 1,2,3,\ldots, k_{\max}$}
            \State Sample $\hat q_i \sim \mathcal{M}_\nu(X, q_i)$
            \If{$\hat q_i \ge \hat t$}
                \State Output $a_i = \top$, count = count + 1
                \If{count $\ge c$}
                    \State \textbf{abort}
                \EndIf
            \Else
                \State Output $a_i = \bot$
            \EndIf
        \EndFor
    \EndFunction
\end{algorithmic}
\caption{Generalized Sparse Vector Technique (SVT) (slightly adjusted Algorithm 2 in \cite{NEURIPS2020_e9bf14a4}}
\label{alg_gsvt}
\end{algorithm}

\section{Additional Simulations}\label{appadditionalsim}
  \def\theequation{C.\arabic{equation}}	
	\setcounter{equation}{0}
In this section we present further simulation results. First we study the robustness of the proposed method with respect to violations of the assumptions. For this purpose we consider \textbf{U}nfavorable scenarios, in which our theory does not predict the performance of Algorithm \ref{alg:HD_test}. Second, we compare our methodology to the state of the art in the non-private setting and demonstrate superior performance in some settings despite the additional cost incurred by ensuring DP. 
\smallskip 

\textbf{Robustness} 
From a theoretical perspective, the strong performance of the test defined by Algorithm \ref{alg:HD_test} relies on the existence of a sufficiently large gap. While this assumption is often reasonable in practice, an obvious question is how the procedure performs in less favorable settings. For this purpose, we consider two further models. All other parameters are the same as in Section \ref{simul}.

\begin{itemize}
    \item [\textbf{U1)}] A dense signal with $\norm{\text{vech}(\mathcal T)}_\infty=0.5$ but no gaps, i.e. 
    we define \( m = \left\lfloor d / \sqrt{2} \right\rfloor \) and construct the matrix \( \mathcal T \in \mathbb{R}^{d \times d} \) as follows:
\begin{equation}
    \label{taumatricesU1}
    \mathcal T = 
\begin{bmatrix}
\mathbf{A} & 0 \\
0 & \mathbf{I}_{d - m}
\end{bmatrix}
\end{equation}
where $$\mathbf{A}:=\mathbf{A}'- diag(\mathbf{A}')+\mathbf{I}_m
$$
and \( \mathbf{A}' = \mathbf{a} \mathbf{a}^\top \in \mathbb{R}^{m \times m} \). Here, \( \mathbf{a} \in \mathbb{R}^m \) is an equidistant scaled vector
\[
\mathbf{a} = \sqrt{\frac{0.5}{\max_{i < j} (b_i b_j)}} \mathbf{b}~.
\]
with $\mathbf{b} = (0.01+ {(j-1)(0.99-0.01)}/({m-1}))_{j=1,\hdots, m}\in \R^m$.

\item [\textbf{U2)}] A dense signal with $\norm{\text{vech}(\mathcal T)}_\infty=0.5$ but two gaps, i.e. for $m=\lfloor d/\sqrt{2}\rfloor$ we take
  \begin{equation}
   \label{taumatricesU2}
\mathcal T =
\begin{bmatrix}
0.5  \mathbf{I}_m + 0.5  \mathbf{J}_m & 0 \\
0 & 0.75  \mathbf{I}_{d - m} + 0.25  \mathbf{J}_{d - m}
\end{bmatrix}
\end{equation}
where identity matrix,
\(  \mathbf{J}_l \) is a \( l \times l \) matrix with  all entries equal to one.
\end{itemize}
 Note that in both scenarios at least one of the assumptions is violated. We display in Figure \ref{fig:power_curves_grid_high_unfav} and \ref{fig:power_curves_grid_high_unfav_high} the rejection probabilities of the test  in the scenarios \textbf{U1), U2)} for  a moderate dimensional ($p\approx n$) and high-dimension ($p\approx n^2/2$) setting, respectively.\\
\textbf{Performance for unfavorable setups}
Again, the test keeps its nominal level across all cases under consideration. For the scenario \textbf{U2)}, we observe very good power which is comparable but slightly inferior to power for the scenarios \textbf{F1)} and \textbf{F2)}. As this scenario has two gaps we sometimes detect the gap between the $0$ and $0.25$ valued correlations, leading to more than $\log(p)$ coordinates that are deemed relevant and thus to randomly sampling from the $0.25$ and $0.5$ valued correlation coordinates which reduces the detection power. This effect can be, to some degree, alleviated by penalizing gaps of smaller coordinates by appropriately choosing appropriate weights $\nu(j)$ in Algorithm \ref{alg:rep_noisy_max}, e.g. by $c\big (1- {j}/{n}\big )$. Some care is needed in that case, as this does not lead to uniform improvement across all possible scenarios. In scenario \textbf{U1)} there is no gap of sufficient size for reliable detection. Accordingly we do not reliably separate relevant from irrelevant coordinates, leading to use of the Gumbel approximation in most cases. Nevertheless the test works reasonably well even in this scenario. 

\begin{figure}[ht]
\centering

\begin{tabular}{@{}ccc@{}}
\includegraphics[width=0.32\linewidth]{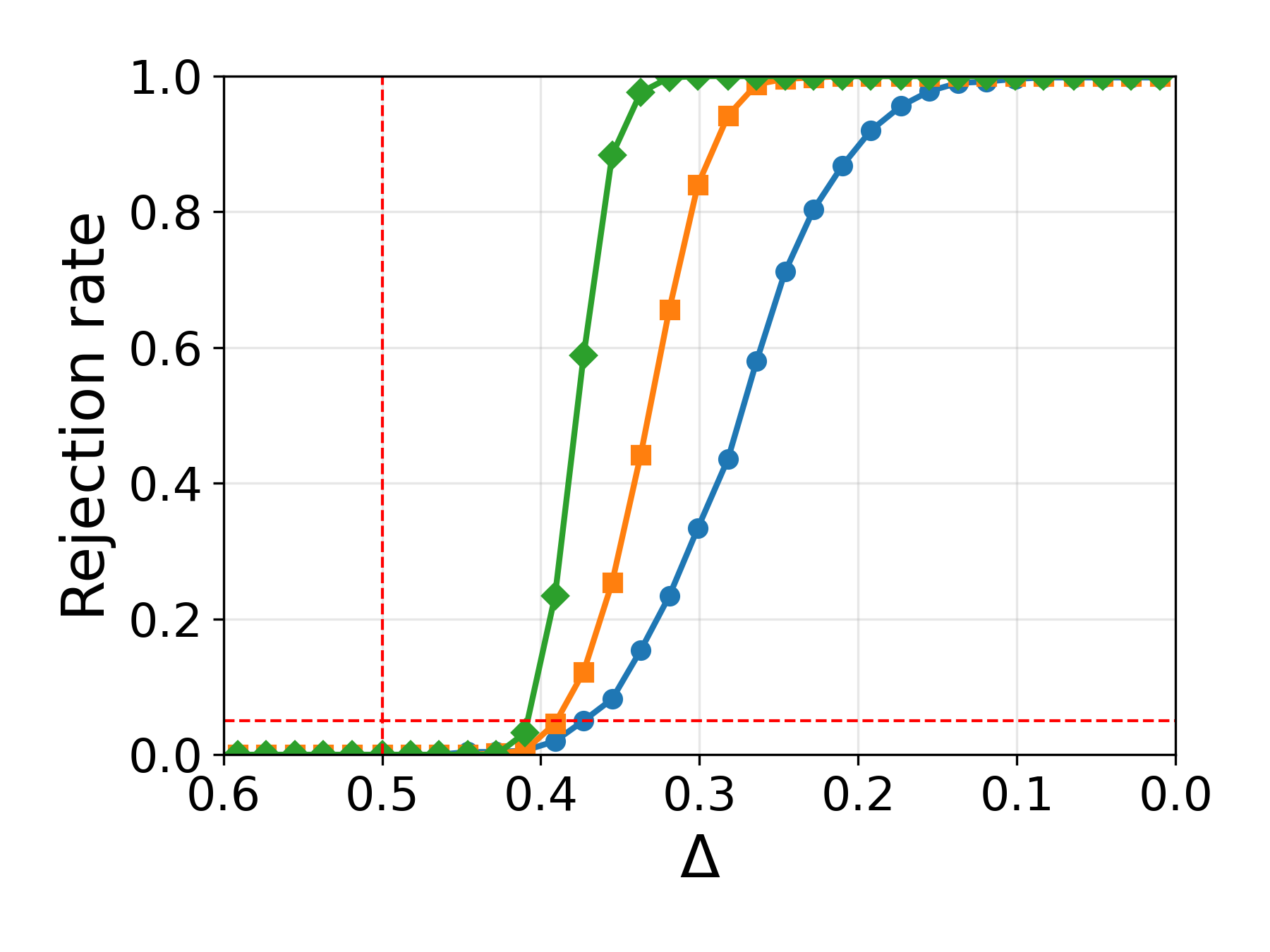} &
\includegraphics[width=0.32\linewidth]{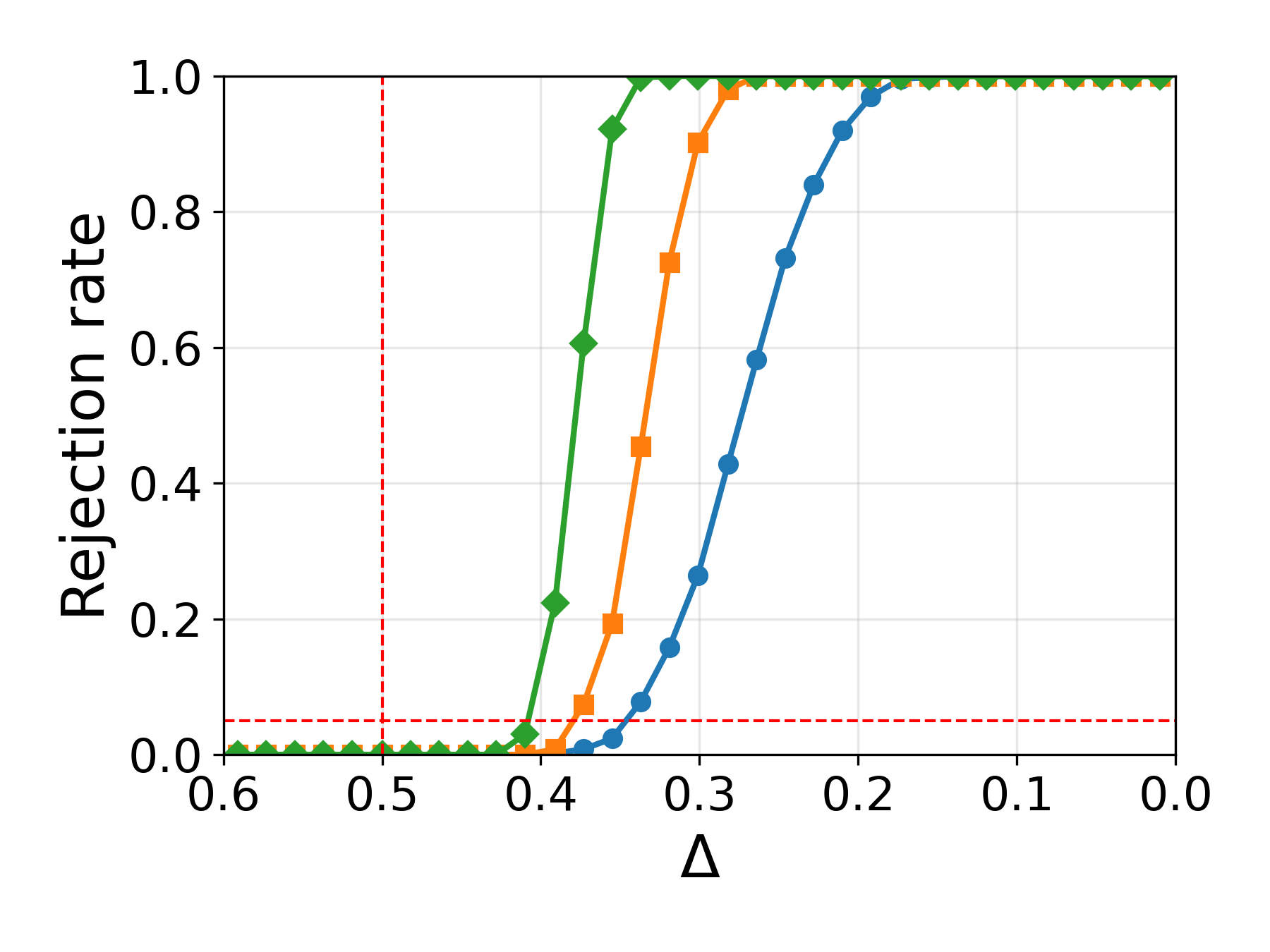} &
\includegraphics[width=0.32\linewidth]{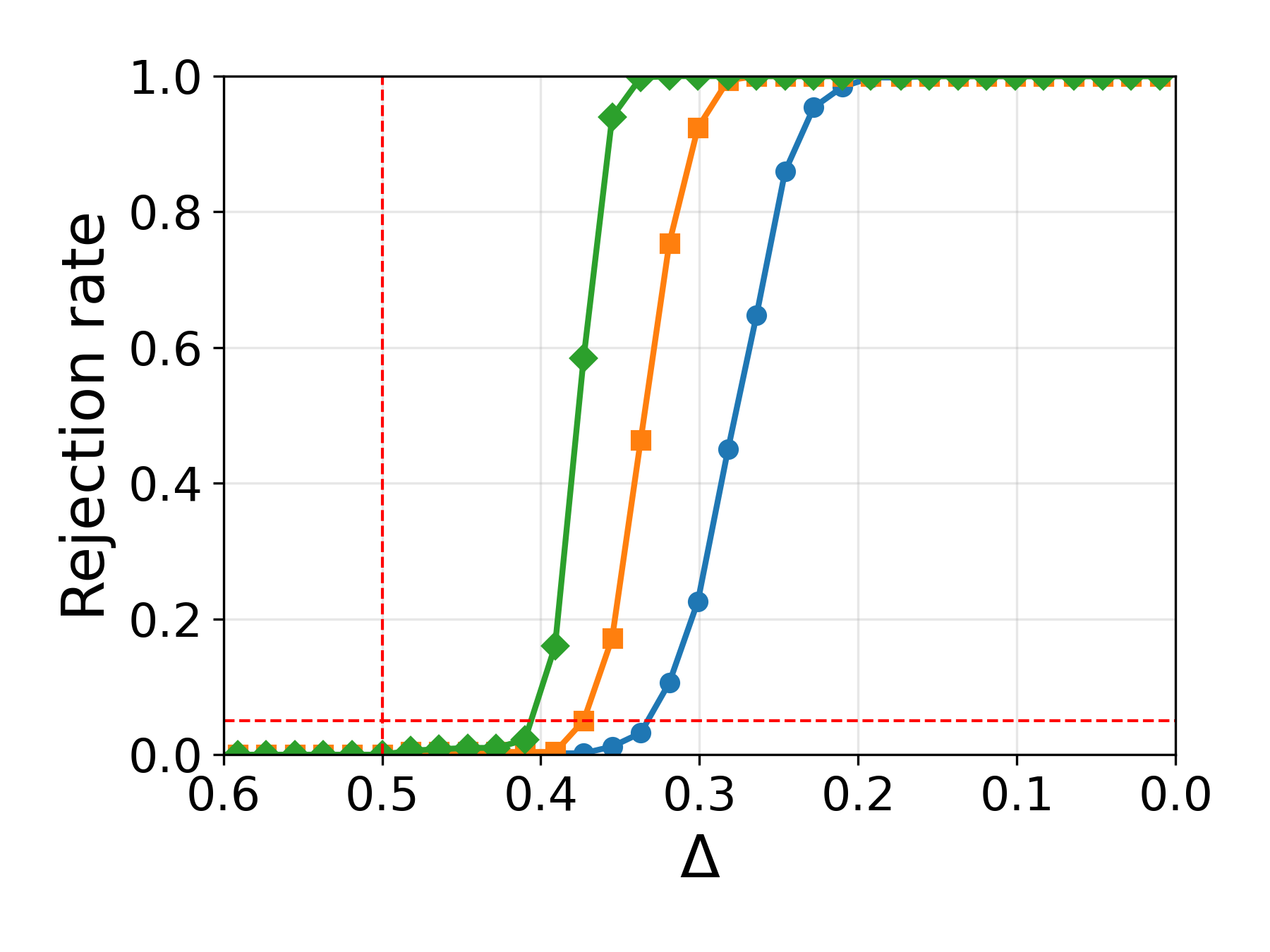} \\[-0.6cm]

\includegraphics[width=0.32\linewidth]{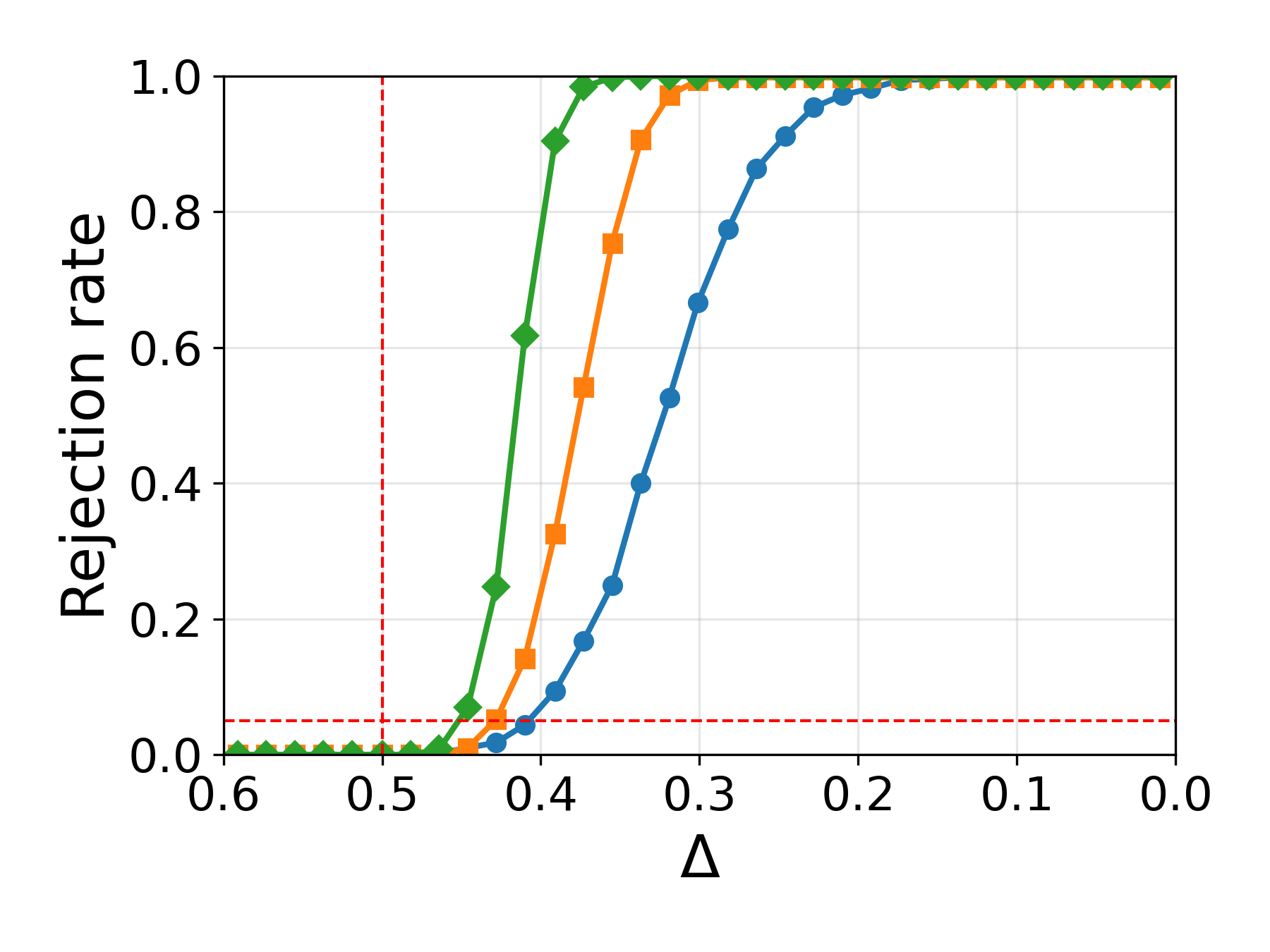} &
\includegraphics[width=0.32\linewidth]{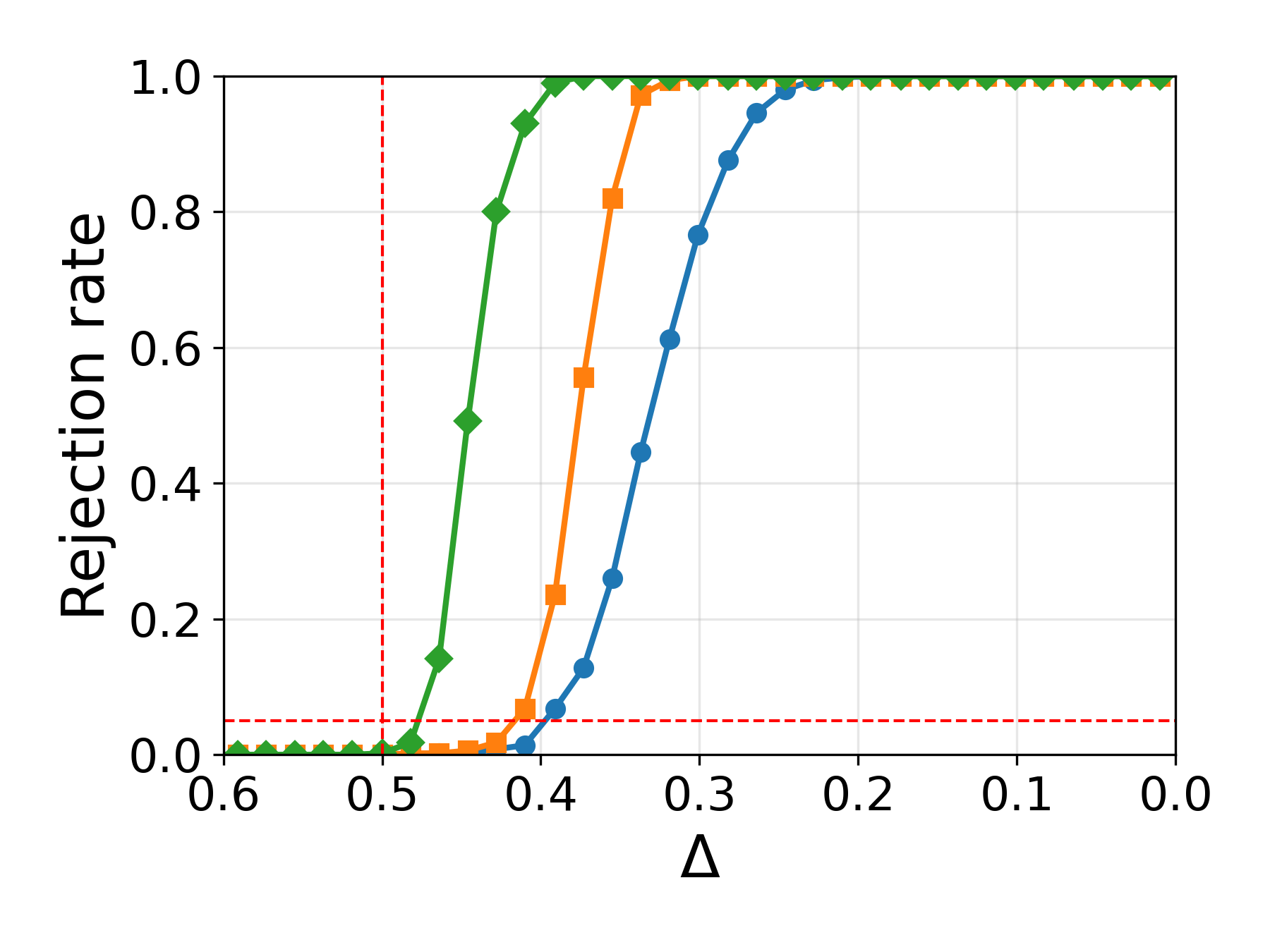} &
\includegraphics[width=0.32\linewidth]{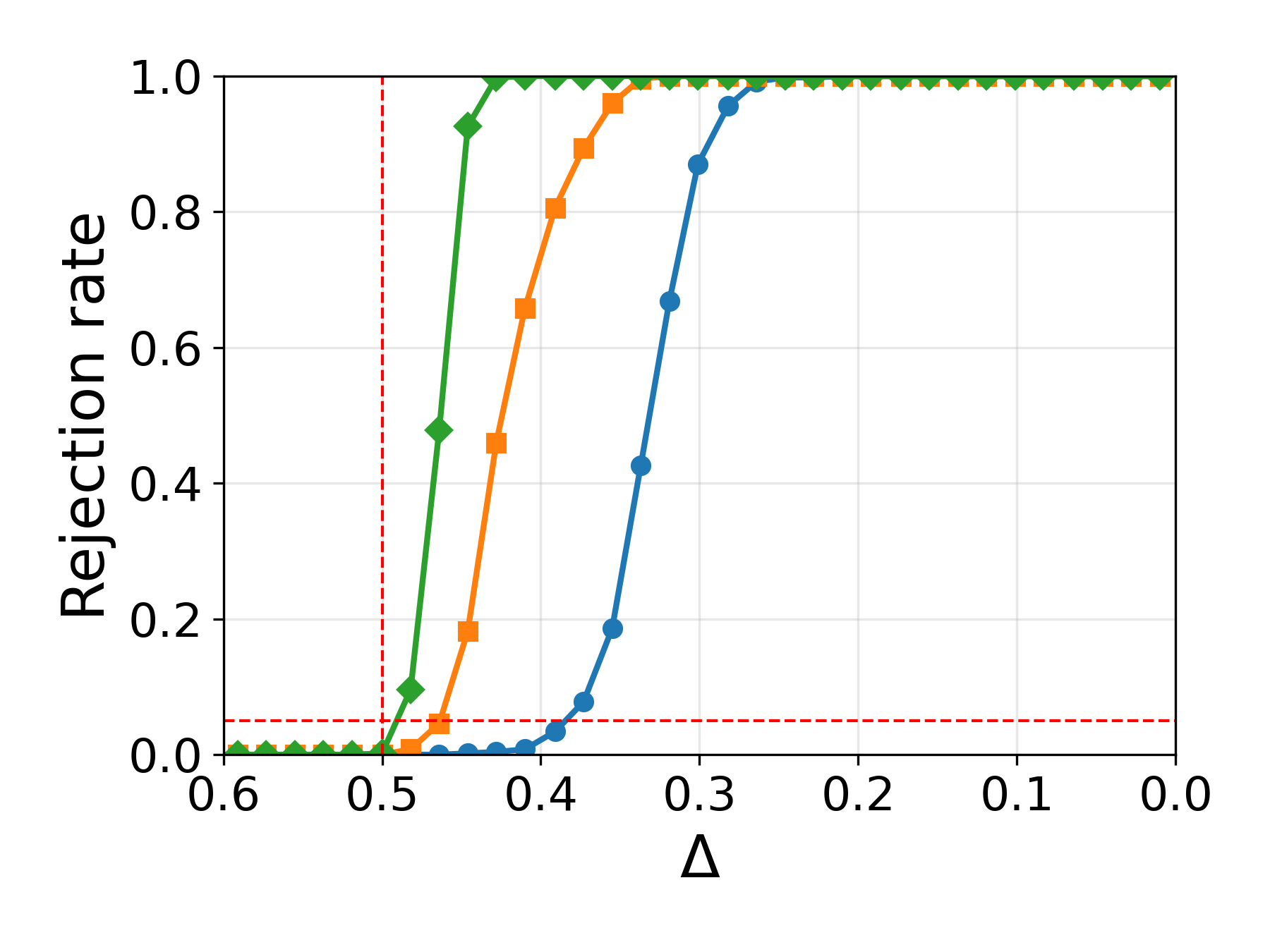}
\end{tabular}

\vspace{-0.3cm}

\makebox[\linewidth][c]{%
  \includegraphics[width=0.6\linewidth]{figures/legend_sample_size_left.png}
}
\vspace{-0.6cm}
\caption{\it Empirical rejection probabilities of the test defined by Algorithm \ref{alg:HD_test} for different privacy parameters $\rho=0.1,0.25,1$ and models \textbf{U1)} (first row) and \textbf{U2)} (second row) with $n \in \{250,500,1000\}$, $p=d(d-1)/2$ with $d=\lceil \sqrt{2n}\rceil$ (moderate dimensional regime).}
\label{fig:power_curves_grid_high_unfav}
\end{figure}
\begin{figure}[H]
\centering

\begin{tabular}{@{}ccc@{}}
\includegraphics[width=0.32\linewidth]{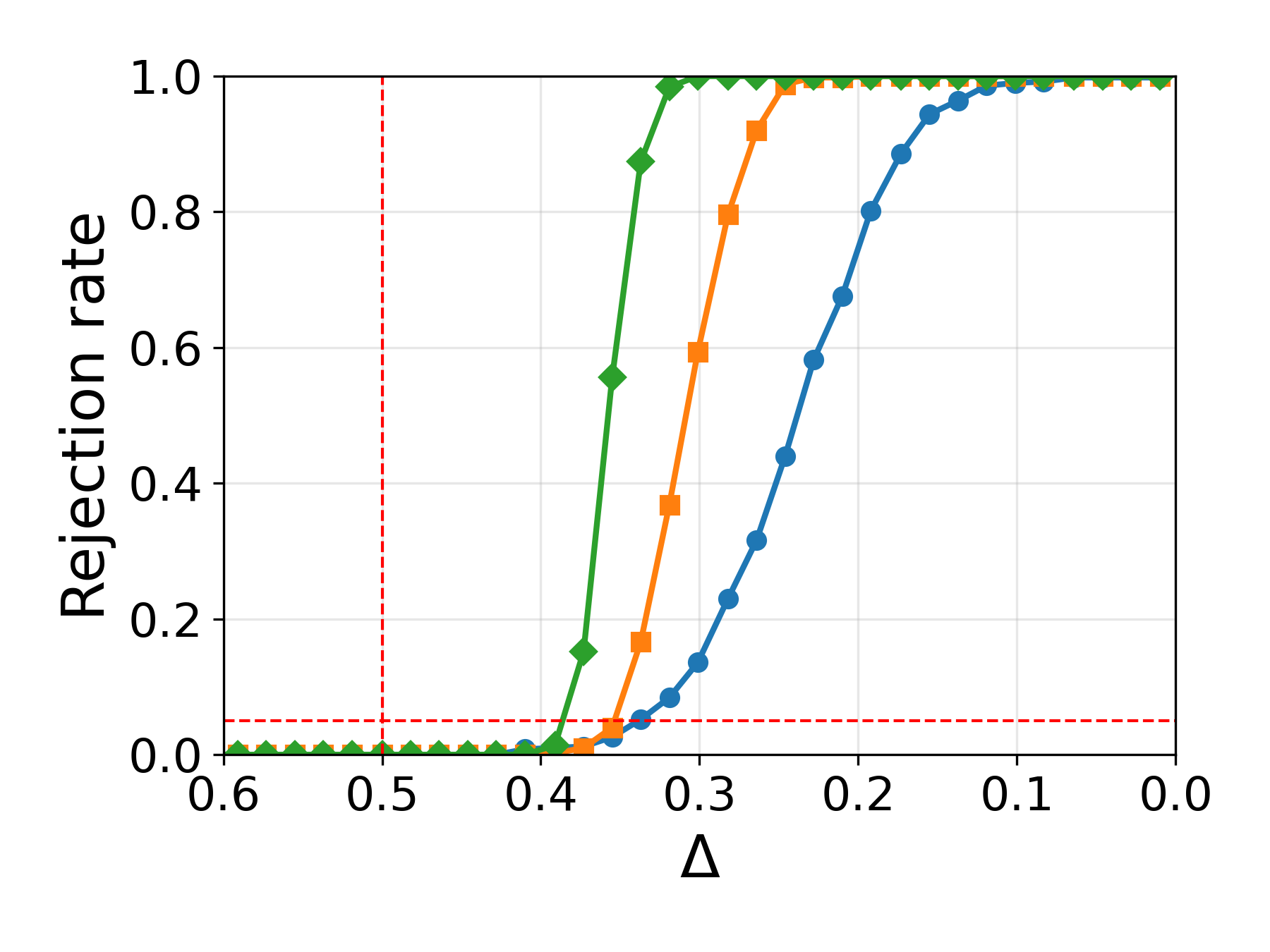} &
\includegraphics[width=0.32\linewidth]{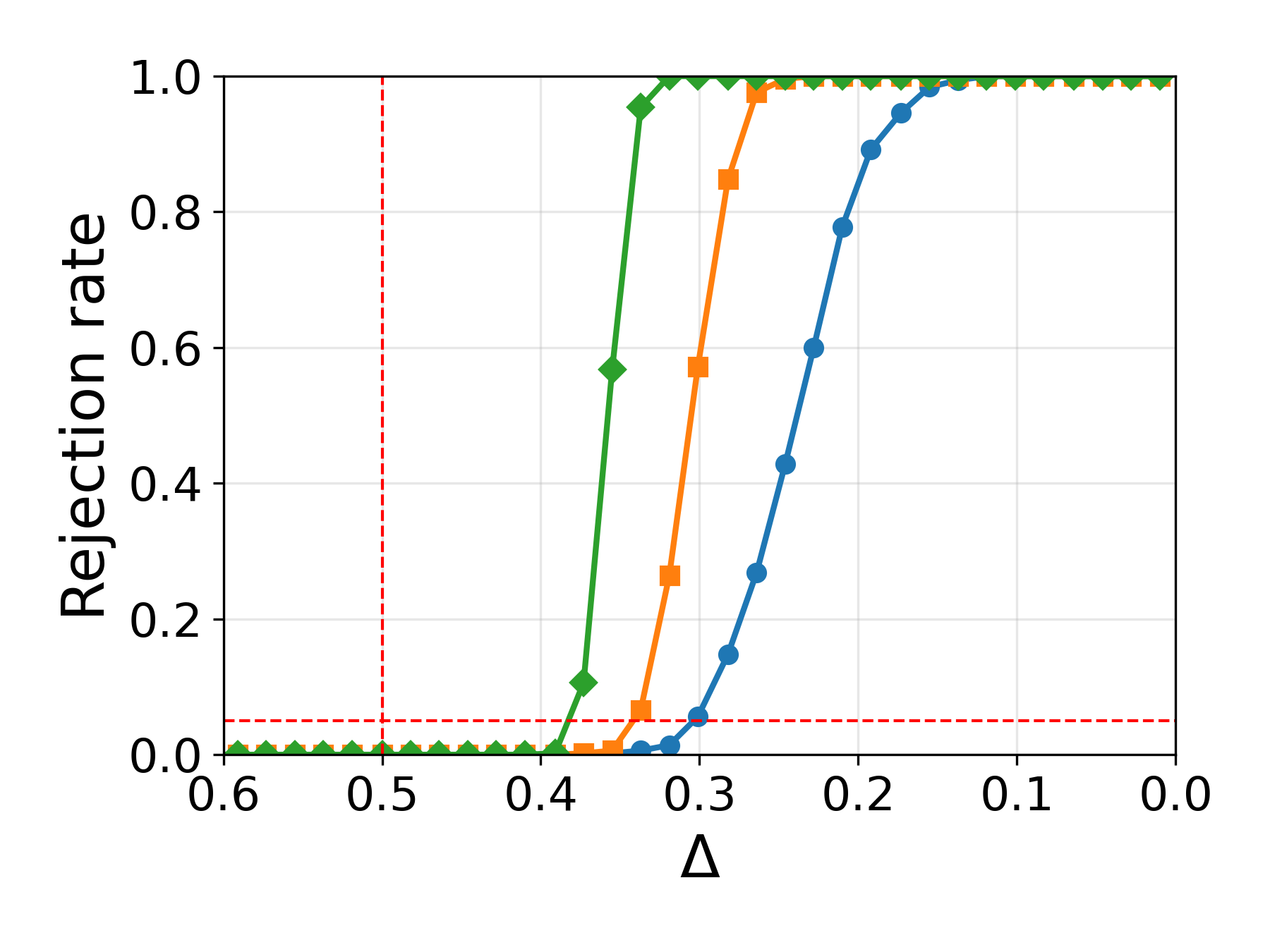} &
\includegraphics[width=0.32\linewidth]{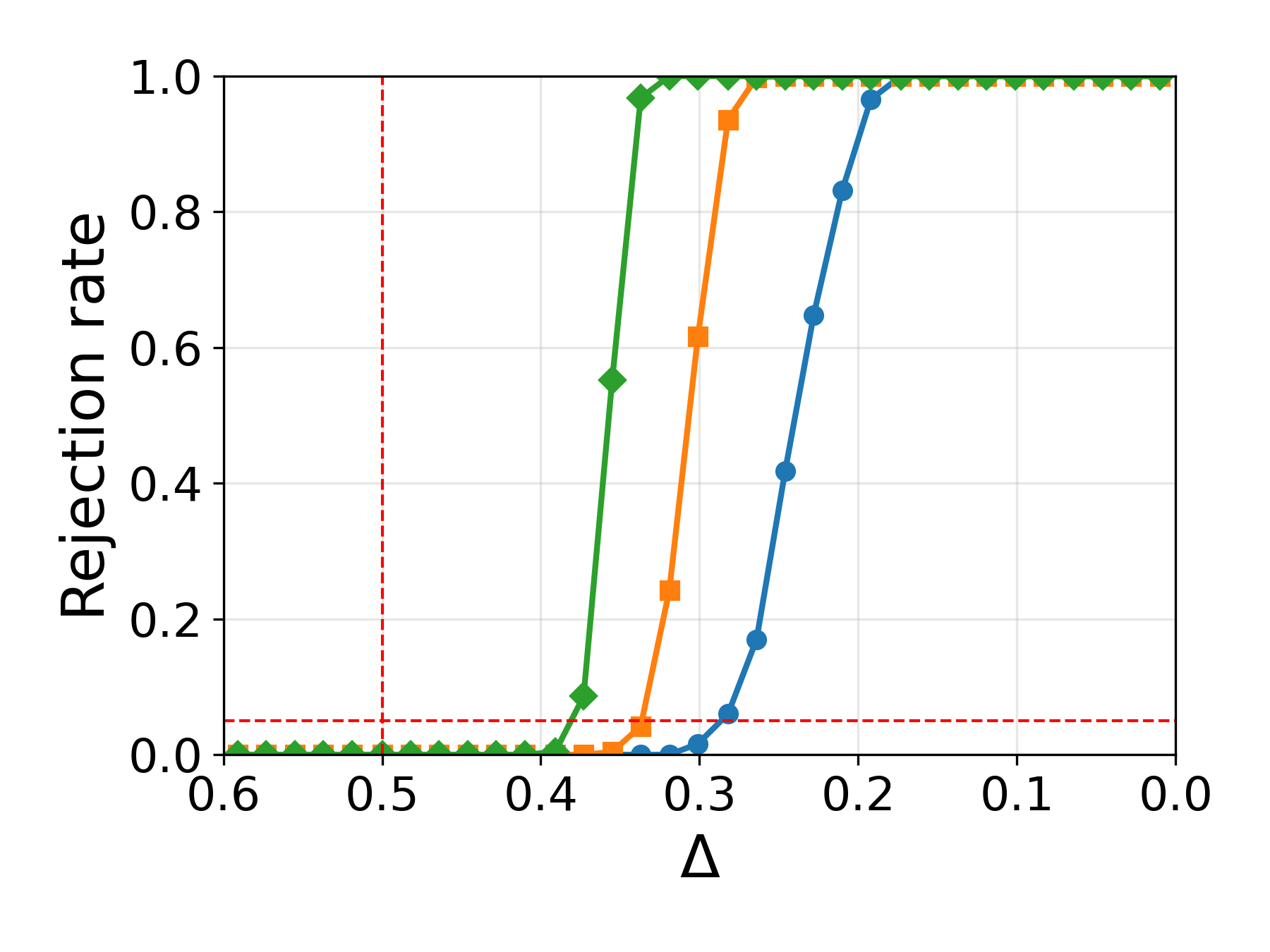} \\[-0.6cm]

\includegraphics[width=0.32\linewidth]{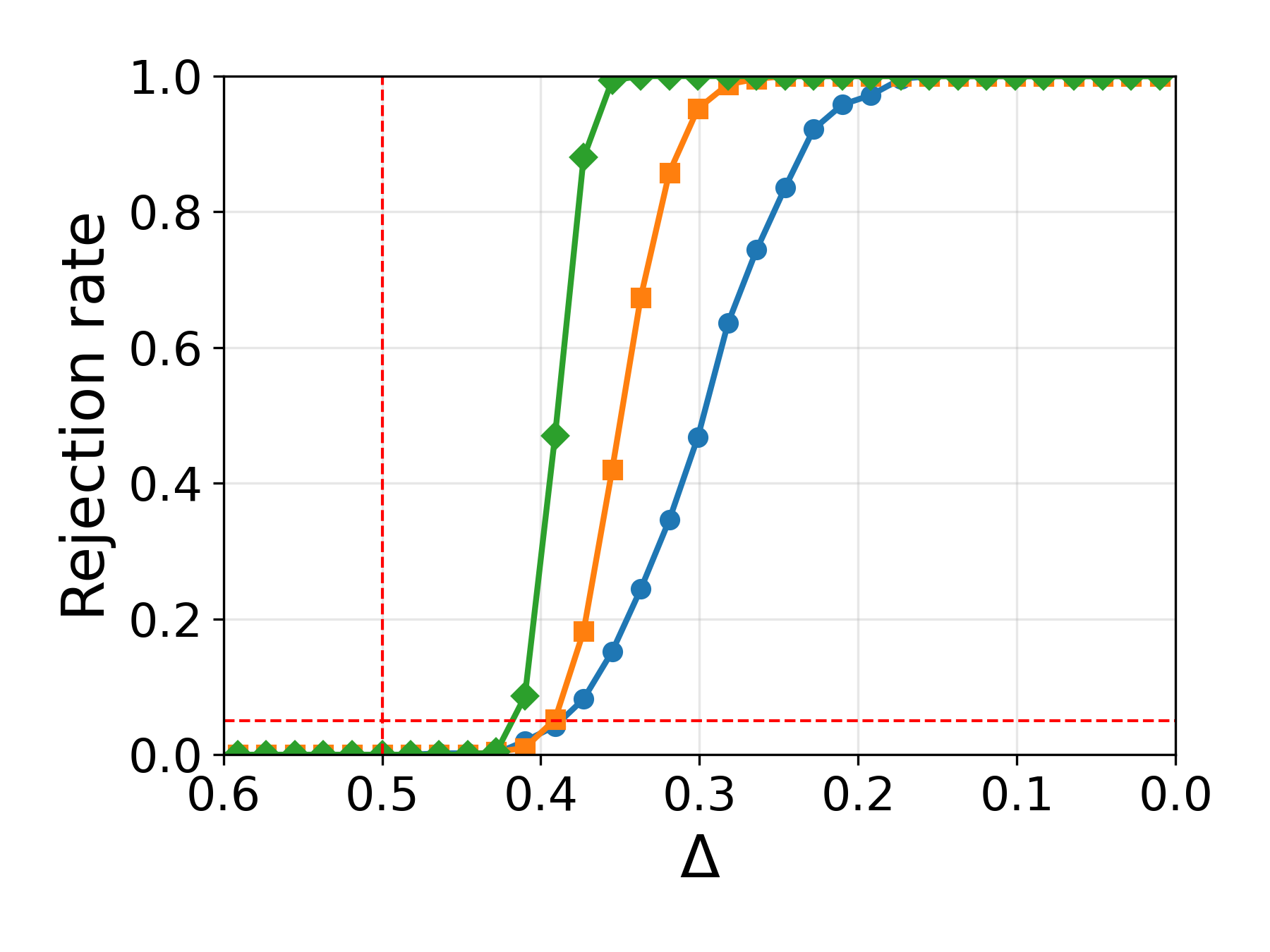} &
\includegraphics[width=0.32\linewidth]{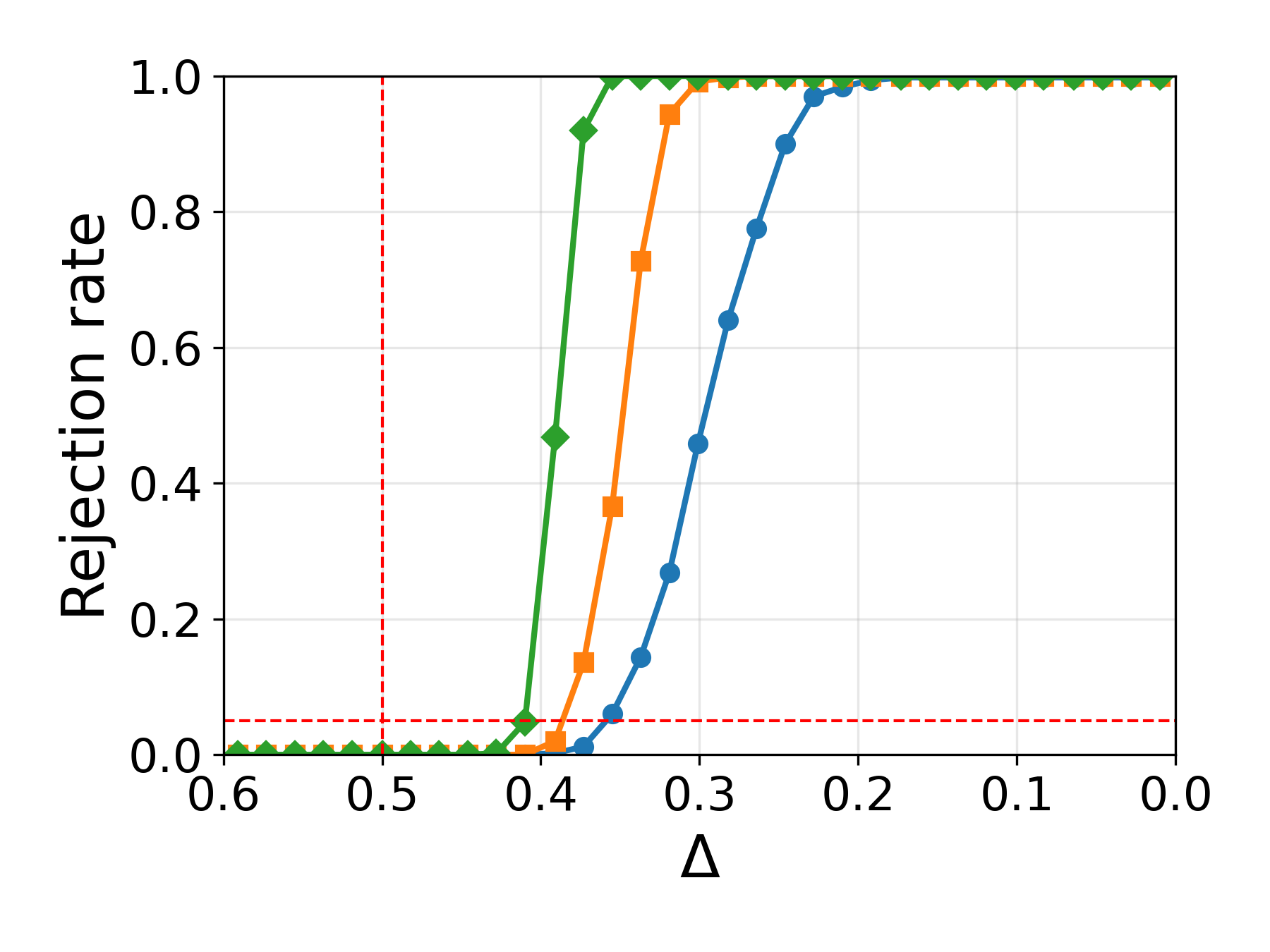} &
\includegraphics[width=0.32\linewidth]{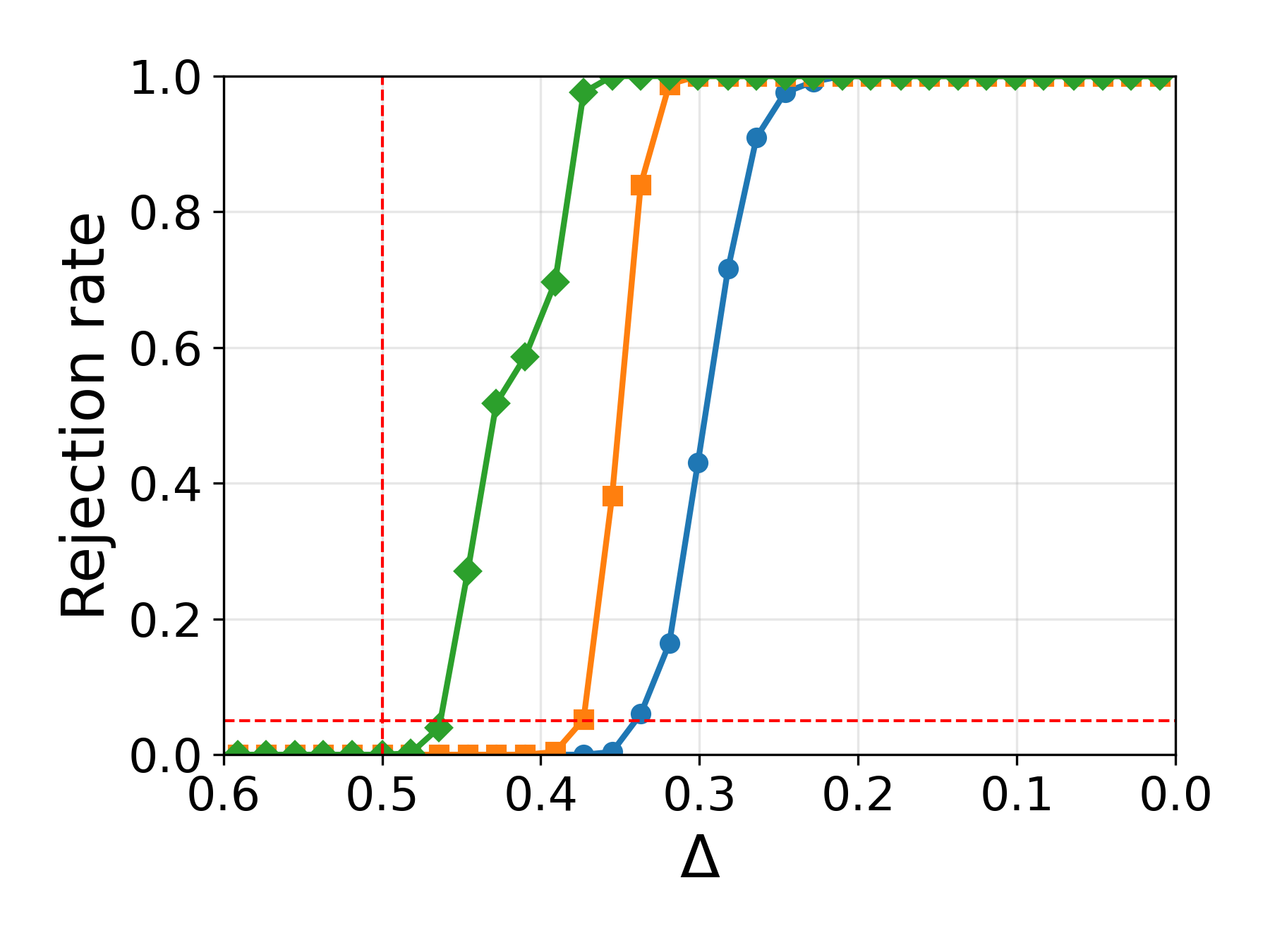}
\end{tabular}

\vspace{-0.3cm}

\makebox[\linewidth][c]{%
  \includegraphics[width=0.6\linewidth]{figures/legend_sample_size_left.png}
}
\vspace{-0.6cm}
\caption{\it Empirical rejection probabilities of the test defined by Algorithm \ref{alg:HD_test} for different privacy parameters $\rho=0.1,0.25,1$ and models \textbf{U1)} (first row) and \textbf{U2)} (second row) with $n \in \{250,500,1000\}$, $p=d(d-1)/2$ with $d=n$ (high-dimensional regime).}
\label{fig:power_curves_grid_high_unfav_high}
\end{figure}

\smallskip
\textbf{Performance compared to non-private state of the art}
In Figure \ref{fig:power_curves_grid_comp}  we display the rejection probabilities of the test defined by Algorithm \ref{alg:HD_test} and  the non-private test proposed in equation (2.27) of  \cite{patrick_annals}, where we consider   the settings \textbf{F1)} and \textbf{F2)} from Section \ref{simul}. In  \textbf{F2)} only a few coordinates carry a signal, and we observe that the new  private procedure can even outperform the non-private state-of-the-art test. This superiority  arises because the methodology in \cite{patrick_annals}  does not attempt to estimate the extremal set, and thus suffers from the difficulty discussed in Section \ref{sec312}. In the dense setting \textbf{F1)}, however, we begin to see the effect of privacy more clearly, particularly for smaller sample sizes. Here the effect of not estimating the extremal set becomes less pronounced and the loss of power by the additional privacy noise becomes more clearly visible.
\begin{figure}[H]
\centering

\begin{tabular}{@{}cc@{}}
\includegraphics[width=0.4\linewidth]{figures/power_merged_eps1.0_d45_M2.png} &
\includegraphics[width=0.4\linewidth]{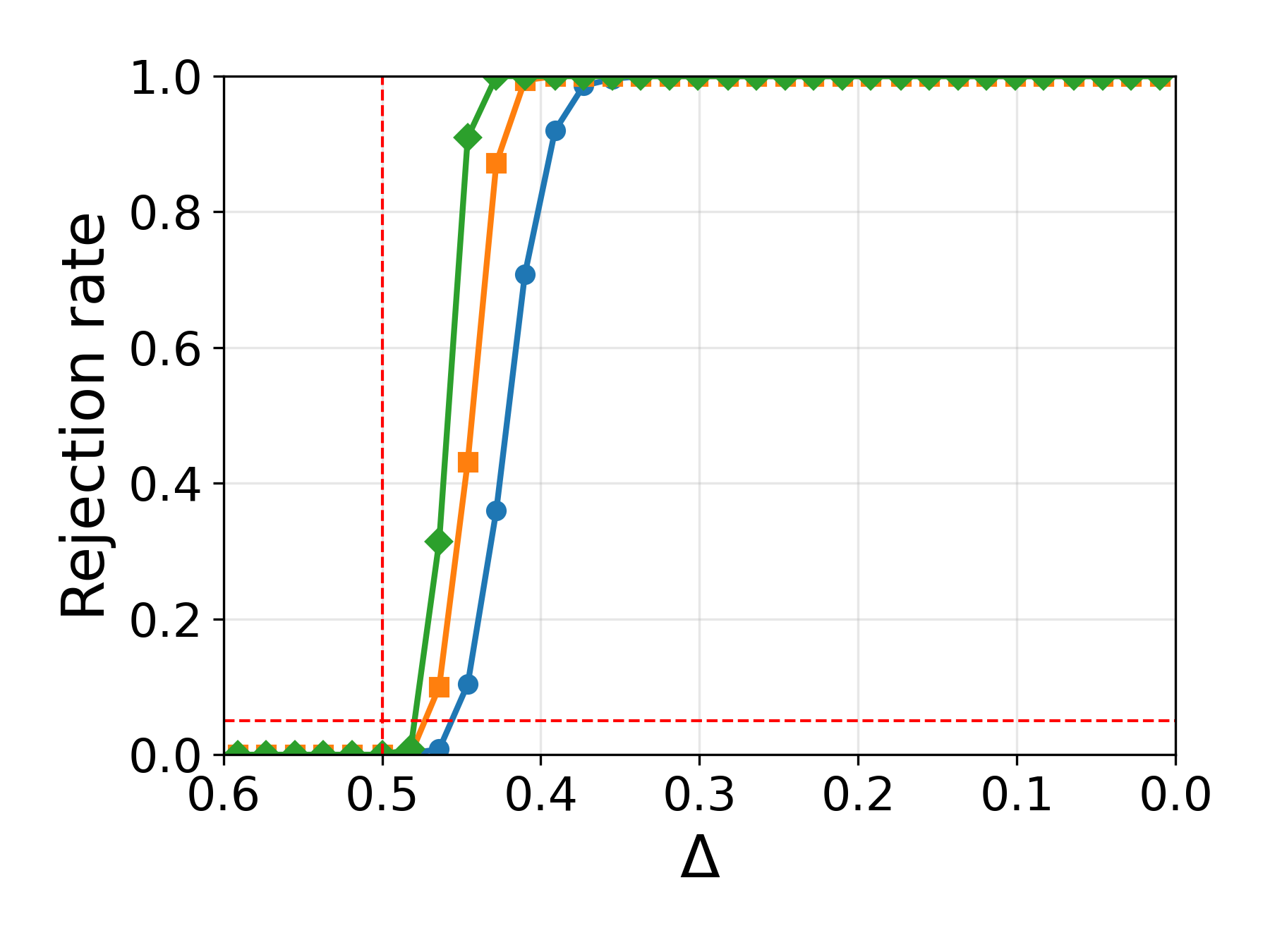} \\[-0.62cm]

\includegraphics[width=0.4\linewidth]{figures/power_merged_eps1.0_d45_M3.png} &
\includegraphics[width=0.4\linewidth]{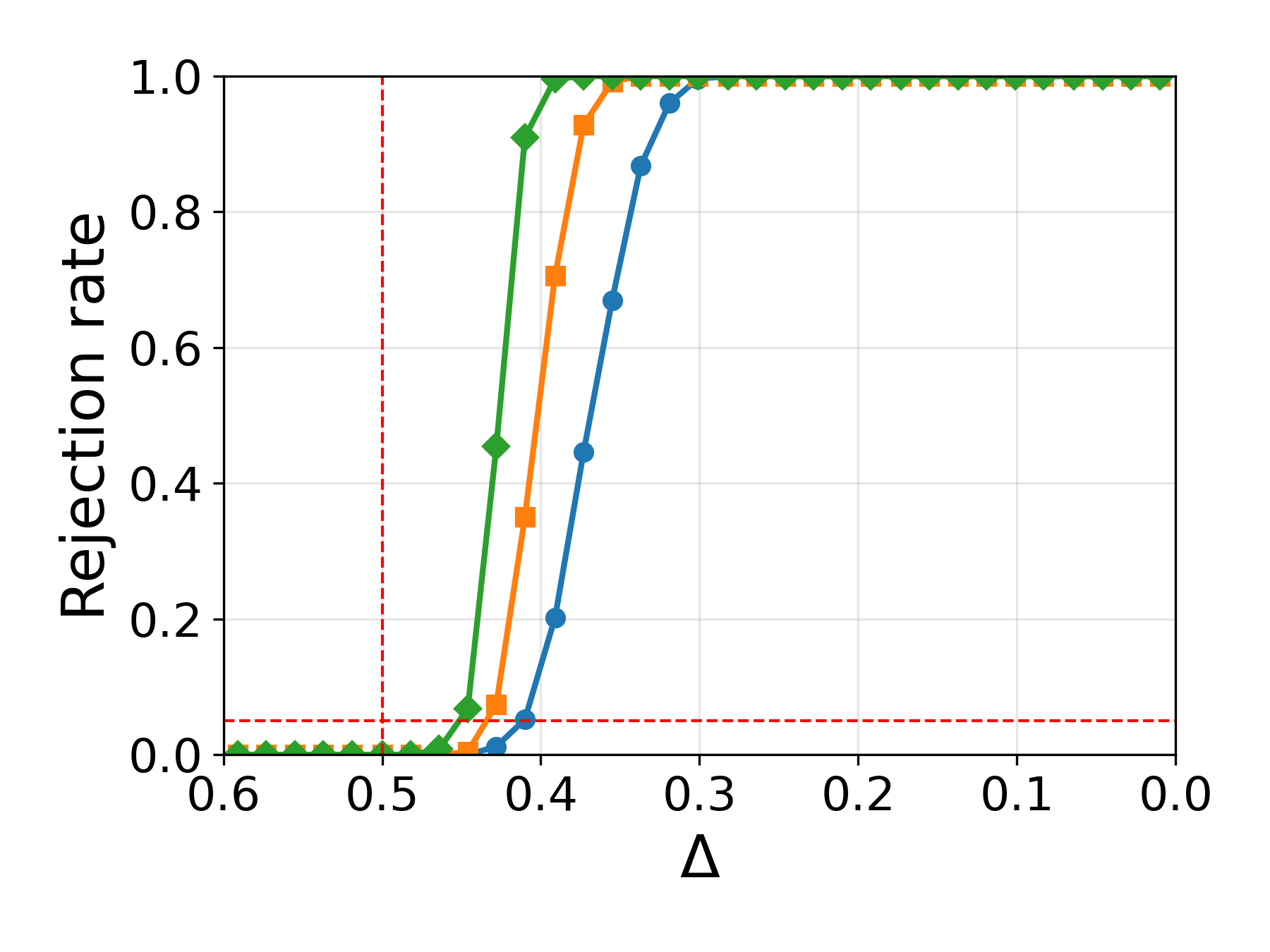} \\[-0.2cm]
\parbox{0.45\linewidth}{\centering\small (a) Algorithm \ref{alg:HD_test}} &
\parbox{0.45\linewidth}{\centering\small (b) Non-private test proposed in \cite{patrick_annals}}
\end{tabular}
\vspace{0.1cm}

\makebox[\linewidth][c]{%
  \includegraphics[width=0.6\linewidth]{figures/legend_sample_size_left.png}
}
\vspace{-0.6cm}
\caption{\it Empirical rejection probabilities of the test defined by Algorithm \ref{alg:HD_test} and the test based on \cite{patrick_annals} for privacy parameters $\rho=1$ in setting \textbf{F1)} (first row) and \textbf{F2)} (second row) with $n \in \{250,500,1000\}$ and $p=d(d-1)/2$ for $d=\lceil\sqrt{2n}\rceil$ (moderate dimensional regime).}
\label{fig:power_curves_grid_comp}
\end{figure}

\section{Finite dimensional methodology}\label{App:finite_dimensional}
  \def\theequation{D.\arabic{equation}}	
	\setcounter{equation}{0}
   
In this section, we will prove that for finite dimension $p$, the decision rule \eqref{eq_finite_dim} defines a consistent asymptotic level $\alpha$ test for the hypotheses \eqref{eq_rel_hyp_high}. Following 
 \cite{Dunsche2025}, we first state some assumptions. 
\begin{assumption}\label{As_U-stats_old} ~~
\begin{itemize}
\item [(1)] 
Let $X_1,\hdots, X_n\sim F$ be iid random vectors, where $F$ is a distribution on the d-dimensional cube $[-m,m]^d$, $m\in \R,d\in \N$.
\item [(2)]  Let $U=(U_1,\hdots, U_p)^\top$, where each component $U_j$ is a U-statistic with kernel $h_j$ of order $r(j)$. Then, we assume that for all $j=1,\hdots,p$
    \begin{equation*}
        \Var(\E[h_j(X_1,\hdots, X_r)|X_1])=\zeta_{1,j}>b,
    \end{equation*}
    where $b>0$ is fixed.
\item [(3)] For any kernel $h_j:([-m,m]^d)^{r(j)}\to \R$, $j=1,\hdots, p$, we assume
    \begin{equation*}
         \norm{h_j}_\infty\leq \frac{1}{2}L_\infty~,
    \end{equation*}
    where $L_\infty$ is a constant depending on $h_j, m$ for $j=1,\hdots, p$.
    \end{itemize}
\end{assumption}
Note that the assumptions above yield for any kernel $h$ that 
\begin{equation}\label{eq_bound}
    \norm{h(X)-h(X')}_2\leq L_2~,\quad \forall X,X'~,
\end{equation}
where $\norm{\cdot}_2$ denotes the euclidean norm and $L_2=(\sum_{j=1}^p L_\infty^2)^{1/2}$.
With that in hand, we can formulate a decision rule that is a consistent level $\alpha$ test:
\begin{theorem}[Monte Carlo]\label{Thm_Monte_Carlo_mult}
Assume Assumption \ref{As_U-stats_old} holds and let $\Delta>0$ be fixed. Furthermore consider a U-statistic $U$ with $\theta=\E_F[U]\in \R^p$. We define for a standard normal random variable $Y$
$$\norm{U}_{\infty}^{\text{DP}}:=\norm{U}_{\infty}+\frac{\Delta_2\norm{U}_{\infty}}{\sqrt{2 \rho}} Y~,$$ 
let $q_{1-\alpha}^*$ be the theoretical $(1-\alpha)$-quantile of $T_1^*$ approximated by Algorithm \ref{alg_monte_carlo_quantile_mult} with $\hat\zeta_1^{\text{DP}}=GaussCov(\hat\zeta_1,\rho,\Delta_2 \hat \zeta_1)$ (Algorithm \ref{alg_gausscov}). Then the decision rule "reject if 
\begin{equation}
\label{eq_U_dec_rule_normal_mult_mc}
   T^{\text{DP}}:=\sqrt{n}(\norm{U}_{\infty}^{\text{DP}}-\Delta)>q_{1-\alpha}^*"~,
\end{equation}
yields a consistent, asymptotic level-$\alpha$ test for hypotheses in equation \eqref{eq_rel_hyp_high}. That is for all $\norm{\theta}_\infty< \Delta $
$$
   \lim_{n \to \infty} \mathbb{P}_{\theta}(T^{\text{DP}}>q_{1-\alpha}^*) = 0,
$$
for all $\norm{\theta}_\infty= \Delta$
$$ \lim_{n \to \infty} \mathbb{P}_{\theta}(T^{\text{DP}}>q_{1-\alpha}^*) \leq \alpha
$$
(level $\alpha$). For all $\norm{\theta}_\infty>\Delta$ 
$$
    \lim_{n \to \infty} \mathbb{P}_{\theta}(T^{\text{DP}}>q_{1-\alpha}^*) = 1
$$
(consistency). Furthermore, the decision rule \eqref{eq_U_dec_rule_normal_mult_mc} is $2\rho$-zCDP.
\end{theorem}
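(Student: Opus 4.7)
The plan is to dispose of the privacy claim by composition, and then exploit that in fixed dimension the added privacy noise vanishes after rescaling, which reduces each of the three statistical claims to a standard weak-convergence argument for $U$-statistics.

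Privacy follows from composition of zCDP: the decision rule accesses the data only through the scalar Gaussian mechanism producing $\|U\|_\infty^{\text{DP}}$ (which is $\rho$-zCDP by Lemma \ref{Lem_steinke_gauss}, since the $\ell_2$ sensitivity of $\|U\|_\infty$ is bounded by $2rL_\infty/n$ using Assumption \ref{As_U-stats_old}) and the matrix Gaussian mechanism \textsc{Gausscov} producing $\hat\zeta_1^{\text{DP}}$ (which is $\rho$-zCDP by Algorithm \ref{alg_gausscov}). The bootstrap in Algorithm \ref{alg_monte_carlo_quantile_mult} uses only these two privatized outputs together with independent randomness, and is therefore post-processing. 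The zCDP composition rule yields the $2\rho$-zCDP claim.

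The same sensitivity bound gives $\sqrt{n}(\|U\|_\infty^{\text{DP}} - \|U\|_\infty) = O_{\PR}(1/\sqrt{n}) = o_{\PR}(1)$, and for fixed $p$ the consistency of the Jackknife (Lemma \ref{VarConv}) combined with the $O(1/n)$ per-entry sensitivity of \textsc{Gausscov} yields $\hat\zeta_1^{\text{DP}} \to r\zeta_1$ in probability in max-norm. Hence both the test statistic and the bootstrap quantile behave asymptotically as in the non-private case. When $\|\theta\|_\infty < \Delta$ the weak law of large numbers for $U$-statistics gives $\|U\|_\infty^{\text{DP}} \to \|\theta\|_\infty$ in probability, so $T^{\text{DP}} \to -\infty$ while $q_{1-\alpha}^*$ is $O_{\PR}(1)$, and the rejection probability tends to zero. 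When $\|\theta\|_\infty > \Delta$ the same argument with the inequality reversed yields $T^{\text{DP}} \to +\infty$ and hence consistency.

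The main technical point is the boundary case $\|\theta\|_\infty = \Delta$. Here I would apply the Hadamard directional differentiability of the sup-norm functional \citep[Theorem 2.1]{carcamo2020directional} in combination with the multivariate CLT for $U$-statistics and the negligibility of the privacy noise established above to obtain
\begin{equation*}
T^{\text{DP}} \ \Rightarrow \ \max_{i \in \mathcal{E}} \mathrm{sign}(\theta_i) W_i, \qquad W \sim \mathcal{N}(0, \Sigma),
\end{equation*}
where $\mathcal{E} = \{i : |\theta_i| = \Delta\}$ is the extremal set and $\Sigma$ denotes the limiting covariance of $\sqrt{n}(U - \theta)$. The bootstrap statistic $T_b^*$ is, conditionally on the data and in probability, asymptotically distributed as $\|W'\|_\infty = \max_i |W_i'|$ with $W'$ an independent copy of $W$; this uses the max-norm consistency of $\hat\zeta_1^{\text{DP}}$ together with continuity of the distribution of the Gaussian sup-norm in the covariance matrix. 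Since $\max_{i \in \mathcal{E}} \mathrm{sign}(\theta_i) W_i \leq \max_i |W_i|$ holds path-wise, the corresponding $(1-\alpha)$ quantiles are ordered the same way, and therefore $\limsup_{n \to \infty} \PR_{\theta}(T^{\text{DP}} > q_{1-\alpha}^*) \leq \alpha$. The hardest step is justifying the conditional convergence of the bootstrap quantile while simultaneously tracking the privacy noise, but once $\hat\zeta_1^{\text{DP}}$ is shown to be max-norm consistent this reduces to a routine continuity argument for the Gaussian sup-norm quantile as a function of the covariance at the true parameter.
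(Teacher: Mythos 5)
Your proposal follows the paper's proof essentially step by step: privacy via composition of the two Gaussian mechanisms plus post-processing; the boundary case via the Hadamard-directional Delta method for the sup-norm \citep[Theorem 2.1]{carcamo2020directional} to identify the limit $\max_{i\in\mathcal{E}}\mathrm{sign}(\theta_i)W_i$, upper-bounded path-wise by $\|W\|_\infty$; conditional consistency of the bootstrap quantile from max-norm consistency of $\hat\zeta_1^{\text{DP}}$; and the divergence arguments for the interior and alternative. The only items you gloss over that the paper spells out are the separate treatment of $\|\theta\|_\infty=0$ (which does not arise at the boundary but is part of the interior null) and the use of a quantile-convergence lemma (van der Vaart, Lemma 23.3) to pass from conditional weak convergence to convergence of $q_{1-\alpha}^*$ — both minor technicalities fully consistent with your stated plan.
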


\begin{proof}[Proof of Theorem \ref{Thm_Monte_Carlo_mult}]
 We first recall asymptotic results for the statistic $T_\theta^{\text{DP}} := \sqrt{n} (\norm{U}_\infty^{\text{DP}} - \norm{\theta}_\infty)$. First note that we have
 \begin{equation*}
     T_\theta^{\text{DP}}:=\sqrt{n}(\norm{U}_\infty^{\text{DP}}-\norm{\theta}_{\infty})+o_\PR(1)~.
 \end{equation*}
 Then using Hoeffdings CLT and the Delta method for Hadamard directional differential functionals \citep[see Theorem 2.1 in][]{carcamo2020directional}, we get for all $\norm{\theta}_\infty>0$ that
\begin{equation}\label{eq_asymptotic_mult_T_carlo}
    T_\theta^{\text{DP}} \overset{d}{\to} L=\max_{\substack{j=1,\hdots, s \\ |\theta_j|=\norm{\theta}_\infty}} sign(\theta_j)Z_j\leq \norm{Z}_\infty~,
\end{equation} 
where $Z\sim \mathcal N_p(0,\zeta_1)$. For $\norm{\theta}_\infty=0$, using the multivariate CLT of Hoeffding and the continuous mapping theorem yields
\begin{equation}\label{eq_asymptotic_mult_T_carlo_0}
    T_0^{\text{DP}} \overset{d}{\to} \norm{Z}_\infty~,
\end{equation}
where again $Z\sim \mathcal N_p(0,\zeta_1)$. Now let $q_{1-\alpha}^*$ be the theoretical quantile of $T_1^*$ approximated by Algorithm \ref{alg_monte_carlo_quantile_mult}. Since, we have used an upper bound $\norm{Z}_\infty$ in equation \eqref{eq_asymptotic_mult_T_carlo} for the resampling procedure in Algorithm \ref{alg_monte_carlo_quantile_mult}, we prove the consistency with respect to that upper bound. In particular, we know that 
\begin{equation*}
    d_K(\mathcal{L}(T_1^*|X_1,\hdots, X_n),\mathcal{L}(\norm{Z}_\infty))\overset{\PR}{\to} 0~,
\end{equation*}
where $d_K$ denotes the Kolmogorov distance and we used that $T_1^*\sim \max_{1\leq j\leq p} |Z'|+\sqrt{n} Y$, with $Z' \sim \mathcal N_p(0,\hat\zeta_1^{\text{DP}})$ and $Y\sim \mathcal N(0, (\frac{\Delta_2 \norm{U}_\infty}{\sqrt{2\rho}})^2)$ conditional on $X_1,\hdots, X_n$. 
This is indeed true due to the continuous mapping theorem and the conditional version of Slutsky's Lemma. Therefore, Lemma 23.3 in \cite{van2000asymptotic} yields that as $n\to\infty$ we have 
\begin{equation*}
    q_{1-\alpha}^*\overset{\PR}{\to} z_{1-\alpha}~
\end{equation*}
conditional on $X_1,\hdots, X_n$, where $z_{1-\alpha}$ is the $(1-\alpha)$-quantile of $\norm{Z}_\infty$. Consequently, there also exists a subsequence for which almost sure convergence holds, i.e. $q_{1-\alpha}^*\overset{a.s}{\to} z_{1-\alpha}$ on that subsequence. We can use the almost sure convergence of the bootstrap quantile $q_{1-\alpha}^*$ and the convergence of $T^{\text{DP}}_\theta$ combined with Slutsky's Lemma, to obtain
\begin{equation}\label{eq_boundedness_mult}
\lim_{n\to\infty}\PR(T^{\text{DP}}_\theta> q_{1-\alpha}^*)\leq\PR(\norm{Z}_\infty> z_{1-\alpha})=\alpha~.
\end{equation}
With that in hand, we can prove the statements of Theorem \ref{Thm_Monte_Carlo_mult} for $T^{\text{DP}}$ by considering four different cases:
\begin{itemize}
    \item [(1)] $0< \norm{\theta}_\infty < \Delta~,$
    \item [(2)] $\norm{\theta}_\infty=0~,$
    \item [(3)] $\norm{\theta}_\infty=\Delta~,$
    \item [(4)] $\norm{\theta}_\infty>\Delta~.$
\end{itemize}
For that purpose let us first decompose $T^{\text{DP}}$:
\begin{equation}\label{eq_expansion_mult}
     T^{\text{DP}}=T_\theta^{\text{DP}}+\sqrt{n}(\norm{\theta}_\infty-\Delta)~.
\end{equation}
\noindent (1) Consider the first case $0< \norm{\theta}_\infty < \Delta$. Since $q_{1-\alpha}^*$ is stochastically bounded, we can conclude that for every $M>0$ there exist an $n_0\in \N$ such that for all $n\geq n_0$ we have
\begin{equation*}
    \PR(T^{\text{DP}}> q_{1-\alpha}^*)=\PR(T_\theta^{\text{DP}}>q_{1-\alpha}^*-\sqrt{n}(\norm{\theta}_\infty-\Delta))\leq \PR(T_{\theta}^{\text{DP}}> M)~,
\end{equation*}
where we used that $\norm{\theta}_\infty-\Delta<0$. Now taking the limit for $n\to\infty$ on both sides yields
\begin{equation*}
    \limsup_{n\to\infty} \PR(T^{\text{DP}}> q_{1-\alpha}^*)\leq \PR(\norm{Z}_\infty> M)~,
\end{equation*}
where $Z\sim \mathcal{N}_p(0,\zeta_1)$. Hence, taking the limit for $M\to\infty$ lets us conclude that
\begin{equation*}
\lim_{n\to\infty}\PR(T^{\text{DP}}> q_{1-\alpha}^*)=0~.
\end{equation*}
\noindent (2) If $\norm{\theta}_\infty=0$, we have $T_\theta = T_0$ and can follow with the same arguments that for any $M>0$
\begin{equation*}
    \PR(T^{\text{DP}}> q_{1-\alpha}^*)=\PR(T_0^{\text{DP}}> q_{1-\alpha}^*+\sqrt{n}\Delta)\leq \PR(T_{0}^{\text{DP}}> M)
\end{equation*}
for $n$ sufficiently large, since $\Delta>0$. Consequently, observing \eqref{eq_asymptotic_mult_T_carlo_0}, the same arguments as in (1) give  
\begin{equation*}
    \lim_{n\to\infty}\PR(T^{\text{DP}}> q_{1-\alpha}^*)=0~.
\end{equation*}
Therefore, we obtain with (1) and (2) for all $\norm{\theta}_\infty<\Delta$ that
\begin{equation*}
    \lim_{n\to\infty}\PR(T^{\text{DP}}> q_{1-\alpha}^*)=0~.
\end{equation*}
\noindent (3) For $\norm{\theta}_\infty=\Delta$, we have that $T^{\text{DP}}=T_\theta^{\text{DP}}$, and we obtain from  \eqref{eq_boundedness_mult} to obtain
\begin{equation*}
   \limsup_{n\to\infty} \PR(T^{\text{DP}}> q_{1-\alpha}^*)= \lim_{n\to\infty} \PR(T_\theta^{\text{DP}}> q_{1-\alpha}^*)\leq \alpha~.
\end{equation*}
\noindent (4) For $\norm{\theta}_\infty>\Delta$, we  use the decomposition  \eqref{eq_expansion_mult} and equation \eqref{eq_boundedness_mult} to conclude that for any $M>0$
\begin{equation*}
     \PR(T^{\text{DP}}> q_{1-\alpha}^*)=\PR(T_\theta^{\text{DP}}> q_{1-\alpha}^*-\sqrt{n}(\norm{\theta}_\infty-\Delta))\geq \PR(T_{\theta}^{\text{DP}}>-M)
\end{equation*}
for $n$ sufficiently large. Taking the limit on both sides for $n\to\infty$, we have
\begin{equation*}
    \liminf_{n\to\infty}  \PR(T^{\text{DP}}> q_{1-\alpha}^*)\geq \PR(\norm{Z}_\infty>-M)~,
\end{equation*}
where again we have used the upper bound in equation \eqref{eq_asymptotic_mult_T_carlo}. Now taking the limit for $M\to\infty$ yields
\begin{equation*}
     \lim_{n\to\infty}\PR(T^{\text{DP}}\geq q_{1-\alpha}^*)=1~,
\end{equation*}
where again we have used that $\norm{Z}_\infty$ is tight. The privacy guarantee holds, due to the sensitivity of $U$ derived in Lemma \ref{Lem_sense_U}. For the private covariance, we can use Lemma \ref{Lem_jackknife_priv}. The result finally  follows from  the composition theorem for $\rho$-zCDP mechanisms.
\end{proof}

\section{Proofs of privacy statements}

  \def\theequation{E.\arabic{equation}}	
	\setcounter{equation}{0}

\subsection{Auxiliary results}

\begin{Lemma}\label{lem:local_sensitivity}
For any fixed $l\in \{1,\hdots, p-1\}$ for which $q_l(X)>4\frac{r}{n}L_\infty$, the local $\ell_2$ sensitivity of $\indvec$ is $0$.
\end{Lemma}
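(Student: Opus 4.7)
The plan is to exploit the observation that for a bounded-kernel $U$-statistic of order $r$, a single-observation change perturbs each coordinate $|U_j|$ by at most $2rL_\infty/n$, and then to show that whenever the gap $q_l(X)$ exceeds twice this perturbation bound, the top-$l$ index set cannot change.

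First I would fix neighboring datasets $X \sim X'$ and condition on the (previously released) $\hat k = l$, so that the indicator vector $\indvec$ simply picks out the top-$l$ coordinates of $|U|$. Using that each summand of a $U$-statistic is bounded by $L_\infty$ in absolute value and that replacing one observation alters at most $\binom{n-1}{r-1}$ of the $\binom{n}{r}$ summands, a standard calculation gives
\begin{equation*}
\max_{1\le j\le p} \bigl| |U_j(X)| - |U_j(X')| \bigr| \;\le\; \frac{2 r L_\infty}{n}.
\end{equation*}

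Next, denote the top-$l$ index set on $X$ by $S_l(X):=\{j:\,|U_j(X)|\ge |U|_{(l)}(X)\}$. For any $j\in S_l(X)$ and $j'\notin S_l(X)$ I would chain the coordinatewise perturbation bound with the definition of $q_l(X)$:
\begin{align*}
|U_j(X')| - |U_{j'}(X')|
&\ge \bigl(|U_j(X)| - 2rL_\infty/n\bigr) - \bigl(|U_{j'}(X)| + 2rL_\infty/n\bigr)\\
&\ge |U|_{(l)}(X) - |U|_{(l+1)}(X) - 4rL_\infty/n\\
&= q_l(X) - 4rL_\infty/n \;>\; 0,
\end{align*}
by hypothesis. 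Hence every member of $S_l(X)$ still ranks strictly above every non-member under $X'$, forcing $S_l(X')=S_l(X)$. The indicator vectors therefore agree, $\indvec=\indvecp$, so their $\ell_2$ distance is $0$, which establishes the claim.

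The only genuine subtlety — which I would not expect to cause real trouble but worth a brief comment — is tie-handling at the boundary: if several coordinates share the value $|U|_{(l)}(X)$, one must specify a deterministic tie-breaking rule (e.g.\ smaller index first) in the definition of $\hat{\mathbbm{1}}_j$ and verify that this rule is stable under the perturbation. The strict inequality $q_l(X)>4rL_\infty/n$ already excludes ties across the rank-$l$/rank-$(l+1)$ boundary on both $X$ and $X'$, so any internal tie-breaking among coordinates strictly above rank $l$ is irrelevant to membership in $S_l$ and the argument goes through unchanged.
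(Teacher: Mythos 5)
Your proposal is correct and follows essentially the same argument as the paper's proof: bound the per-coordinate perturbation of each $|U_j|$ by $2rL_\infty/n$ under a single-record change, and conclude that a gap exceeding $4rL_\infty/n$ at rank $l$ forces the top-$l$ index set (though possibly not its internal ordering) to be identical on neighboring datasets, so $\indvec=\indvecp$ and the local $\ell_2$ sensitivity is $0$. Your explicit chaining inequality and the remark on tie-breaking are just a more detailed write-up of the same reasoning.
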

\begin{proof}
    Let $l\in\{1,\hdots, p-1\}$ be arbitrary but fixed. We have to prove that $\indvec$ and $\indvecp$ remain unchanged (i.e. the set of indices of the $l$ U-statistics with largest absolute value do not change) if we alter one entry of $X$. Changing any single entry of $X$ at worst yields a change of size $2\frac{r}{n}L_\infty$ for $|U|_{(l)}$ and $|U|_{({l+1})}$. Hence, if the gap between those two is lower bounded by $q_l(X)>4\frac{r}{n}L_\infty$, the index set does not change. Here it is important to note that the order of $|U|_{(1)},\hdots, |U|_{(l)}$ may indeed change, but the set of the corresponding indices does not. Consequently, we have 
      $$\indvec=\indvecp~,$$ 
    which yields the desired result 
    \begin{equation*}
        \max_{X', d_H(X,X')=1}\norm{\indvec-\indvecp}_2=0~.
    \end{equation*}
\end{proof}

\begin{Lemma}
\label{Lem_sense_U}
     Assume that Assumption \ref{As_Sparse} holds and let $Y\sim \mathcal N(0, \frac{(2L_\infty r/n)^2}{2 \rho} )$. Then, 
    \begin{equation*}
        \norm{U}_\infty^{\text{DP}}:= \norm{U}_\infty+Y
    \end{equation*}
    is $\rho-$zCDP.
\end{Lemma}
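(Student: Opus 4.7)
The statement is a direct application of the Gaussian mechanism (Lemma \ref{Lem_steinke_gauss}) to the scalar query $T(X) = \norm{U(X)}_\infty$. Since this is a one-dimensional output, its $\ell_2$ sensitivity coincides with $\sup_{X \sim X'} |T(X) - T(X')|$, and it suffices to prove the bound
\begin{equation*}
\Delta_2 \norm{U}_\infty \;=\; \sup_{X \sim X'} \bigl| \norm{U(X)}_\infty - \norm{U(X')}_\infty \bigr| \;\leq\; \frac{2 r L_\infty}{n}~.
\end{equation*}

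\textbf{Key steps.} First I would apply the reverse triangle inequality to reduce the sensitivity of the maximum norm to a coordinate-wise bound, namely
\begin{equation*}
\bigl| \norm{U(X)}_\infty - \norm{U(X')}_\infty \bigr| \;\leq\; \norm{U(X) - U(X')}_\infty \;=\; \max_{1 \leq i \leq p} |U_i(X) - U_i(X')|~.
\end{equation*}
Second, I would bound $|U_i(X) - U_i(X')|$ for each component separately. Writing $U_i$ as an order-$r$ $U$-statistic with kernel $h_i$, changing a single observation (say replacing $X_{l_0}$ by $X_{l_0}'$) affects exactly those summands in the defining sum of $U_i$ that involve index $l_0$; there are $\binom{n-1}{r-1}$ such summands. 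By Assumption (B) each kernel evaluation lies in $[-L_\infty, L_\infty]$, so each affected summand changes by at most $2 L_\infty$. Hence
\begin{equation*}
|U_i(X) - U_i(X')| \;\leq\; {n \choose r}^{-1} \binom{n-1}{r-1} \cdot 2 L_\infty \;=\; \frac{2 r L_\infty}{n}~,
\end{equation*}
uniformly in $i$, which gives the claimed $\ell_2$ sensitivity bound.

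Third, I would invoke Lemma \ref{Lem_steinke_gauss} with $d=1$, $T(X) = \norm{U(X)}_\infty$ and $\Delta_2 T \leq 2 r L_\infty / n$; then the mechanism $T(X) + \frac{\Delta_2 T}{\sqrt{2\rho}} Y$ with $Y \sim \mathcal{N}(0,1)$ preserves $\rho$-zCDP. Since adding Gaussian noise with a larger variance is equivalent to post-processing the output of the standard Gaussian mechanism (scaling is a deterministic map, so zCDP is preserved), the conclusion follows for the stated noise level $(2L_\infty r/n)^2/(2\rho)$. There is no substantial obstacle here; the only subtlety worth stating cleanly is that the maximum-norm sensitivity is controlled by the \emph{uniform} coordinate-wise sensitivity, which is why Assumption (B) (boundedness of every $h_i$ by the same constant $L_\infty$) is used rather than just $\|h_i\|_\infty < \infty$ for each $i$.
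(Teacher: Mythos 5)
Your proposal is correct and follows essentially the same route as the paper: both bound the sensitivity via $\bigl|\norm{U(X)}_\infty-\norm{U(X')}_\infty\bigr|\le\max_{1\le i\le p}|U_i(X)-U_i(X')|$ and the coordinate-wise count $\binom{n}{r}^{-1}\binom{n-1}{r-1}\cdot 2L_\infty=2rL_\infty/n$ under Assumption (B), and then invoke the Gaussian mechanism of Lemma \ref{Lem_steinke_gauss}. One cosmetic remark: the final post-processing detour is unnecessary (and ``scaling is a deterministic map'' is not quite the right justification), since calibrating the noise to any upper bound on $\Delta_2\norm{U}_\infty$ already yields the Rényi-divergence bound $\alpha\rho$ directly.
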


\begin{proof}
In order to bound the sensitivity $\Delta_2 \norm{U}_\infty$, we first consider two neighboring data sets $X,X'$ which differ without loss of generality in the last component. By definition, we have that
\begin{align*}
       & \left|\norm{U}_\infty-\norm{U'}_\infty\right|\leq\max_{1\leq j\leq p}|U_j-U_j'|\leq \frac{2r}{n}L_\infty~,
\end{align*}
where we have used that, by Assumption \ref{As_Sparse} (B), for $j=1, \ldots , p$:
\begin{align*}
       |U_j-U'_j|&=\binom{n}{r}^{-1} \left|\sum_{1\leq i_1<\hdots< i_r\leq n} h_j(X_{i_1},\hdots, X_{i_r})- \sum_{1\leq i_1<\hdots< i_r\leq n} h_j(X_{i_1}',\hdots, X_{i_r}')\right|
        \\&= \binom{n}{r}^{-1} \left|\sum_{1\leq i_1<\hdots<i_{r-1}< i_r= n} h_j(X_{i_1},\hdots,X_{i_{r-1}}, X_{n})-  h_j(X_{i_1}',\hdots, X_{n}')\right|
        \\&\leq \binom{n}{r}^{-1} 2L_\infty \binom{n-1}{r-1} =\frac{2r}{n}L_\infty ~.
\end{align*}
\end{proof}

\begin{proof}
   Note that Algorithm \ref{alg:rep_noisy_max} can be implemented using  the 
   exponential mechanism.  Therefore, it suffices to analyze the sensitivity of $q_j+\nu(j)$. For two neighboring databases $X,X'$, we have
    \begin{equation*}
        |q_j(X)+\nu(j)-(q_j(X')+\nu(j))|\leq |U_{(j)}-U_{(j)}'+U_{(j+1)}-U_{(j+1)}'|\leq 4\frac{r}{n}L_\infty~,
    \end{equation*}
    where we have used that the $\ell_1$ sensitivity of an $U$-statistic with a kernel of order $l$  bounded by $L_{\infty}$ is bounded by $2\frac{r}{n}L_\infty$. This yields $\ve-$DP. For $\rho$-zCDP, we use the relation  between $\ve-$DP and $\rho-$zCDP given in \cite{cesar2021bounding}.
\end{proof}

\begin{Lemma}[Lemma 3.4 in \citep{Dunsche2025}]\label{Lem_jackknife_priv}
 Assume that Assumption \ref{As_Sparse} holds. Then the Jackknife variance estimator defined in equation \eqref{eq_jackknife} has sensitivity 
    \begin{equation}\label{eq_sense_var_jack}
       \Delta_2 \hat\zeta_1= \frac{(n-1)r}{n(n-r)}\sum_{c=0}^r \frac{\binom{n-r+c}{r-c}}{\binom{n-1}{r}}\binom{r}{c}|cn-r^2|\sqrt{2}dL_\infty^2~.
    \end{equation}
\end{Lemma}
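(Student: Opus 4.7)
The plan is to bound $\|\hat\zeta_1(X) - \hat\zeta_1(X')\|_F$ directly by decomposing the leave-one-out deviations into controllable combinatorial pieces. First I would rewrite $D^{(l)} := U^{(l)} - U$ in a single unified sum over $r$-subsets. Using the identity $\binom{n-1}{r}^{-1} = \frac{n}{n-r}\binom{n}{r}^{-1}$ and splitting $U_j = \binom{n}{r}^{-1} \sum_I h_j(X_I)$ according to whether $l \in I$ or $l \notin I$, one obtains
\begin{equation*}
D^{(l)}_j \;=\; \binom{n}{r}^{-1} \Bigl[\, \tfrac{r}{n-r}\!\!\sum_{I:\, l \notin I}\! h_j(X_I) \;-\!\! \sum_{I:\, l \in I}\! h_j(X_I)\, \Bigr].
\end{equation*}
This signed-sum representation is the right starting point because the two weights $r/(n-r)$ and $-1$ will later combine (in intersection-size-$c$ pairs of subsets) into the factor $|cn - r^2|$ appearing in the claim.

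Next I would fix neighboring datasets $X, X'$ differing, without loss of generality, only in the $n$-th entry, and track which summands of each $D^{(l)}$ are modified. For $l = n$, the first sum above is unchanged; for $l \neq n$, the summands affected are exactly those $I$ with $n \in I$, of which there are $\binom{n-2}{r-1}$ in the first sum and $\binom{n-2}{r-2}$ in the second. Each such affected summand changes by at most $2L_\infty$ by Assumption \ref{As_Sparse}(B). This gives an explicit representation of $D^{(l)}(X) - D^{(l)}(X')$ as a combinatorially controlled signed sum.

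The key step is to bound
\begin{equation*}
\hat\zeta_1(X) - \hat\zeta_1(X') \;=\; (n-1)\sum_{l=1}^n \Bigl[\, D^{(l)}(X)\,D^{(l)}(X)^\top - D^{(l)}(X')\,D^{(l)}(X')^\top \,\Bigr]
\end{equation*}
in Frobenius norm. Writing each bracketed term as $\Delta D^{(l)} \bar D^{(l),\top} + \bar D^{(l)} \Delta D^{(l),\top}$ with $\Delta D^{(l)} = D^{(l)}(X) - D^{(l)}(X')$ and $\bar D^{(l)}$ the corresponding symmetric average, the Frobenius norm is bounded coordinate-wise over the $d$ components of the kernel — this is where the factor $d L_\infty^2$ enters, with $\sqrt{2}$ coming from the symmetric rank-one-difference structure. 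Expanding the resulting double sum over pairs $(I, J)$ of $r$-subsets and grouping terms by the intersection size $c = |I \cap J|$ produces the enumerator $\binom{r}{c}\binom{n-r+c}{r-c}$, normalized by $\binom{n-1}{r}$ after the prefactors $\binom{n}{r}^{-1}$ and $(n-1)$ are collected into the multiplier $\frac{(n-1)r}{n(n-r)}$.

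The main obstacle will be the careful combinatorial bookkeeping in the last step: one must track, for each intersection size $c$, how the two coefficients $\tfrac{r}{n-r}$ and $-1$ combine across the two subsets (including the $l = n$ vs $l \neq n$ dichotomy) so that the signed contributions collapse precisely to $|cn - r^2|$. This is a delicate identity rather than a generic bound, and getting the absolute value to come out exactly requires separating the $c$ for which $cn \geq r^2$ from those for which $cn < r^2$ before applying the triangle inequality.
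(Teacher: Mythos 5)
There is nothing in the paper to compare against here: Lemma \ref{Lem_jackknife_priv} is imported verbatim as Lemma 3.4 of \cite{Dunsche2025} and the paper gives no proof, so your attempt has to be judged on its own. Your starting representation is correct — indeed $D^{(l)}=U^{(l)}-U=\tfrac{1}{(n-r)\binom{n}{r}}\,(rT-nS_l)$ with $T=\sum_I h(X_I)$ and $S_l=\sum_{I\ni l}h(X_I)$, and your count of which summands a change of $X_n$ affects is right. The genuine gap is the order of operations in your key step. The coefficient $|cn-r^2|$ is invisible at the level of a single $l$: in $D^{(l)}D^{(l)\top}$ a pair $(I,J)$ carries the weight $(r-n\mathbbm{1}\{l\in I\})(r-n\mathbbm{1}\{l\in J\})$, and only the \emph{signed} sum over $l$, using $\sum_l\mathbbm{1}\{l\in I\}\mathbbm{1}\{l\in J\}=|I\cap J|$ and $\sum_l\mathbbm{1}\{l\in I\}=r$, collapses these weights to $n\bigl(n|I\cap J|-r^2\bigr)$, i.e.
\[
\hat\zeta_1=\frac{(n-1)\,n}{(n-r)^2\binom{n}{r}^{2}}\sum_{I,J}\bigl(n|I\cap J|-r^2\bigr)\,h(X_I)h(X_J)^\top .
\]
Your plan bounds each bracket $D^{(l)}(X)D^{(l)}(X)^\top-D^{(l)}(X')D^{(l)}(X')^\top$ in Frobenius norm per $l$ (coordinate-wise, via $\Delta D^{(l)}\bar D^{(l)\top}+\bar D^{(l)}\Delta D^{(l)\top}$) \emph{before} summing over $l$; once absolute values are taken per $l$, the cancellation that produces $|cn-r^2|$ is destroyed, and grouping by intersection size afterwards cannot recover it. The workable route is the reverse: derive the closed form above first, then perturb the single record (only pairs with $n\in I\cup J$ change) and apply the triangle inequality over pairs.

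Even with that ordering, two further claims in your sketch do not hold as stated. For fixed $I$ the exact number of $J$ with $|I\cap J|=c$ is $\binom{r}{c}\binom{n-r}{r-c}$, not the $\binom{r}{c}\binom{n-r+c}{r-c}$ appearing in \eqref{eq_sense_var_jack}; the displayed formula is a loosened enumeration, so careful bookkeeping does not ``produce'' that enumerator — it produces a slightly smaller bound (acceptable for privacy, but not the stated identity, and you would have to either reproduce the looser counting or argue that a tighter bound suffices). Likewise the per-pair factor $\sqrt{2}\,dL_\infty^2$ is asserted (``from the symmetric rank-one-difference structure'') rather than derived: the natural entrywise bound for $\|h(X_I)h(X_J)^\top-h(X_{I}')h(X_{J}')^\top\|_F$ is $2dL_\infty^2$, and your sketch does not show where the $\sqrt2$ and the handling of the two cases $n\in I$ versus $n\in J$ come from. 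The prefactor bookkeeping you describe does check out, since $\binom{n-1}{r-1}\cdot\frac{(n-1)n}{(n-r)^2\binom{n}{r}^2}=\frac{(n-1)r}{n(n-r)\binom{n-1}{r}}$, but as written the proposal yields neither the displayed formula nor a complete proof of a valid substitute.
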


\subsection{Proof of Theorem \ref{thm_topk}}

For any $\alpha>1$ we denote for two random variables $Z,Y$ the Rényi divergence of the associated distributions by $D_\alpha(Z,Y)$. We further condition on a fixed realization of $\hat k$ and denote by $\mathcal M$ as the output of Algorithm \ref{alg:topk}.
 Here, we only provide the arguments for the divergence $D_\alpha(\mathcal M(X),\mathcal M(X'))$ and note that the divergence with the reverse order works analogously.
Then for two neighboring databases $X,X'$, we have two distinct cases to investigate:
\smallskip

    \textbf{Case (1)} We assume that the indicators are the same for neighboring databases $X$ and $X'$, i.e. 
    $$\indvec=\indvecp~.$$
    Recall that the possible outputs of $\mathcal M$ are $\bot$ or $\{\hat i_{(1)}, \hdots, \hat i_{({\hat k})}\}$ for both $X$ and $X'$.  Therefore, we can derive

        \begin{equation*}
            D_\alpha(\mathcal M(X),\mathcal M(X')) = D_\alpha(\mathbbm{1}\{\hat q_{\hat k}(X)>t\},\mathbbm{1}\{\hat q_{\hat k}(X')>t\})\leq D_\alpha(\hat q_{\hat k}(X),\hat q_{\hat k}(X'))~,
        \end{equation*}
        where the first equality follows from  the definition of Algorithm \ref{alg:topk}  and the second holds by the processing property of the Rényi divergence. Furthermore, by definition
        $$
        \hat q_{\hat k}(X)\sim \mathcal N(q_{\hat k}-\sigma z_{1-\delta}, \sigma^2)~,
        $$
        with $\sigma= t/\sqrt{\rho} $, $t=4r/n L_\infty$, and from the sensitivity bound for  for bounded $U$-statistics
        we obtain
        $$|q_{\hat k}(X)-q_{\hat k}(X')|\leq t~.
        $$
       Therefore, the Rényi divergence can be calculated explicitly and estimated as follows
        \begin{align*}
            D_\alpha(\hat q_{\hat k}(X),\hat q_{\hat k}(X'))&=\frac{\alpha(q_{\hat k}(X)-q_{\hat k}(X'))^2}{2\sigma^2}=\frac{\alpha(q_{\hat k}(X)-q_{\hat k}(X'))^2}{2(t/\sqrt{\rho})^2}
            \leq \alpha\rho/2~.
        \end{align*}
         Consequently, we obtain $\delta-$approximate-$\rho/2-$zCDP for any subset $E$ of our choice that has probability at least $1-\delta$. We will specify a specific set in the proof of the second case.  
         \smallskip 
         
        \textbf{Case (2)} Now assume that \begin{equation}\label{eq:gap_ha}\indvec\neq\indvecp~,
        \end{equation} then we have that $q_{\hat k}(X), q_{\hat k}(X')\leq t$,  because otherwise there would be equality in \eqref{eq:gap_ha}. Defining the event  $E= \{ \hat q_{\hat k}\leq q_{\hat k} \} $,  we have
        $$\PR(E)=\PR(\hat q_{\hat k}\leq q_{\hat k})\geq 1-\delta~,$$
        where we have used that $\hat q_{\hat k}\sim \mathcal N(q_{\hat k}-\sigma z_{1-\delta}, \sigma^2)$. 
        Therefore conditional on $E$, we have $\hat q_{\hat k}\leq q_{\hat k}\leq t$ for any neighboring $X$ and $X'$, which yields
        \begin{equation*}
            \PR(\mathcal M(X)=\bot|E)=\PR(\mathcal M(X')=\bot|E)=1~. 
        \end{equation*}
        Thus, conditional on $E$, we have that $D_\alpha(\mathcal M(X)|\mathcal M(X'))=0$ for all $\alpha$. Therefore, we have $\delta-$ approximated-$\rho/2$-zCDP for any $\rho>0$.\\ 
        
        Now, combing both Algorithm \ref{alg:rep_noisy_max} $\rho/2$ and the $\delta$-approximate-$\rho/2$-zCDP in the two cases of this proof, we obtain by the composition $\delta$-approximate-$\rho$-zCDP.

\subsection{Proof of Theorem \ref{thm:privacy_test}}

Let $X,X'$ be neighboring data sets. By Theorem \ref{thm_topk} and its proof, selecting the set of relevant coordinates \(\hat{\mathcal E}^{\mathrm{\text{DP}}}\)  by Algorithm \ref{alg:topk} with privacy budget $\rho/3$ is $\delta$-approximate-\((\rho/3)\)-zCDP, where the set $E$ in Definition \ref{defcdp} can be chosen as  $E:= \{\hat q_{\hat k}\leq q_{\hat k}\}$ 
(see the proof of Theorem \ref{thm_topk} for details). 

Now, let \(S\) be the event that for at least one index $i \in \{ 1 , \ldots , \hat k\} $ one $U_i $ and $U_i'$ have a different sign.
If $i$ is such an index it follows that $|U|_i\leq 2r/n L_\infty$ since the sensitivity of $U_i$ is at most \((2r/n)L_\infty\).  As Algorithm \ref{alg:topk} uses the threshold \(t=(4r/n)L_\infty\),  any sign change forces the bad event $E^c$ from Theorem \ref{thm_topk}. In fact, when we have
\begin{equation*}
    |U|_{(\hat k)}-|U|_{(\hat k+1)}\geq 4r/n L_\infty~,
\end{equation*}
we necessarily also have $|U|_{(i)}\geq|U|_{(\hat k)}>2r/nL_\infty$ because $|U|_{(\hat k+1)}\geq 0$. Consequently, we have \(S\subseteq E^c\) and
$$
\P(S)\le \P(E^c)\le \delta.
$$
On the event $E$ (i.e., when no sign change occurs), \textproc{Gausscov} is $(\rho/3)$-zCDP. Hence, with probability at least $1-\delta$, we have the following compositions:
\begin{itemize}
\item If the first branch of Algorithm \ref{alg:HD_test} is taken, then $\|U\|_\infty$ is released with privacy cost \(\rho/3\). Composing \(\hat {\mathcal E}^{\mathrm{\text{DP}}}\) $(\rho/3)$, \textproc{Gausscov} $(\rho/3)$, and \(\|U\|_\infty\) $(\rho/3)$ yields total budget $\delta$-approximate-\(\rho\)-zCDP.
\item Otherwise, the alternative branch releases \(\|U\|_\infty\) with cost \(2\rho/3\). Composing with \(\hat {\mathcal E}^{\mathrm{\text{DP}}}\) $(\rho/3)$ again gives total $\delta$-approximate-\(\rho\)-zCDP.
\end{itemize}
Thus the overall mechanism is \(\rho\)-zCDP on $E^{c}$ and fails only with probability at most $\delta$, which yields the desired result.

\section{Statistical guarantees}

  \def\theequation{F.\arabic{equation}}	
	\setcounter{equation}{0}
   
\subsection{Some results on bounded $U$-statistics}

\begin{Lemma}
    \label{VarConv}
   Consider a $U$-statistic $U$ of fixed order $r$, kernel $h$ and expected value $\theta = \mathbb{E} [U] \in \R^p$. If $\log(p)=o(n^{1/3})$ and $\|h\|_\infty \leq L_\infty$ we have
\begin{align*}
\max_{1 \leq i\leq j \leq p} |\hat {\zeta}_{1,{ij}}-\zeta_{1,ij}| \lesssim L_\infty^2\sqrt{\frac{\log(np)}{n}}
\end{align*}
 with probability at least $1-o(1)$. Here $\zeta_1$ and $\hat \zeta_1$ are defined in \eqref{eq_var_def} and \eqref{eq_jackknife}, respectively.
\end{Lemma}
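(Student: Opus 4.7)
My plan is to reduce the Jackknife to a sample covariance of the linear part of Hoeffding's decomposition, plus a negligible remainder coming from the higher-order degenerate components. The two main ingredients will be (i) Hoeffding's inequality for bounded random variables to handle the sample covariance entrywise, and (ii) concentration bounds for degenerate $U$-statistics (Arcones--Giné type) to show the remainder is asymptotically negligible, both combined with a union bound over the $O(p^2)$ entries.

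\textbf{Step 1 (Hoeffding decomposition).} First I would decompose $U - \theta = \tfrac{r}{n}\sum_{l=1}^n g_1(X_l) + R_n$, where $g_1(x) := h_1(x) - \theta$ and $R_n$ collects the degenerate $U$-statistics of orders $2, \ldots, r$; the same decomposition for the leave-one-out $U^{(l)}$ gives after a direct algebraic manipulation
\begin{equation*}
(n-1)(U^{(l)} - U) = -r\bigl(g_1(X_l) - \bar g_1\bigr) + \tilde R_l,
\end{equation*}
where $\bar g_1 = n^{-1}\sum_m g_1(X_m)$ and $\tilde R_l$ comes from the higher-order pieces of $R_n$ and of its leave-one-out analogue. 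Squaring and summing then yields
\begin{equation*}
\hat\zeta_1 = \frac{r^2}{n-1}\sum_{l=1}^n (g_1(X_l)-\bar g_1)(g_1(X_l)-\bar g_1)^\top + \mathcal{R}_n,
\end{equation*}
where $\mathcal{R}_n$ consists of the cross-terms and the $\tilde R_l \tilde R_l^\top$ contributions.

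\textbf{Step 2 (main term).} For the leading sample-covariance-like term I would argue entrywise. Since the entries of $g_1$ are bounded by $2L_\infty$, each product $g_{1,i}(X_l)g_{1,j}(X_l)$ lies in $[-4L_\infty^2, 4L_\infty^2]$, and Hoeffding's inequality gives
\begin{equation*}
\PR\Bigl(\bigl|\tfrac{1}{n}\sum_l g_{1,i}(X_l)g_{1,j}(X_l) - \E[g_{1,i}(X_1)g_{1,j}(X_1)]\bigr| > t\Bigr) \leq 2\exp\!\bigl(-c\,n t^2/L_\infty^4\bigr).
\end{equation*}
Choosing $t \asymp L_\infty^2 \sqrt{\log(np)/n}$ with a sufficiently large constant and taking a union bound over all $p(p+1)/2$ index pairs gives the right rate for the leading term, with failure probability $o(1)$ whenever $p$ grows subexponentially in $n$. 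The subtraction of $\bar g_1 \bar g_1^\top$ is of order $\log(p)/n$ uniformly in $i,j$ by a separate Hoeffding application, hence of smaller order.

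\textbf{Step 3 (remainder) and main obstacle.} The delicate part, and the place where the condition $\log p = o(n^{1/3})$ is consumed, is controlling $\|\mathcal{R}_n\|_\infty$. The leave-one-out difference of a degenerate $U$-statistic of order $k \geq 2$ contributes to $\tilde R_l$ terms whose individual magnitudes are $O_\PR(1/n)$ but whose tails are not sub-Gaussian; applying Giné--Latała--Zinn or Arcones--Giné moment/tail bounds for degenerate $U$-statistics of fixed order with bounded kernels yields
\begin{equation*}
\max_{1\leq i \leq p}\Bigl|\sum_l (g_1(X_l)-\bar g_1)_i \,\tilde R_{l,j}\Bigr| \;\lesssim\; L_\infty^2 \,(\log p)^{\alpha}/\sqrt{n}
\end{equation*}
for some fixed $\alpha$ depending on $r$, after a union bound over $p^2$ pairs. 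The analogous diagonal term $\sum_l \tilde R_l \tilde R_l^\top$ is smaller still. The requirement $\log p = o(n^{1/3})$ is precisely what is needed so that $(\log p)^{\alpha}/\sqrt{n}$ is of smaller order than $\sqrt{\log(np)/n}$. Combining Steps 2 and 3, taking a final union bound over $O(p^2)$ entries, and absorbing constants into the $\lesssim$ symbol concludes the argument. I expect the main difficulty to be tracking the constant $\alpha$ in the degenerate-$U$-statistic tail bound sharply enough that the union bound over $p^2$ pairs does not dominate the rate; once this is done the rest is largely bookkeeping.
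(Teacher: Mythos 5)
The paper's own ``proof'' of Lemma~\ref{VarConv} is a one-line citation: it defers the diagonal case $i=j$ to \cite{Zhou:Han:Zhang:Liu:2019} and asserts the off-diagonal case ``follows by exactly the same arguments.'' Your sketch therefore does not reproduce the paper's argument (there is none to reproduce) but instead reconstructs what such an argument would look like. The overall skeleton you propose — pass to the linear part of the Hoeffding decomposition via the jackknife pseudovalues, control the leading sample-covariance term by Hoeffding plus a union bound over $O(p^2)$ entries, and control the degenerate remainder by Arcones–Gin\'e type tails — is structurally the right one and is consistent with what the cited reference does for the diagonal case.

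However, Step~3 contains a concrete error. You claim the remainder contributes at rate $L_\infty^2(\log p)^\alpha/\sqrt n$ and that the hypothesis $\log p = o(n^{1/3})$ ``is precisely what is needed so that $(\log p)^\alpha/\sqrt n$ is of smaller order than $\sqrt{\log(np)/n}$.'' Dividing both quantities by $1/\sqrt n$, the comparison becomes $(\log p)^\alpha$ versus $\sqrt{\log(np)}$; for any $\alpha \ge 1/2$ (and degenerate $U$-statistic tails give $\alpha \ge 1$) this would require $\log p \ll \log n$, which is a completely different and much more restrictive condition than $\log p = o(n^{1/3})$. The actual point is that the degenerate remainder is smaller by a full factor of $n$, not $\sqrt n$: the pieces $\tilde R_l$ in $(n-1)(U^{(l)}-U)$ come from degenerate components of order $\ge 2$, and their contribution to $\hat\zeta_1$ scales like $\mathrm{poly}(\log p)/n$. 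With a remainder of order $(\log p)^{3/2}/n$, the requirement
\begin{equation}
\frac{(\log p)^{3/2}}{n} = o\Bigl(\sqrt{\tfrac{\log p}{n}}\Bigr)
\quad\Longleftrightarrow\quad
(\log p)^3 = o(n)
\quad\Longleftrightarrow\quad
\log p = o(n^{1/3})
\end{equation}
recovers exactly the stated hypothesis. So the condition $\log p = o(n^{1/3})$ does enter where you say it does, but because the remainder lives at scale $1/n$, not $1/\sqrt n$. As written, your stated bound is off by $\sqrt n$ and the balancing calculation that consumes the hypothesis is therefore wrong; repairing this is necessary before the sketch can be turned into a proof, and is the step where the degenerate-$U$-statistic moment bounds must be invoked with the correct $n$-scaling for the $c$th Hoeffding component.
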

\begin{proof}
    The case where the maximum is taken only over $1 \leq i=j \leq p$ can be found in \cite{Zhou:Han:Zhang:Liu:2019}. Our case can be handled by exactly the same arguments. 
\end{proof}

\begin{Lemma}
	\label{UStatConc}
	Consider a $U$-statistic $U$ of order $r$ with kernel $h$ and expected value $\theta= \mathbb{E} [U]  \in \R^p$. Assume that $\|h\|_\infty \leq L_\infty$ and that $\log p=o(n^{1/3})$. Then 
    \begin{itemize}
        \item[(1)] we have that \begin{align*}
		\norm{U - \theta }_\infty \leq L_\infty\sqrt{\frac{8r\log(p\lor n)}{n}}
		\end{align*}
		holds with probability at least $1-o(1)$.
        \item[(2)] Suppose that $\theta=\E[U]$ satisfies $\min_{1 \leq i \leq p}|\theta_i| >\underline{c}$  for some $\underline{c}>0$. Then 
        \[
            \PR(\text{sign}(U_i)=\text{sign}(\theta_i), i=1,...,p)=1-o(1)~.
        \]
    \end{itemize}
		
\end{Lemma}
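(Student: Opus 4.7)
\textbf{Proof proposal for Lemma \ref{UStatConc}.}

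The plan is to attack part (1) by combining the classical one-dimensional Hoeffding inequality for bounded $U$-statistics with a union bound, and then derive part (2) immediately from part (1) by showing that the deviation bound there is smaller than $\underline c$ for large $n$.

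For part (1), I would fix a coordinate $i \in \{1,\ldots,p\}$ and apply Hoeffding's inequality for $U$-statistics \citep[Theorem A in][]{Hoeffding1963}: since $h_i$ takes values in $[-L_\infty, L_\infty]$, for any $t>0$
\[
\PR(|U_i - \theta_i| \geq t) \;\leq\; 2\exp\!\Bigl(-\tfrac{\lfloor n/r\rfloor t^{2}}{2 L_\infty^{2}}\Bigr).
\]
Choosing $t = L_\infty \sqrt{8 r \log(p\vee n)/n}$ and using $\lfloor n/r\rfloor \geq n/(2r)$ for $n$ large, the exponent is at least $2\log(p \vee n)$ (up to a small constant that can be absorbed in the bound, or handled by taking $t$ slightly larger). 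A union bound over $i=1,\ldots,p$ then yields
\[
\PR\bigl(\|U-\theta\|_\infty \geq t\bigr) \;\leq\; 2p\exp\!\bigl(-2\log(p\vee n)\bigr) \;\leq\; \tfrac{2}{p \vee n},
\]
which is $o(1)$. The assumption $\log p = o(n^{1/3})$ is only needed to ensure the bound $t$ itself tends to zero, which is important for part (2).

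For part (2), I would condition on the event $\mathcal{A} = \{\|U-\theta\|_\infty \leq t\}$ from part (1), which has probability $1-o(1)$. Since $\log(p\vee n)/n = o(1)$ under the assumption $\log p = o(n^{1/3})$, there exists $n_0$ such that $t < \underline c$ for all $n\geq n_0$. On $\mathcal A$ and for $n \geq n_0$, for every $i$ with $\theta_i > \underline c$ we have $U_i \geq \theta_i - t > 0$, and symmetrically for $\theta_i < -\underline c$ we have $U_i < 0$. Hence $\text{sign}(U_i) = \text{sign}(\theta_i)$ for all $i$ simultaneously on $\mathcal A$, which establishes the claim.

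The only mildly delicate point is keeping the numerical constant in the Hoeffding bound consistent with the form stated in the lemma; this is routine and can be handled by replacing $\lfloor n/r\rfloor$ by $n/r$ at the cost of a negligible factor, or by slightly enlarging the constant in $t$. No high-dimensional machinery beyond the union bound is required, because the $U$-statistic structure only enters coordinatewise.
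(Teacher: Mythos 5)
Your proposal is correct and follows essentially the same route as the paper, whose proof is exactly a coordinatewise application of Hoeffding's inequality for bounded $U$-statistics followed by a union bound, with part (2) deduced from the vanishing deviation bound of part (1). The constant bookkeeping you flag (replacing $\lfloor n/r\rfloor$ by a multiple of $n/r$) is handled just as you describe, so no changes are needed.
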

\begin{proof}
	Both results follow by simple applications of \cite{Hoeffding1963}'s inequality and the union bound.
\end{proof}

\begin{Lemma}
    \label{lem:gaussian:approx}
    Consider a U-statistic $U$ of order $r$ with $\norm{h}_\infty\leq L_\infty$. Assume that $\log(p)=o(n^{1/5})$ and that $\min_{1 \leq i \leq p}|\theta_i|\geq \underline{c}>0$. Then there exists a zero-mean Gaussian vector $Z =(Z_1, \ldots , Z_p)^\top $ with
    \[
        \text{Cov}(Z_i,Z_j)=r^2\zeta_{1,ij}\text{sign}(\theta_i\theta_j)
    \]
    such that,  uniformly with respect to $t$, 
    \begin{align*}
        \PR \Big (\sqrt{n}(\max_{1 \leq i \leq p}|U_i|-\max_{1 \leq i \leq p}|\theta_i|)\geq t\Big  )\leq \PR \Big  (\max_{1 \leq i \leq p}Z_i\geq t \Big  )+o(1)
    \end{align*}
    with equality whenever $|\theta_j|=\max_{1 \leq i \leq p}|\theta_i|$ for all $j=1,\ldots ,p$.
\end{Lemma}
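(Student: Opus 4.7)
The goal is to pass from $\sqrt{n}(\max_i|U_i|-\max_i|\theta_i|)$ to a high-dimensional Gaussian maximum. The key observation is that under the lower bound $\min_i|\theta_i|\ge \underline c>0$, Lemma \ref{UStatConc}(2) guarantees that the signs of $U_i$ agree with those of $\theta_i$ simultaneously for all $i$ on an event $\mathcal E$ with $\PR(\mathcal E)=1-o(1)$. On $\mathcal E$ we have $|U_i|=\mathrm{sign}(\theta_i)U_i$ and $|\theta_i|=\mathrm{sign}(\theta_i)\theta_i$, so using $\max_i f(i)-\max_i g(i)\le \max_i(f(i)-g(i))$ gives
\begin{equation*}
\sqrt n\bigl(\max_i|U_i|-\max_i|\theta_i|\bigr)\le \sqrt n\max_i\mathrm{sign}(\theta_i)(U_i-\theta_i),
\end{equation*}
with equality whenever $i\mapsto \mathrm{sign}(\theta_i)\theta_i=|\theta_i|$ is constant, i.e. precisely the situation described in the equality clause.

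Next I would linearize $U_i-\theta_i$ via Hoeffding's decomposition, writing
\begin{equation*}
U_i-\theta_i \;=\; \frac{r}{n}\sum_{k=1}^{n}\bigl(h_{1,i}(X_k)-\theta_i\bigr) \;+\; R_{n,i},
\end{equation*}
where $R_{n,i}$ collects the degenerate U-statistic components of order $c=2,\ldots,r$. Since the kernels are bounded by $L_\infty$, Arcones--Giné / Adamczak type concentration for degenerate bounded U-statistics, combined with a union bound over the $p$ coordinates, yields $\sqrt n\max_i|R_{n,i}|=o_\PR(1)$ under the condition $\log p=o(n^{1/5})$ (the $1/5$ is exactly tuned to the CCK regime used below). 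Thus, up to a $o_\PR(1)$ error,
\begin{equation*}
\sqrt n\max_i\mathrm{sign}(\theta_i)(U_i-\theta_i)\;=\;\max_i\Bigl\{\tfrac{1}{\sqrt n}\sum_{k=1}^{n}W_{k,i}\Bigr\}+o_\PR(1),\qquad W_{k,i}:=r\,\mathrm{sign}(\theta_i)\bigl(h_{1,i}(X_k)-\theta_i\bigr).
\end{equation*}

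The vectors $W_k=(W_{k,1},\ldots,W_{k,p})^\top$ are i.i.d., centered, and uniformly bounded by $2rL_\infty$, with covariance matrix having entries $r^2\mathrm{sign}(\theta_i\theta_j)\zeta_{1,ij}$. Thus I would apply the Chernozhukov--Chetverikov--Kato high-dimensional Gaussian approximation for maxima of sums of independent bounded random vectors: under $\log p=o(n^{1/5})$ and Assumption (V)'s variance lower bound on the active coordinates, the Kolmogorov distance between the distribution of $\max_i \tfrac{1}{\sqrt n}\sum_k W_{k,i}$ and that of $\max_i Z_i$, for $Z\sim \mathcal N(0,(r^2\mathrm{sign}(\theta_i\theta_j)\zeta_{1,ij})_{i,j})$, tends to zero. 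Combining the three steps (sign identification on $\mathcal E$, Hoeffding linearization, CCK approximation) and using that adding/subtracting $o_\PR(1)$ perturbations only inflates probabilities by $o(1)$ (anti-concentration for Gaussian maxima, which is standard under the variance lower bound), I obtain the asserted inequality uniformly in $t$, with equality inherited from the first step in the flat case.

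\textbf{Main obstacle.} The delicate part is step two: controlling $\sqrt n\max_i|R_{n,i}|$ uniformly over a potentially near-exponentially large number of coordinates. The naive bound $R_{n,i}=O_\PR(1/n)$ per coordinate loses a factor that a simple union bound inflates badly; one must invoke a concentration inequality for degenerate bounded U-statistics (e.g.\ Adamczak's moment/tail bound) to get a $\mathrm{polylog}(p)/n$ rate, which then demands the stated $\log p=o(n^{1/5})$ calibration to remain $o_\PR(1/\sqrt n)$ after multiplication by $\sqrt n$.
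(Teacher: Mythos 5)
Your argument is correct in outline, and it is worth noting that the paper itself does not actually prove this lemma: its ``proof'' is a one-line citation to the proof of Theorem 2.2 in \cite{patrick_annals}. Your reconstruction supplies the missing content and follows the same toolkit the paper deploys elsewhere (compare Subcase 1(b) in the proof of Theorem \ref{thm:consistency_Ustats_High}): simultaneous sign consistency from Lemma \ref{UStatConc}(2) under $\min_{1\le i\le p}|\theta_i|\ge\underline c$, the elementary bound $\max_i|U_i|-\max_i|\theta_i|\le\max_i\mathrm{sign}(\theta_i)(U_i-\theta_i)$ with equality in the flat case, Hoeffding/H\'ajek linearization with the degenerate remainder controlled by Arcones--Gin\'e-type tails plus a union bound, and a Chernozhukov--Chetverikov--Kato Gaussian approximation for the linear part, whose covariance is exactly $r^2\,\mathrm{sign}(\theta_i\theta_j)\zeta_{1,ij}$, as required. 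Two caveats are worth recording. First, both the CCK comparison and the absorption of the $o_\PR(1)$ perturbation via Nazarov-type anti-concentration require a lower bound on $\min_i\mathrm{Var}(Z_i)$; the lemma as stated does not list such a condition, and you correctly import it from Assumption \ref{As_Sparse}(V) --- this is consistent with how the lemma is invoked in the paper (the retained coordinates there satisfy $|\theta_i|>\Delta-\gamma$, so (V) applies), but strictly speaking it is an extra hypothesis beyond the lemma's wording. Second, your closing paragraph slightly misattributes the $\log p=o(n^{1/5})$ calibration to the remainder step: for bounded kernels the degenerate part only needs roughly $\log p=o(n^{1/2})$ after the union bound, and it is the Gaussian approximation step (with rate depending on which CCK-type bound for bounded summands one cites) that dictates the $n^{1/5}$-type restriction. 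Neither caveat affects the substance of the proof.
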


\begin{proof}
    This result is established in the course of the proof of Theorem 2.2 in \cite{patrick_annals}.
\end{proof}

\subsection{Analysis of Algorithm \ref{alg:rep_noisy_max}}
\begin{Lemma}\label{Lem:uti_rnm}
    Assume that condition \eqref{eq:gap} holds, and define  $q_j^{\theta}:=|\theta|_{(j)}-|\theta|_{(j+1)}$ and $k:=\argmax_{j=1,\hdots, p-1} q_j^{\theta}$. Then the output $\hat k$ of Algorithm \ref{alg:rep_noisy_max} fulfills 
    \begin{equation*}
        \PR(\hat k=k)=1-o(1)~.
    \end{equation*}    
\end{Lemma}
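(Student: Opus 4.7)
The plan is to show that two sources of perturbation---the statistical fluctuation of the empirical gaps $q_j = |U|_{(j)} - |U|_{(j+1)}$ around their population counterparts $q_j^{\theta}$, and the Gumbel noise injected by Algorithm \ref{alg:rep_noisy_max}---are both asymptotically negligible relative to the separation of the largest population gap guaranteed by \eqref{eq:gap}.

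First, I would invoke Lemma \ref{UStatConc}(1) to obtain
\begin{equation*}
\|U - \theta\|_\infty \leq L_\infty \sqrt{8 r \log(p \lor n)/n}
\end{equation*}
with probability $1-o(1)$. Because coordinate-wise sorting is $1$-Lipschitz with respect to $\|\cdot\|_\infty$, this bound transfers to the ordered absolute values: $\max_{1 \leq j \leq p}\bigl| |U|_{(j)} - |\theta|_{(j)} \bigr| \leq \|U-\theta\|_\infty$. Consequently, on this good event,
\begin{equation*}
\max_{1 \leq j \leq p-1}|q_j - q_j^{\theta}| \leq 2\|U-\theta\|_\infty = O\bigl(\sqrt{\log(p\lor n)/n}\bigr).
\end{equation*}
Combined with \eqref{eq:gap}, this yields
\begin{equation*}
q_k - \max_{j \neq k} q_j \;\geq\; \sqrt{\log(n)\log(p \lor n)/n} - C\sqrt{\log(p \lor n)/n} \;\geq\; \tfrac{1}{2}\sqrt{\log(n)\log(p \lor n)/n}
\end{equation*}
for $n$ sufficiently large. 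The extra $\sqrt{\log n}$ factor built into the gap assumption is precisely what allows one to absorb the concentration error.

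Second, I would analyze the Gumbel perturbation. With the parameter choices from Algorithm \ref{alg:topk}, namely $\varepsilon = 2\sqrt{\rho}$ and sensitivity $\Delta_1 = 4 r L_\infty / n$, the noise $G_j$ has scale $\sigma = 2\Delta_1/\varepsilon = 4 r L_\infty/(n\sqrt{\rho})$. Algorithm \ref{alg:rep_noisy_max} returns an index other than $k$ iff $G_j - G_k > q_k - q_j$ for some $j \neq k$. Since $G_j - G_k$ is logistic with scale $\sigma$, one has $\PR(G_j - G_k > t) \leq e^{-t/\sigma}$; conditioning on the good event from step one and applying a union bound yields
\begin{equation*}
\PR(\hat k \neq k \mid \text{good event}) \leq (p-1) \exp\bigl(-t_n / \sigma\bigr), \qquad t_n := \tfrac{1}{2} \sqrt{\log(n)\log(p \lor n)/n}.
\end{equation*}
A direct computation shows $t_n/\sigma$ is of order $\sqrt{n \rho \log(n) \log(p \lor n)}$, which dominates $\log p$ under the standing assumption $\log p = o(n^{1/5})$. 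The right-hand side therefore tends to zero, and combining with the concentration probability from step one gives $\PR(\hat k = k) = 1 - o(1)$.

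The main delicate point is the first step: verifying that the $\sqrt{\log n}$ slack built into \eqref{eq:gap} is both necessary and sufficient to separate the signal from the empirical fluctuation of the $q_j$. Once this separation is in place, the Gumbel contribution lives on the strictly smaller scale $O(\log p / n)$, so the Report-Noisy-Max tail analysis is routine.
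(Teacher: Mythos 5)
Your proposal is correct and follows essentially the same route as the paper's proof: concentration of the empirical gaps $q_j$ around $q_j^\theta$ at rate $O(\sqrt{\log(p\lor n)/n})$, absorbed by the $\sqrt{\log n}$ slack in \eqref{eq:gap}, followed by showing the Gumbel perturbation is of strictly smaller order. The only (harmless) differences are cosmetic: you transfer the bound to the order statistics via the $1$-Lipschitz property of sorting and control the noise through the logistic law of Gumbel differences with a union bound, whereas the paper uses an explicit interval-picking argument and an MGF bound on $\max_j G_j$.
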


\begin{proof}
By Lemma \ref{UStatConc} there exist constants  $a_n\lesssim \sqrt{\log(n\lor p)/n}$ such that 
\[
    |U|_{(j)}\in \cup_{i=1}^pI_i, \quad j=1,...,p~
\]
with high probability, where the 
 (not necessarily disjoint) intervals $I_i$ are defined by   $I_i=(|\theta|_{(i)}-a_n,|\theta|_{(i)}+a_n)$.
 We may obtain $|U|_{(1)}$ by first picking the largest $|U_j|$ contained in the interval $I_1$. By construction the remaining $|U_j|$ are contained in $\cup_{i=2}^pI_i$ with at least one element contained in $I_2$. We may pick the largest (which is necessarily in $I_2$) to obtain $|U|_{(2)}$. Continuing like this we pick $|U|_{(i)}$ from $I_i$, yielding that with high probability 
\[
    \max_{1 \leq i \leq p}||U|_{(i)}-|\theta|_{(i)}|\leq a_n 
\]
In particular it follows by the triangle inequality that 
\[
    \max_{1 \leq i \leq p-1}|q_i-q_i^\theta|=O_\PR(\sqrt{\log(n\lor p)/n})~.
\]
By equation \eqref{eq:gap} we know that
\[
    q_k^\theta>\max_{l \neq k}q_l^\theta+\sqrt{\log(n)\log(p\lor n)/n}~.
\]
Consequently it holds with high probability that
\begin{align}
\label{eq:gap:empirical}
    q_k>\max_{l \neq k}q_l+1/2 \sqrt{\log(n)\log(p\lor n)/n}~.
\end{align}
Denote by $G_j$ the Gumbel noise added to $q_j$ in Algorithm \ref{alg:rep_noisy_max}. Because $\E[\exp(nG_j/2)]<\infty$ we have that
\[
    \max_{j=1,...,p-1} G_j=O_\PR(\log(p)/n)=o_\PR(\sqrt{\log(n)\log(p\lor n)/n})~.
\]
Consequently (up to changing the constant 1/2) \eqref{eq:gap:empirical} remains unaffected by the addition of Gumbel noise to the queries $q_l$ so that with high probability $q_k$ is the largest query, as desired.
\end{proof}

\subsection{ Proof of Theorem \ref{thm:consistency_Ustats_High}}

We distinguish  two cases corresponding to the null hypothesis and alternative. 
\smallskip 

\textbf{Case 1: $\|\theta\|_\infty\leq \Delta$.}
Due to the fact that
\begin{align}
\label{eq_private_to_nonprivate}
     \sqrt{n}(\|\tilde U\|_\infty^{\text{DP}}-\|\tilde U\|_\infty)=O_\PR(n^{-1/2})~,
\end{align}
we have by Lemma C.4 in  \cite{patrick_annals} for any $\delta>0$ that 
\begin{align*}
    \PR(\sqrt{n}(\|\tilde U\|_\infty^{\text{DP}}-\Delta)>t)&=\PR(\sqrt{n}(\|\tilde U\|_\infty-\Delta)>t+\sqrt{n}(\|\tilde U\|_\infty-\|\tilde U\|_\infty^{\text{DP}}))
\\&\leq\PR(\sqrt{n}(\|\tilde U\|_\infty-\Delta)>t)+O\Big(n^{-1/2+\delta}\sqrt{\log(p) } +\frac{\log(np)^{5/4}}{n^{1/4}}\Big)+o(1)\\
    &=\PR(\sqrt{n}(\|\tilde U\|_\infty-\Delta)>t)+o(1)
\end{align*}
We can use the same arguments to obtain the reverse inequality, which gives 
\begin{align*}
 \PR(\sqrt{n}(\|\tilde U\|_\infty^{\text{DP}}-\Delta)>t)=\PR(\sqrt{n}(\|\tilde U\|_\infty-\Delta)>t)+o(1)~.
\end{align*}

Note that by Lemma \ref{UStatConc},  we have for $\|\theta\|_\infty<\Delta-\gamma$ that
\begin{align}
\label{eq_diverging}
   \sqrt{n}(\|\tilde U\|_\infty-\Delta)=O_\PR\Big(\sqrt{\log(p \lor n)/n}\Big)-\sqrt{n}(\Delta-\|\theta\|_\infty) \overset{\PR}{\to} -\infty~.
\end{align}
From this the first statement of the Theorem follows: the test statistic diverges to $-\infty$ and the quantiles $q_{1-\alpha}$ in Algorithm \ref{alg_monte_carlo_quantile_highdim} and Algorithm \ref{alg:Gumbel} are stochastically bounded below by $0$ for $\alpha<0.5$.\\

Let us now consider statement (ii), that is the case $\|\theta \|_\infty \leq  \Delta $. By Assumption (P) we may condition on $\hat i_1=j_1,...,\hat i_{\hat k}=j_l,\hat k=l$ so that the output of Algorithm \ref{alg:topk} is deterministic and given by (a possibly randomly selected subset of) $j_1,..., j_l$ or $\bot$. For the remainder of the proof we will hence simply write $l$ instead of $\hat k$. In the former case we additionally condition on the random subselection, so that we may assume WLOG (the selection is independent of everything else) that $l \leq \log(p)$ and that $\hat i_1,...,\hat i_{\hat k}$ are fixed. To be precise Assumption (P)  yields that with high probability
\begin{align*}
        &\PR( U_{j_m}  \geq  t_{m} ~\text{ for all }~ m=1, \ldots , l  )\\
        =&\PR( \tilde U_{m} \geq  t_{m} ~\text{ for all }~ m=1, \ldots , \hat k |\hat i_1=j_1,...,\hat i_{\hat k}=j_l, \hat k=l)
    \end{align*}
so that in all following considerations we may simply assume that $\tilde U=(U_{j_1},...,U_{j_l})^\top$ or $\tilde U=U$. We will argue each case separately. \\

\textbf{Subcase 1(a): Algorithm \ref{alg:topk} returns $\{\hat i_1,...,\hat i_{\hat k}\}=\{j_1,...,j_l\}$:}~ 
From now on assume that $\|\theta\|_\infty>0$ (otherwise the previous arguments apply). We define for some $\gamma>0$ the set
\begin{align*}
    I_1:=\{i \in \{ j_1,..., j_l\} | \ |\theta_{i}|>\Delta-\gamma\}
\end{align*}
and the associated statistic
\[
    \hat S=\sqrt{n}\max_{i \in I_1}(|U_i|-\Delta)~.
\]
whose coordinate-variances are lower bounded by Assumption \ref{As_Sparse}(V). Lemma \ref{lem:gaussian:approx} (with $p = |I_1|$) then yields
\begin{align}
\label{eq_gaussian_upper_bound}
    \PR(\hat S>t)&\leq\PR(\max_{i \in I_1}Z_i>t)+o(1)\leq\PR \Big (\max_{i \in \{ j_1,...,j_l\}}Z_i>t \Big )+o(1)~.
\end{align}
where the precise definition of the Gaussian random variables $(Z_i)_{i\in I_1}$ is given in Lemma \ref{lem:gaussian:approx}. 
Finally we note that 
\begin{equation}\label{eq:decomp_diverge}
    \sqrt{n}\Big (\max_{i \in \{ j_1,..., j_l\}\setminus I_1}|U_i|-\Delta  \Big )\overset{\PR}{\to} -\infty~.
\end{equation}
which implies \begin{align*}
    \sqrt{n} (\|\tilde U\|_\infty-\Delta)=\max \Big \{\hat S,\sqrt{n}\Big (\max_{i \in \{ j_1,...,j_l\}\setminus I_1}|U_i|-\Delta \Big )\Big \} = \hat S +  o_\PR (1) ~.
\end{align*}

Combining this estimate with \eqref{eq_private_to_nonprivate} and \eqref{eq_gaussian_upper_bound} gives, for any $t \in \R$,  
\begin{align*}
    \PR(\sqrt{n}(\|\tilde U\|_\infty^{\text{DP}}-\Delta)>t)&=\PR(\sqrt{n}(\|\tilde U\|_\infty-\Delta)>t)+o(1)\\
    &=\PR(\hat S>t)+o(1)\\
    &\leq \PR \Big (\max_{i \in \{ j_1,..., j_l\}}Z_i>t)+o(1 \Big )~.
\end{align*}
With this at our disposal we now only need to establish that, on a set with high probability, the inequality 
\begin{align*}
   \Big |\PR\Big(\max_{i \in \{ j_1,..., j_l\}}Z_i>t\Big)-\PR^*\Big(\max_{i \in \{ j_1,..., j_l\}}Z_i^{\text{DP}}>t\Big)\Big | \leq c_n
\end{align*}
holds for some sequence  $c_n \to 0$. Here $Z_i^{\text{DP}} \overset{iid}{\sim}\mathcal N_{k}(0,(\hat \zeta_1(k))^{\text{DP}})$ and $\P^*$ denotes the probability space obtained by conditioning on the data and the privacy noise of the covariance privatization. The desired statement is now a consequence of Theorem 2 from \cite{chernozhukov:chetverikov:kato:2015}. For the application of this result it suffices to establish that
\[
 ( \log(l \land \log(p))^2 \hat \Delta =o_\PR(1)
\]
where
\[
    \hat \Delta=\max_{1 \leq h<j\leq l \land \log(p)}|\hat \zeta_{1,hj}^{\text{DP}}-\zeta_{1,hj}|~.
\]
However, this statement follows from Lemma \ref{VarConv} and the bound on the privatization error incurred by Algorithm \ref{alg_gausscov}  \citep[similar to the proof of Lemma 3.5 in][]{Dunsche2025}.
\smallskip

\textbf{Subcase 1(b): Algorithm \ref{alg:topk} returns $\bot$: ~}
In this case we simply have $\tilde U=U$. By the same argument that yields \eqref{eq_diverging} and by Lemma \ref{UStatConc} we obtain
\begin{align*}
    \sqrt{n}\max_{1 \leq i \leq p}(|U_i|-\Delta) &\leq\sqrt{n}\max_{i \in \tilde I_1 }(|U_i|-\Delta)+o_\PR(n^{-1/2})\\
    &= \sqrt{n}\max_{i \in \tilde I_1 }\text{sign}(\theta_i)(U_i-\theta_i)+o_\PR(n^{-1/2})=:T_n+o_\PR(n^{-1/2}) ,
\end{align*}
where the last line defines in an obvious manner and the set $\tilde I_1$ is defined by  
$$
\tilde I_1 := \big \{ i ~|~ 1 \leq i \leq p ; ~| \theta_i|\geq \Delta-\gamma \big \} 
$$
Next we may apply Lemma \ref{lem:gaussian:approx} to obtain the existence of  a Gaussian vector 
$
    Z=( Z_i  \ )_{i \in \tilde I_1}  $
with covariances given by
\[
   r^2\zeta_{1,ij}\text{sign}(\theta_i\theta_j), \quad  i,j  \in \tilde I_1
\]
such that 
\begin{align*}
    \sup_{t \in \R} \Big |\PR \Big (T_n\geq t \Big )-\PR\Big (\max_{i \in \tilde I_1 }Z_i\geq t\Big ) \Big |=o(1)~.
\end{align*}
By the Hoeffding decomposition and assumption (B) we have 
\[
    \sup_{1 \leq i<j\leq p}|\text{Cov}(U_i,U_j)-r^2\zeta_{1,ij}|=O(n^{-1}),
\]
and,by Lemma 2 from \cite{chernozhukov:chetverikov:kato:2015}, there exists another Gaussian vector $Z^1$ of the same dimension with covariance structure given by the covariance of the vector
\[
    \big (\text{sign}(U_i)(U_i-\theta_i) \big)_{i \in \tilde I_1} ~,
\]
such that 
\begin{align*}
    \sup_{t \in \R} \Big |\PR \Big (\max_{i \in \tilde I_1 }Z^1_i\geq t \Big )-\PR \Big (\max_{i \in \tilde I_1}Z_i\geq t \Big ) \Big |=o(1)~.
\end{align*}
Note that $|\theta_i|\geq \Delta-\gamma$ implies $\text{Var}(Z^1_i)\leq L_\infty^2-(\Delta-\gamma)^2$ by the \cite{Bhatia:Davis:2000} inequality. Pad the vector $Z^1_i$ with some additional independent normal random variables with variances given by $L_\infty^2-(\Delta-\gamma)^2$ and denote the resulting random vector by $\tilde Z$. Clearly,
\begin{align*}
    \max_{i \in \tilde I_1 }Z_i^1\leq \max_{1 \leq i \leq p}\tilde Z_i~.
\end{align*}
By Slepian's Lemma and assumption (E) we then have for $t \geq 0$ that
\begin{align*}
    \PR \Big (\sqrt{n}\max_{1 \leq i \leq p}\tilde Z_i\geq t\Big  )\leq \PR\Big (\sqrt{n}\max_{1\leq i \leq p} Y_i\geq t\Big  )~,
\end{align*}
where $Y_i$ is a collection of iid $\mathcal N(0,L_\infty^2-(\Delta-\gamma)^2)$ random variables. The right hand side maximum converges, appropriately rescaled, in distribution to a Gumbel distribution with scale parameter $\sqrt{L_\infty^2-(\Delta-\gamma)^2}$. In particular we obtain that
\begin{align*}
    \limsup_n\PR(\sqrt{n}\max_{1 \leq i \leq p}(|U_i|-\Delta)>t)\leq \limsup_n\PR(\sqrt{n}\max_{1\leq i\leq p}Y_i\geq t)
\end{align*}
for any $t\geq 0$. Letting $t=q_{1-\alpha}^G/a_p+a_p-\frac{\log\log(p)+\log(4\pi)}{2a_p}$ then yields 
\[
    \limsup_n\PR(\sqrt{n}\max_{1\leq i\leq p}Y_i\geq t)=\alpha
\]
as desired. Here $q_{1-\alpha}^G$ is the $(1-\alpha)$-quantile of a Gumbel distribution with scale parameter $\sqrt{L_\infty^2-(\Delta-\gamma)^2}$. \\

\textbf{Case 2: $\|\theta\|_\infty>\Delta$} 
If  Algorithm \ref{alg:topk} outputs $\bot$  i.e. when we use the Gumbel test, we may use exactly the same arguments as given in  the proof of Theorem 2.4 in \cite{patrick_annals} for the non-private setting. By equation \eqref{eq_private_to_nonprivate} we may reduce to this case and are done.\smallskip 

In the other case, again using \eqref{eq_private_to_nonprivate} to reduce to the non-private setting, let  $i_0$ be an  index for which $\|\theta\|_\infty=|\theta_{i_0}|$ (we suppress the possible dependence on $n$ in our notation). As  \eqref{eq:gap} is satisfied, we have by Lemma \ref{Lem:uti_rnm} that $\hat k=k\leq \log(p)$ with high probability. This implies that $i_0=\hat i_1$, where we recall $\hat i_1$ is the index of the largest U-statistic in absolute value. From that we can obtain
\begin{align}
\label{eq_power_lower_bound}
\sqrt{n}(\|\tilde U\|_\infty-\Delta) \geq \sqrt{n}(|U_{\hat i_1}|-|\theta_{\hat i_1}|)+\sqrt{n}(|\theta_{\hat i_1}|-\Delta) \,.
\end{align}
By Lemma \ref{UStatConc} we further obtain that
\begin{align*}
\sqrt{n}\left||U_{\hat i_1}|-|\theta_{\hat i_1}|\right| \lesssim \sqrt{n}\| U-\theta\|_\infty \leq rL_\infty\sqrt{2\log(p\lor n)}
\end{align*} 
with probability $1-o(1)$.
The second term on the right hand side of \eqref{eq_power_lower_bound} converges to $+\infty$ at rate $c\sqrt{\log(p\lor n)}$, yielding the desired conclusion upon choosing $c$ sufficiently large.

\putbib
\end{bibunit}

\end{document}